\newtheorem{proposition}{Proposition}[section]
\newtheorem{theorem}[proposition]{Theorem}
\newtheorem{lemma}[proposition]{Lemma}
\theoremstyle{definition}
\newtheorem{definition}[proposition]{Definition}
\newtheorem{remark}[proposition]{Remark}
\numberwithin{equation}{section}
\def\u{{\boldsymbol{u}}}
\def\v{{\boldsymbol{v}}}
\def\n{{\boldsymbol{n}}}
\def\HD{{\boldsymbol{V}}}
\def\e{{\rm e}}
\def\eps{\varepsilon}
\def\d{{\rm d}}
\def\ddt{\frac{\d}{\d t}}
\def\R {\mathbb{R}}
\def\N {\mathbb{N}}
\def\H {{\rm H}}
\def\C {{\mathcal C}}
\def\S {{\mathcal S}}
\def\Q {{\mathcal Q}}
\def \l {\langle}
\def \r {\rangle}
\def \and{\qquad\text{and}\qquad}
\def \x {\boldsymbol{x}}
\def \au {\rm}
\def \ti {\it}
\def \jou {\rm}
\def \no#1#2#3 {{\bf #1} (#3), #2.}
\def \eds#1#2#3 {#1, #2, #3.}
\begin{document}

\title[]
{On the Cahn-Hilliard-Brinkman system}
\author[S. Bosia, M. Conti, M. Grasselli]
{Stefano Bosia, Monica Conti, Maurizio Grasselli}
\address{Politecnico di Milano - Dipartimento di Matematica
\newline\indent
Via E. Bonardi 9,  20133 Milano, Italy}
\email{stefano.bosia@polimi.it}
\email{monica.conti@polimi.it}
\email{maurizio.grasselli@polimi.it}

\begin{abstract}
We consider a diffuse interface model for phase separation of an isothermal incompressible
binary fluid in a Brinkman porous medium. The coupled  system consists of a convective
Cahn-Hilliard equation for the phase field $\phi$, i.e., the difference of the (relative) concentrations of the
two phases, coupled with a modified Darcy equation proposed by H.C. Brinkman in 1947
for the fluid velocity $\u$.
This equation incorporates a diffuse interface surface force
proportional to $\phi\nabla\mu$, where  $\mu$ is the so-called chemical potential.
We  analyze the well-posedness of the resulting Cahn-Hilliard-Brinkman (CHB) system for $(\phi,\u)$. Then we establish the existence of
a global attractor and the convergence of a given (weak) solution to a single equilibrium via {\L}ojasiewicz-Simon inequality.
Furthermore,  we study the behavior of the solutions
as the viscosity goes to zero, that is, when the CHB system approaches the Cahn-Hilliard-Hele-Shaw (CHHS) system.
We first prove the existence of a weak solution to the  CHHS system as limit of CHB solutions. Then, in dimension two, we estimate the difference
of the solutions to CHB and CHHS systems in terms of the viscosity constant appearing in CHB.

\end{abstract}

\maketitle

\noindent \textbf{Keywords}: Incompressible binary fluids, Brinkman equation, Darcy's law, diffuse interface models,
Cahn-Hilliard equation, weak solutions, existence, uniqueness, global attractor, convergence to equilibrium, vanishing viscosity.
\\
\\
\textbf{MSC 2010}: 35B40, 35D30, 35Q35, 37L30, 76D27, 76D45, 76S05, 76T99.

%%%%%%%%%%%%%%%%%%%%%%%%%%%%%%%%%%%%%%%%%%%%%%%%%%%%%%%%%%%%%%%%%%%

\section{Introduction}

\noindent
The so-called Brinkman equation was proposed by H.C. Brinkman in \cite{Br} as a modified Darcy's law
in order to describe the flow through a porous mass. If we assume that the incompressible
fluid occupies a bounded domain $\Omega\subset\R^d$, $d=2,3$, for any time $t\in (0,T)$, $T>0$,
 the Brinkman equation for the (divergence free)
fluid velocity $\u$ reads
$$
-\nabla \cdot [\nu D(\u)]+\eta \u=-\nabla p,
$$
in $\Omega\times (0,T)$.
Here $2D(\u) =  \nabla \u+(\nabla \u)^{tr}$, $\nu>0$ is the viscosity, $\eta>0$ the fluid permeability
and $p$ is the fluid pressure.

More recently, a diffuse interface variant of Brinkman equation has been proposed to model phase separation of incompressible binary fluids
in a porous medium (see \cite{NYT}). Let us suppose that both the fluids have equal constant density and indicate by
$\phi$ the difference of the fluid (relative) concentrations. Denoting by $\u$  the (averaged) fluid velocity, the resulting model is the following
\begin{align}
\label{B1}
&\partial_t \phi+\nabla\cdot (\phi\u)=\nabla\cdot (M\nabla \mu),\\
\label{Bmu}
&\mu=-\eps \Delta \phi +\frac1\eps f(\phi),\\
\label{B2}
&-\nabla \cdot [\nu D(\u)]+\eta \u=-\nabla p-\gamma \phi \nabla \mu,\\
\label{Bdiv}
&\nabla \cdot \u=0,
\end{align}
in $\Omega\times(0,T)$. Here $M>0$ stands for the mobility, $\eps>0$ is related to the diffuse interface thickness,
$f$ is the derivative of a double well potential describing phase separation, and $\gamma>0$ is a surface tension parameter.

This model consists of a convective Cahn-Hilliard equation \eqref{B1}-\eqref{Bmu} coupled with the Brinkman equation
through the surface tension force $\gamma \phi \nabla \mu$. For this reason \eqref{B1}-\eqref{Bdiv} has been called
Cahn-Hilliard-Brinkman (CHB) system. Such a system belongs to a class of diffuse interface models
which are used to describe the behavior of multi-phase fluids. We recall, in particular, the Cahn-Hilliard-Navier-Stokes
system which has been investigated in several papers
(see, e.g., \cite{Abels2009, Abels2-2009, Boyer1999,CaoGal2012,GalGra2010,GalGra2-2010,GalGra2011,LiuShen2003,Starovoitov1997,
ZhaoWuHuang2009, ZhouFan2013},
cf. also \cite{Kim2012} for a recent review on modeling and numerics).

CHB system has recently been analyzed from the numerical viewpoint in \cite{CSW} (see also \cite{DFW}). More precisely,
the authors have considered system \eqref{B1}-\eqref{Bdiv} with $M$, $\nu$ and $\eta$ possibly depending on $\phi$ and
endowed with the boundary and initial conditions
\begin{align}
\label{BCu}
&\u|_{\partial\Omega}=\mathbf{0}, \quad\text{ on }\partial\Omega\times(0,T),
\\\label{BCfi}
&\partial_\n \phi=\partial_\n\mu=0, \quad\text{ on }\partial\Omega\times(0,T),
\\\label{IC}
&\phi(0)=\phi_0,
\end{align}
where $\phi_0:\Omega\to\R$ is a given function. Here $\n$ stands for the outward normal vector to $\partial\Omega$
which is supposed to be smooth enough.

The main goal of this contribution is to establish some theoretical results
on \eqref{B1}-\eqref{IC}, in the case when $M$, $\nu$ and $\eta$ are constant.
First of all we analyze the well-posedness of the problem, proving  the global existence and uniqueness
of a weak solution and its continuous dependence on the initial datum.
Secondly, we study  the longterm behavior of the CHB system as a dissipative dynamical system by proving the existence of
a global attractor. Then we investigate the long-time dynamics of any given weak solution by showing
that each trajectory  does converge to a unique stationary state, with an explicit convergence rate.
Our results includes the case $\eta=0$ (see \cite{Schmuck2013} and references therein).

In the second part of the paper we analyze the behavior of solutions when $\nu$ goes to zero. Observe that when $\nu=0$ system \eqref{B1}-\eqref{Bdiv}
becomes the so-called Cahn-Hilliard-Hele-Shaw (CHHS) model. This is a particularly challenging problem which
finds applications in tumor growth dynamics (see, e.g., \cite{LTZ} and its references) and has been recently studied from the theoretical viewpoint in \cite{LTZ,WW,WZ} (see also \cite{FW,LLG1,LLG2} and references therein).

We are able to prove that there is a global weak solution to CHHS system which is the limit of
solutions to CHB system with \eqref{BCu}-\eqref{IC} (compare with \cite[Thm.2.4]{FW}).
Notice that uniqueness of weak solutions is still an open problem. On the contrary, a \emph{strong} solution is unique,
but, if $d=3$, only local existence is known so far unless the initial datum is a small perturbation of
a suitable constant state (see \cite{LTZ}).

In dimension two, we also provide an estimate of the difference of (strong) solutions to CHB and CHHS systems with respect to $\nu$.

The plan of this paper goes as follows. In the next section we state the main results along with some notation and basic tools. Section~\ref{S:basic_estimates}
is devoted to prove certain a priori estimates. Then, in Section~\ref{wpCHB}, we establish the well-posedness of problem \eqref{B1}-\eqref{IC}
and a global dissipative estimate. In Section~\ref{S:Higher_order} we obtain some higher-order estimates  which are helpful to prove the existence of
the global attractor as well as to show, in Section~\ref{convequil}, the convergence to the equilibrium
of a given weak solution. Finally, in Section~\ref{S:nu_to_0}, we analyze what happens when $\nu$ goes to zero,
 while in Section~\ref{CHB2D} we estimate the difference of (strong) solutions to CHB and CHHS systems.

\section{Preliminaries and main results}
\label{Sec2}

Here we list our assumptions on $f$ and the potential $F(s):=\int_0^s f(y)\,\d y$ and we introduce some notation.
Then we state our main results. This requires to formulate our problems rigorously.
We also recall a pair of Gronwall-type lemmas.

\subsection*{Assumptions on $F$ and $f$}
We assume that $f\in \C^1(\R)$, with $f(0)=0$, is such that
\begin{equation}
\label{GROW}
|f(s)|\leq c(1+|s|^3),
\end{equation}
and
\begin{align}
\label{DISS1}
F(s)\geq -c,
\end{align}
for all $s\in\mathbb{R}$ and some $c>0$.
In the course of the investigation we
shall need further assumptions such as
\begin{equation}
\label{GROW2}
|f'(s)-f'(t)|\leq c|s-t|(1+|s|+|t|),
\end{equation}
or the stronger condition $f\in \C^2(\R)$ such that
\begin{equation}
\label{GROW3}
|f''(s)|\leq c(1+|s|).
\end{equation}
We shall also make use of the following
dissipation condition
\begin{equation}
\label{DISS0}
\inf_{s\in\R} f'(s)>-\infty.
\end{equation}

\medskip
\noindent
A typical example of (regular) double well potential is
\begin{equation}
\label{doublewell}
F(s)=(s^2-1)^2,
\end{equation}
which complies with \eqref{GROW}-\eqref{DISS0}. More generally, one can take
a fourth degree polynomial with positive leading coefficient.

\subsection*{Functional spaces}
Let $\Omega\subset\R^d$, $d=2,3$, be either a smooth bounded connected domain or a convex polygonal or polyhedral domain.
For any positive integer $r$, let
$H^r(\Omega)=W^{r,2}(\Omega)$,
 the usual Sobolev space, and  denote the norm $\|\cdot\|_{W^{r,2}(\Omega)}$ by $\|\cdot\|_r$.
Throughout the paper, we set $\H=L^2(\Omega)$,
$$ V=\overline{\{\phi\in \v \in \C^{\infty}(\overline{\Omega})\,:\,\partial_\n \phi=0 \text{ on }\partial\Omega\}}^{H^1(\Omega)}\quad\text{and}\quad    \H^r=H^r(\Omega)\cap V, $$
endowed with the norm $\|\cdot\|_r$.
Similarly, we denote the norm $\|\cdot\|_{L^2}$ by $\|\cdot\|$.
The shorthand $\l\cdot,\cdot\r$ will  stand both for the scalar product in $\H$ and for the duality product between
$\H^r$ and its dual space $\H^{-r}$.
 The same symbols will also be used for the scalar product and norm in spaces of vector-valued elements.

\noindent Besides, let $\mathcal{V}$ be the space of divergence-free test functions defined by
\begin{equation*}
    \mathcal{V} = \{ \v \in \C^{\infty}_{0}(\Omega, \mathbb{R}^{3}) \, : \, \nabla \cdot \v = 0 \}.
\end{equation*}
We shall use the following spaces
$$
    {\boldsymbol H}    = \overline{\mathcal{V}}^{(\H)^{3}} \quad\text{ and }\quad
    \HD                = \overline{\mathcal{V}}^{(\H^1)^{3}}.
$$
In particular we recall that if $\v \in \HD$ then $\v |_{\partial \Omega} = \mathbf{0}$ and if $\v \in {\boldsymbol H}$ then $\v \cdot \n = \mathbf{0}$ on $\partial \Omega$ (see, e.g., \cite[Chapter~I]{Temam2001}).

\subsection*{Notation}
Without loss of generality we will set $M=\eps=\gamma = 1$.
Throughout the paper, $c\geq 0$ will stand for a generic constant and
$\Q(\cdot)$ for a generic positive increasing function.
%%%%%%%%%%%%%%%%%%%%%%%%%%%%%%

%%%%%%%%%%%%%%%%%%%%%%%%%%%%%%
\subsection{Statement of the main results}
Let us introduce the definition of weak solution to the CHB system with boundary and initial conditions \eqref{BCu}-\eqref{IC}.
\begin{definition}
\label{d-solution}
Let $\nu>0$, $\phi_0\in \H^1$ and $T>0$ be given.
A pair $(\phi,\u)$ is a (weak) solution to system \eqref{B1}-\eqref{Bdiv} endowed with \eqref{BCu}-\eqref{IC} if
$$\phi\in \C([0,T], \H^1)\cap L^2(0,T;\H^3) $$
satisfies
\begin{align}
\label{B1-w}
&\l \partial_t\phi(t), w\r+\l \nabla\cdot(\phi(t)\u(t)),w\r + \l \nabla\mu(t),\nabla w\r =0,\quad \forall w\in \H^1, \quad \text{a.e.\ $t \in [0,T]$},\\\nonumber
&\partial_\n \phi=0, \quad\text{ a.e. on }\partial\Omega\times(0,T),\\\nonumber
&\phi|_{t=0}=\phi_0, \quad\text{ a.e. in }\Omega,
\end{align}
with $\mu\in L^2(0,T;\H^1)$ given by \eqref{Bmu}
and
$$\u\in L^2(0,T;\HD)$$
fulfills
\begin{equation}
\label{B2-w}
\nu\l \nabla\u(t),\nabla \v\r+\eta \l \u(t),\v\r =-\l \phi(t)\nabla \mu(t), \v\r,\quad \forall\v\in \HD, \quad \text{a.e.\ $t \in [0,T]$}.
\end{equation}
\end{definition}

\smallskip

\begin{remark} It is straightforward to observe that any weak solution satisfies mass conservation, namely,
\begin{equation}
\label{masscons}
\l\phi(t)\r=\l\phi_0\r,\quad \forall t\geq 0,
\end{equation}
where
$$\l\phi(t)\r:=\frac{1}{|\Omega|}\int_{\Omega} \phi(\x,t)\,\d \x.$$
\end{remark}

\begin{remark}
    As we shall see in Section~\ref{S:basic_estimates}, the regularity assumed in Definition~\ref{d-solution} yields
            $$\nabla\cdot (\phi\u)\in L^{2}(0,T;\H^{-1}),$$
            so that $\partial_{t} \phi \in L^{2}(0,T;\H^{-1})$ by comparison.
            Besides, we have
            $$\phi\nabla\mu\in L^{8/5}(0,T;{\boldsymbol H}) \cap L^{2}(0,T;\HD^{\ast}).$$
\end{remark}

\begin{remark}
\label{pressure}
    As usual the pressure term is dropped in the weak formulation of the Stokes problem.
    Indeed, the pressure can be recovered (up to a constant) thanks to a classical result (see, for instance, \cite[Theorem~I.1.4]{Temam2001}).
In particular,  since
$$ \S= \nu \Delta\u-\eta\u+\phi\nabla \mu\in L^{2}(0,T;\HD^{\ast}),$$
we know that there exists a (unique up to an additive function of $t$ only) function
$p \in L^{2}(0,T;\H)$ satisfying $\nabla p=\S$.
\end{remark}

Global existence and uniqueness of a weak solution is given by

\begin{theorem}
\label{t:wellposed}
Let $\nu>0$, $\eta\geq 0$ and $f$ satisfy \eqref{GROW}-\eqref{DISS1}.
Let $\phi_0\in \H^1$ be given. Then, for every $T>0$, there exists a pair
$(\phi,\u)$ which is a solution to the CHB system according to Definition \ref{d-solution}.
If \eqref{GROW2} holds, then the weak solution is unique.
\end{theorem}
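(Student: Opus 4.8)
The plan is to prove existence by a Faedo--Galerkin scheme and uniqueness by a difference estimate in a negative-order norm. For existence I would fix an orthonormal basis $\{w_j\}$ of $\H$ consisting of eigenfunctions of $-\Delta$ with homogeneous Neumann conditions (so that $\{w_j\}\subset\H^1$ is also orthogonal in $\H^1$) and look for $\phi_n(t)=\sum_{j=1}^n a_j^n(t)\,w_j$. Given $\phi_n$, I set $\mu_n=-\Delta\phi_n+f(\phi_n)$ and let $\u_n\in\HD$ be the unique solution, by Lax--Milgram, of \eqref{B2-w} with datum $-\phi_n\nabla\mu_n$; the bilinear form $\nu\l\nabla\cdot,\nabla\cdot\r+\eta\l\cdot,\cdot\r$ is coercive on $\HD$ whenever $\nu>0$, thanks to the Poincar\'e inequality on $\HD$, so this works also for $\eta=0$, and $\u_n$ depends smoothly on the coefficients $a^n$. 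Projecting \eqref{B1-w} onto $\mathrm{span}\{w_1,\dots,w_n\}$ yields a system of ODEs whose right-hand side is locally Lipschitz in $a^n$ (here $f\in\C^1$ is used), so a local solution exists and is unique by the Cauchy theory.

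The crucial a priori estimate comes from testing the phase equation with $\mu_n$ and the Brinkman equation with $\u_n$: the two convective contributions $\pm\l\phi_n\u_n,\nabla\mu_n\r$ cancel exactly, leaving the dissipation identity
\[
\ddt\left(\tfrac12\|\nabla\phi_n\|^2+\int_\Omega F(\phi_n)\,\d\x\right)+\|\nabla\mu_n\|^2+\nu\|\nabla\u_n\|^2+\eta\|\u_n\|^2=0 .
\]
With \eqref{DISS1} and mass conservation \eqref{masscons} this bounds $\phi_n$ in $L^\infty(0,T;\H^1)$, $\nabla\mu_n$ in $L^2(0,T;\H)$ and $\u_n$ in $L^2(0,T;\HD)$, uniformly in $n$; controlling $\l\mu_n\r$ by pairing $\mu_n=-\Delta\phi_n+f(\phi_n)$ with $\phi_n-\l\phi_n\r$ and using \eqref{GROW} upgrades this to $\mu_n\in L^2(0,T;\H^1)$. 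A bootstrap of elliptic regularity for $-\Delta\phi_n=\mu_n-f(\phi_n)$ (as carried out in Section~\ref{S:basic_estimates}) then yields $\phi_n\in L^2(0,T;\H^3)$ and, by comparison, $\partial_t\phi_n\in L^2(0,T;\H^{-1})$; in particular the approximate solution is global on $[0,T]$.

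These uniform bounds let me extract a subsequence with $\phi_n\rightharpoonup\phi$ weakly in $L^2(0,T;\H^3)$ and weakly-$*$ in $L^\infty(0,T;\H^1)$, and $\u_n\rightharpoonup\u$ weakly in $L^2(0,T;\HD)$. Since $\partial_t\phi_n$ is bounded in $L^2(0,T;\H^{-1})$, the Aubin--Lions--Simon lemma gives $\phi_n\to\phi$ strongly in $L^2(0,T;\H^2)$ and a.e. in $\Omega\times(0,T)$; the limit regularity $\phi\in\C([0,T];\H^1)$ follows from $\phi\in L^2(0,T;\H^3)$ and $\partial_t\phi\in L^2(0,T;\H^{-1})$ by the Lions--Magenes interpolation theorem, which also makes sense of $\phi(0)=\phi_0$. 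Strong convergence handles the nonlinearities: $f(\phi_n)\to f(\phi)$ by \eqref{GROW} and dominated convergence, while the products $\phi_n\u_n$ and $\phi_n\nabla\mu_n$ pass to the limit; I then verify that $(\phi,\u)$ satisfies \eqref{B1-w}--\eqref{B2-w} and the boundary and initial conditions.

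For uniqueness under \eqref{GROW2}, take two solutions and set $\phi=\phi_1-\phi_2$, $\u=\u_1-\u_2$, $\mu=\mu_1-\mu_2$, noting $\l\phi\r\equiv0$ by \eqref{masscons}. Testing the difference of \eqref{B1-w} with $\mathcal{N}\phi$, where $\mathcal{N}$ is the inverse of $-\Delta$ with Neumann conditions on mean-zero functions, gives
\[
\tfrac12\ddt\|\phi\|_{*}^2+\|\nabla\phi\|^2=-\l f(\phi_1)-f(\phi_2),\phi\r-\l\phi\,\u_1+\phi_2\,\u,\nabla\mathcal{N}\phi\r,
\]
with $\|\cdot\|_{*}^2=\l\cdot,\mathcal{N}\cdot\r$. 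The potential term is controlled by \eqref{GROW2} (which yields $|f'(s)|\le c(1+s^2)$), Sobolev embedding and the interpolation $\|\phi\|^2\le\delta\|\nabla\phi\|^2+C_\delta\|\phi\|_{*}^2$. The delicate point, and the main obstacle, is the convective term containing the velocity difference $\u$, because the Brinkman source for $\u$ a priori involves $\nabla\mu$, i.e. third derivatives of $\phi$. I would circumvent this through the Korteweg-stress identity $\l\phi_i\nabla\mu_i,\v\r=-\l\nabla\phi_i\otimes\nabla\phi_i,\nabla\v\r$ for $\v\in\HD$ (using $\nabla\cdot\v=0$ and $f(\phi_i)\nabla\phi_i=\nabla F(\phi_i)$), which reduces the velocity estimate to first derivatives of $\phi$ and, exploiting $\phi_i\in L^2(0,T;\H^3)\hookrightarrow L^2(0,T;W^{1,\infty})$, yields $\|\nabla\u\|\le g(t)\|\nabla\phi\|$ with $g\in L^2(0,T)$. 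Feeding this back, every remaining term is absorbed into $\|\nabla\phi\|^2$ and $\nu\|\nabla\u\|^2$ up to a factor $h(t)\|\phi\|_{*}^2$ with $h\in L^1(0,T)$, so Gronwall's lemma together with $\phi(0)=0$ forces $\phi\equiv0$, whence $\u\equiv0$.
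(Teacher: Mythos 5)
Your proof is correct in substance, and the existence part follows essentially the same path as the paper (which only sketches the Galerkin argument and refers to the literature): semi-Galerkin for $\phi$ with the velocity recovered by Lax--Milgram, the energy identity \eqref{s-base} from the exact cancellation of the convective and Korteweg contributions, the elliptic bootstrap to $L^2(0,T;\H^3)$, and compactness. One small slip there: pairing $\mu_n=-\Delta\phi_n+f(\phi_n)$ with $\phi_n-\l\phi_n\r$ does \emph{not} control $\l\mu_n\r$, since $\l\mu_n,\phi_n-\l\phi_n\r\r=\l\mu_n-\l\mu_n\r,\phi_n-\l\phi_n\r\r$; the paper simply pairs with the constant $1$ and uses \eqref{GROW} to get $|\l\mu_n\r|\leq c(1+\|\phi_n\|_1^3)$, which is the fix you should make.

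Your uniqueness argument, however, is genuinely different from the paper's, and it works. The paper tests the difference of the phase equation with $-\Delta\bar\phi$ and the difference of the Brinkman equation with $\bar\u$; the dangerous term $\l\phi_1\bar\u,\nabla\Delta\bar\phi\r$ (third derivatives of $\bar\phi$ hitting the velocity difference) then cancels \emph{exactly} against $-\l\phi_1\nabla\bar\mu,\bar\u\r$, and the remaining terms are absorbed using the dissipation $\nu\|\nabla\bar\u\|^2+\eta\|\bar\u\|^2$, at the price of Gronwall factors of order $\e^{C_T/\sqrt{\nu}}$. You instead work at the level of $\|\bar\phi\|_{*}$ and defuse the third-derivative problem \emph{before} it appears, by rewriting the Korteweg force as $\l\phi_i\nabla\mu_i,\v\r=-\l\nabla\phi_i\otimes\nabla\phi_i,\nabla\v\r$ for $\v\in\HD$ (this identity is correct: the terms $\nabla(\phi_i\mu_i)$, $\nabla F(\phi_i)$ and $\nabla|\nabla\phi_i|^2$ all vanish against divergence-free fields vanishing on $\partial\Omega$), so that $\nu\|\nabla\bar\u\|^2\leq(\|\nabla\phi_1\|_{L^\infty}+\|\nabla\phi_2\|_{L^\infty})\|\nabla\bar\phi\|\,\|\nabla\bar\u\|$ and $\|\nabla\bar\u\|\leq g(t)\|\nabla\bar\phi\|$ with $g\in L^2(0,T)$ via $\H^3\hookrightarrow W^{1,\infty}$. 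Feeding this back into the $\|\cdot\|_{*}$ estimate and using the interpolation $\|\bar\phi\|^2\leq\delta\|\nabla\bar\phi\|^2+C_\delta\|\bar\phi\|_{*}^2$ closes the Gronwall argument. What each approach buys: yours is lower-order and arguably more robust (the velocity is controlled by only first derivatives of the order parameter), but it yields continuous dependence only in the dual norm $\|\cdot\|_{*}$; the paper's choice of test functions is precisely what produces the $\H^1$ continuous dependence estimates \eqref{dipcont}--\eqref{dipcont2} of Theorem~\ref{t:dipco}, which are then used to define a strongly continuous semigroup on $V_I$ and to build the attractor theory. For the statement of Theorem~\ref{t:wellposed} alone (existence and uniqueness) your weaker norm suffices.
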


We also have  continuous dependence estimates.

\begin{theorem}
\label{t:dipco}
Let  $\nu>0$, $\eta>0$. Under the same assumptions of Theorem \ref{t:wellposed},
if $(\phi_1,\u_1)$ and $(\phi_2,\u_2)$ are two weak solutions to the CHB system such that
$\l\phi_1(0)\r=\l\phi_2(0)\r$,
then, for every $T>0$, there exists $C_T>0$ depending on $R=\max\{\|\phi_1(0)\|_1,\|\phi_2(0)\|_1\}$
such that the following continuous dependence estimates hold
\begin{equation}
\label{dipcont}
\|\phi_1(t)-\phi_2(t)\|_1^2\leq \|\phi_1(0)-\phi_2(0)\|_1^2 \e^{C_T/\sqrt{\nu}},
\end{equation}
and
\begin{equation}
\label{dipcont2}
\int_{0}^{t} \| \u_1(t)-\u_2(t) \|_{1}^{2} \leq \|\phi_1(0)-\phi_2(0)\|_1^2 \left( 1 + C_T\e^{C_T/\sqrt{\nu}}\right),
\end{equation}
for every $t\in [0,T]$.
\end{theorem}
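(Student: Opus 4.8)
The plan is to work with the differences $\phi:=\phi_1-\phi_2$, $\u:=\u_1-\u_2$ and $\mu:=\mu_1-\mu_2=-\Delta\phi+f(\phi_1)-f(\phi_2)$, which solve the linearized system obtained by subtracting \eqref{B1-w} and \eqref{B2-w} for the two solutions; here the nonlinearities split as $\phi_1\u_1-\phi_2\u_2=\phi\u_1+\phi_2\u$ and $\phi_1\nabla\mu_1-\phi_2\nabla\mu_2=\phi\nabla\mu_1+\phi_2\nabla\mu$. By \eqref{masscons} together with $\l\phi_1(0)\r=\l\phi_2(0)\r$ the difference $\phi(t)$ has zero mean for all $t$, so Poincar\'e--Wirtinger gives $\|\phi\|_1^2\le c\|\nabla\phi\|^2$; hence it suffices to propagate $\|\nabla\phi\|^2$ and convert back to the full $H^1$ norm at the end.

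Next I would derive a single energy identity by testing the difference of \eqref{B1-w} with $w=\mu$ and the difference of \eqref{B2-w} with $\v=\u$. After integration by parts the convective term in the first identity becomes $-\l\phi\u_1,\nabla\mu\r-\l\phi_2\u,\nabla\mu\r$, while the forcing in the second is $-\l\phi\nabla\mu_1,\u\r-\l\phi_2\u,\nabla\mu\r$. The key observation is that the cross term $\l\phi_2\u,\nabla\mu\r$ occurs in both, so substituting the velocity identity into the $\phi$-identity cancels it and moves $\nu\|\nabla\u\|^2+\eta\|\u\|^2$ onto the dissipative side. Writing $\l\partial_t\phi,\mu\r=\tfrac12\ddt\|\nabla\phi\|^2+\l\partial_t\phi,f(\phi_1)-f(\phi_2)\r$, this yields
\begin{equation*}
\tfrac12\ddt\|\nabla\phi\|^2+\|\nabla\mu\|^2+\nu\|\nabla\u\|^2+\eta\|\u\|^2
=\l\phi\u_1,\nabla\mu\r-\l\phi\nabla\mu_1,\u\r-\l\partial_t\phi,f(\phi_1)-f(\phi_2)\r .
\end{equation*}

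Now I would estimate the three residuals. By H\"older, $H^1\hookrightarrow L^6$ and Young, $|\l\phi\u_1,\nabla\mu\r|\le\tfrac14\|\nabla\mu\|^2+c\|\u_1\|_1^2\|\phi\|_1^2$. For $\l\phi\nabla\mu_1,\u\r$ I would not spend the whole viscous dissipation: using $\|\u\|_{L^3}\le c\|\u\|^{1/2}\|\nabla\u\|^{1/2}$ and splitting by Young between $\eta\|\u\|^2$ and $\nu\|\nabla\u\|^2$ with an optimized weight gives $|\l\phi\nabla\mu_1,\u\r|\le\tfrac{\eta}{4}\|\u\|^2+\tfrac{\nu}{4}\|\nabla\u\|^2+\tfrac{c}{\sqrt{\nu}}\|\nabla\mu_1\|^2\|\phi\|_1^2$, and it is exactly this balancing (which forces $\eta>0$) that produces the $\nu^{-1/2}$ scaling rather than $\nu^{-1}$. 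The delicate term is $\l\partial_t\phi,f(\phi_1)-f(\phi_2)\r$: since $\partial_t\phi\in\H^{-1}$ only, I would replace $\partial_t\phi$ using the equation and integrate by parts to reduce it to pairings of $\nabla\mu$ and of the convective fields against $\nabla(f(\phi_1)-f(\phi_2))$; controlling the latter through \eqref{GROW2} together with Gagliardo--Nirenberg interpolation and the elliptic identity $-\Delta\phi=\mu-(f(\phi_1)-f(\phi_2))$ (so that $\|\nabla\mu\|^2$ dominates $\|\phi\|_3$) lets me absorb the top-order part into $\|\nabla\mu\|^2$ and the rest into $\Q(R)\|\phi\|_1^2$ plus further $\nu^{-1/2}$-weighted $\|\phi\|_1^2$ contributions. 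This nonlinear term is the main obstacle.

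After these absorptions the identity reduces to $\ddt\|\nabla\phi\|^2\le g(t)\|\nabla\phi\|^2$ with $g(t)=\Q(R)\big(1+\|\u_1\|_1^2+\nu^{-1/2}\|\nabla\mu_1\|^2+\dots\big)$. The dissipative estimate for a single solution gives $\int_0^T\|\nabla\mu_1\|^2\le\Q(R)$ independently of $\nu$, and $\int_0^T\|\u_1\|_1^2\le\Q(R)/\nu$, so that $\int_0^T\|\u_1\|_1\le\Q(R)\,T^{1/2}\nu^{-1/2}$ by Cauchy--Schwarz and hence $\int_0^T g\le C_T/\sqrt{\nu}$. Gronwall's lemma then yields $\|\nabla\phi(t)\|^2\le\|\nabla\phi(0)\|^2\e^{C_T/\sqrt\nu}$, and converting both sides to the $H^1$ norm via Poincar\'e--Wirtinger gives \eqref{dipcont}. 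Finally, integrating the energy identity on $(0,t)$ and inserting this bound for $\|\phi\|_1^2$ controls $\int_0^t(\nu\|\nabla\u\|^2+\eta\|\u\|^2)$; dividing by the coercivity constants and absorbing the remaining polynomial powers of $\nu^{-1}$ into the exponential produces \eqref{dipcont2}.
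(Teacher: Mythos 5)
Your route is genuinely different from the paper's. You test the difference of the phase equations with $\bar\mu=\mu_1-\mu_2$, so the dissipation you gain is $\|\nabla\bar\mu\|^2$ and the structural cancellation is of $\l\phi_2\bar\u,\nabla\bar\mu\r$ against the matching term in the Brinkman identity; the paper instead tests with $-\Delta\bar\phi$, gains $\|\nabla\Delta\bar\phi\|^2$, and cancels $\l\phi_1\bar\u,\nabla\Delta\bar\phi\r$ using the definition of $\bar\mu$. The price of your choice is the extra term $\l\partial_t\bar\phi,f(\phi_1)-f(\phi_2)\r$, which the paper's test function avoids entirely. Your plan for it — substitute the equation for $\partial_t\bar\phi$ and control $\|\nabla[f(\phi_1)-f(\phi_2)]\|$ via \eqref{GROW2} and the $\H^2$ bounds on $\phi_1,\phi_2$ (this is exactly the paper's estimate \eqref{cff}) — does go through: the three resulting pairings are absorbed into $\|\nabla\bar\mu\|^2$, $\eta\|\bar\u\|^2+\nu\|\nabla\bar\u\|^2$ and an integrable multiple of $\|\bar\phi\|_1^2$. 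So the decomposition is sound, just bulkier than the paper's.

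There is, however, one concrete slip that breaks the claimed rate: as written you only get $\e^{C_T/\nu}$, not $\e^{C_T/\sqrt{\nu}}$. The culprit is $|\l\bar\phi\u_1,\nabla\bar\mu\r|\le\tfrac14\|\nabla\bar\mu\|^2+c\|\u_1\|_1^2\|\bar\phi\|_1^2$: the Gronwall coefficient this produces is $\|\u_1\|_1^2$, and \eqref{e-base} only gives $\int_0^T\|\u_1\|_1^2\le\Q(R)(1+\nu^{-1})$. Your later step bounds $\int_0^T\|\u_1\|_1$ by Cauchy--Schwarz, but that is not the quantity appearing in your $g$, which contains $\|\u_1\|_1^2$; hence ``$\int_0^T g\le C_T/\sqrt{\nu}$'' does not follow. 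The repair is the same device you already apply to $\l\bar\phi\nabla\mu_1,\bar\u\r$: use $\|\u_1\|_{L^3}\le c\|\u_1\|^{1/2}\|\u_1\|_1^{1/2}$, so the coefficient becomes $\|\u_1\|\,\|\u_1\|_1$, whose time integral is at most $\bigl(\int_0^T\|\u_1\|^2\bigr)^{1/2}\bigl(\int_0^T\|\u_1\|_1^2\bigr)^{1/2}\le\Q_T(R)\,\eta^{-1/2}\nu^{-1/2}$ by \eqref{e-base}. This is precisely how the paper handles its analogous term $\l\bar\phi\u_2,\nabla\Delta\bar\phi\r$. With that correction, and with the observation (which you make) that the residual polynomial powers of $\nu^{-1}$ appearing when you divide the integrated identity by the coercivity constants can be absorbed into $C_T\e^{C_T/\sqrt{\nu}}$ in \eqref{dipcont2}, your argument is complete.
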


\begin{remark}
\label{r:dc0}
    In the case $\eta=0$ (cf. \cite{Schmuck2013}) the same continuous dependence estimates hold by replacing $\sqrt{\nu}$ with $\nu$ in \eqref{dipcont} and \eqref{dipcont2}.
\end{remark}

The next result shows that any weak solution converges to a single stationary state as time goes to infinity.

\begin{theorem}
\label{LS}
Let $\nu>0$, $\eta\geq 0$ and
let $f$ be real analytic satisfying \eqref{DISS1}-\eqref{DISS0}.
For every fixed $\phi_0\in \H^1$,
the global solution $\phi$ originating from $\phi_0$ converges to an equilibrium $\phi^\star$ as $t\to\infty$, with the following convergence rate
\begin{equation}
\label{convrate}
\|\phi(t)-\phi^\star\|_1 \leq \frac{c_\nu}{(1+t)^{\theta/(1-2\theta)}},\quad\forall t\geq t^\ast,
\end{equation}
for some $\theta=\theta(\phi^\star)\in (0, \tfrac{1}{2})$,
$c_\nu=c_\nu(\|\phi_0\|_1)\geq 0$ and $t^\ast>0$.
Here $\phi^\star\in \H^2$ is a
solution to the stationary system
$$
-\Delta z+ f(z)=const\, \text{ in }\Omega,\qquad\partial_\n z=0 \text{ on }\partial\Omega,\qquad\l z\r=\l\phi_0\r.
$$
Furthermore, the velocity field $\u$ vanishes and satisfies
\begin{equation}
\label{convrate-u}
\|\u(t)\|_1 \leq \frac{c_\nu}{(1+t)^{\theta/4(1-2\theta)}},\quad\forall t\geq t^\ast.
\end{equation}
Here $c_\nu\to\infty$ as $\nu\to 0$.
\end{theorem}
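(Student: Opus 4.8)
The plan is to combine the energy dissipation of the CHB system with a {\L}ojasiewicz--Simon inequality for the Cahn--Hilliard energy
$$
\E(\phi)=\tfrac12\|\nabla\phi\|^2+\int_\Omega F(\phi)\,\d\x .
$$
Testing \eqref{B1-w} with $\mu=-\Delta\phi+f(\phi)$, integrating the convective term by parts (licit since $\u\in\HD$ vanishes on $\partial\Omega$), and testing \eqref{B2-w} with $\v=\u$, the convective contribution $-\l\phi\nabla\mu,\u\r$ cancels and one obtains the dissipation identity
\begin{equation}
\label{pf-diss}
\ddt\,\E(\phi)+\|\nabla\mu\|^2+\nu\|\nabla\u\|^2+\eta\|\u\|^2=0 .
\end{equation}
Hence $\E(\phi(t))$ is non-increasing and, since $F\geq -c$, bounded below, so it tends to a limit $\E_\infty$. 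Moreover $\u$ is slaved to $\nabla\mu$: choosing $\v=\u$ in \eqref{B2-w} and using the Poincar\'e inequality on $\HD$ together with the uniform $L^\infty$-bound on $\phi$ coming from the higher-order estimates of Section~\ref{S:Higher_order}, one finds $\|\u\|_1\le c_\nu\|\phi\nabla\mu\|\le c_\nu\|\nabla\mu\|$, so that the dissipation $\mathcal D:=\|\nabla\mu\|^2+\nu\|\nabla\u\|^2+\eta\|\u\|^2$ obeys $\|\nabla\mu\|^2\le\mathcal D\le c_\nu\|\nabla\mu\|^2$. The factor $1/\nu$ hidden in $c_\nu$ is the origin of the blow-up $c_\nu\to\infty$ as $\nu\to0$.

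Next I would carry out the $\omega$-limit analysis. The global dissipative estimate of Section~\ref{wpCHB} and the higher-order estimates of Section~\ref{S:Higher_order} give a uniform-in-time bound for $\phi(t)$ in $\H^2$, so the orbit is relatively compact in $\H^1$ and $\omega(\phi_0)$ is non-empty, compact and connected. Integrating \eqref{pf-diss} over $(0,\infty)$ shows that the total dissipation is finite; consequently $\nabla\mu\equiv0$ and $\u\equiv0$ on $\omega(\phi_0)$, that is, every $\phi^\star\in\omega(\phi_0)$ solves $-\Delta\phi^\star+f(\phi^\star)=\text{const}$ with $\partial_\n\phi^\star=0$ and $\l\phi^\star\r=\l\phi_0\r$, and $\phi^\star\in\H^2$ by elliptic regularity. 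This is exactly the stationary system in the statement.

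The heart of the proof is the {\L}ojasiewicz--Simon inequality. As $f$ is real analytic and satisfies the growth bounds, $\E$ is analytic on $\H^1$ and its constrained $L^2$-gradient on $\{\l\phi\r=\l\phi_0\r\}$ equals $\mu-\l\mu\r$, with Fredholm Hessian $-\Delta+f'(\phi^\star)$ under Neumann conditions. The abstract theorem then furnishes $\theta\in(0,\tfrac12]$ and $c,\beta>0$ with
$$
|\E(\phi)-\E(\phi^\star)|^{1-\theta}\le c\,\|\mu-\l\mu\r\|\le c\,\|\nabla\mu\|
\qquad\text{whenever } \|\phi-\phi^\star\|_1<\beta,
$$
the last inequality by Poincar\'e--Wirtinger. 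Setting $H(t):=\E(\phi(t))-\E_\infty\ge0$ and using \eqref{pf-diss}, as long as the orbit stays in the $\beta$-ball one has $-H'\ge\|\nabla\mu\|^2\ge c\,H^{2(1-\theta)}$ and $-\ddt H^\theta\ge c\,\|\nabla\mu\|\ge c\,\|\pt\phi\|_{\H^{-1}}$, the latter because $\pt\phi=\Delta\mu-\nabla\cdot(\phi\u)$ and $\u$ is slaved to $\nabla\mu$. The first inequality gives, by ODE comparison, the energy decay $H(t)\le c_\nu(1+t)^{-1/(1-2\theta)}$ (exponential if $\theta=\tfrac12$); the second, once integrated, yields $\int_t^\infty\|\pt\phi\|_{\H^{-1}}\,\d s\le c\,H(t)^\theta$. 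A standard continuation argument shows the orbit cannot leave the $\beta$-ball after some $t^\ast$, whence $\phi(t)$ has finite length in $\H^{-1}$ and converges to a single $\phi^\star$; combined with the companion bound $\|\phi(t)-\phi^\star\|_1\le c\,H(t)^\theta$ (a consequence of the gradient inequality and the uniform higher-order bounds) this gives \eqref{convrate}. The main obstacle is precisely this step: verifying analyticity and the Fredholm property in the correct \emph{constrained} setting, and checking that the convective coupling --- through the slaving estimate --- leaves the comparison $\mathcal D\sim\|\nabla\mu\|^2$ intact.

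Finally, \eqref{convrate-u} follows from the slaving bound $\|\u(t)\|_1\le c_\nu\|\nabla\mu(t)\|$ together with a pointwise decay of $\|\nabla\mu(t)\|$. Since $\nabla\mu=-\nabla\Delta\phi+f'(\phi)\nabla\phi$ vanishes at $\phi^\star$, one has $\|\nabla\mu(t)\|\le c\,\|\phi(t)-\phi^\star\|_3$; interpolating this norm between the $\H^1$-rate just obtained and the uniform higher-order bounds --- applying if necessary the uniform Gronwall lemma of Section~\ref{Sec2} to turn $\int_t^{t+1}\|\nabla\mu\|^2\,\d s\le H(t)$ into a pointwise estimate --- produces a decay of the form $(1+t)^{-\theta/4(1-2\theta)}$ for $\|\u(t)\|_1$, with constant again degenerating like $1/\nu$ as $\nu\to0$.
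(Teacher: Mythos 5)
Your overall strategy (energy dissipation identity, $\omega$-limit analysis, {\L}ojasiewicz--Simon inequality, finite-length argument in $\H^{-1}$, slaving of $\u$ to $\nabla\mu$) is the same as the paper's, and most of it is sound. The genuine gap is the step where you pass from the decay of the energy $H(t)$ and of $\|\phi(t)-\phi^\star\|_{\H^{-1}}$ to the claimed $\H^1$-rate \eqref{convrate} with the \emph{same} exponent $\theta/(1-2\theta)$. You invoke a ``companion bound'' $\|\phi(t)-\phi^\star\|_1\le c\,H(t)^\theta$ as ``a consequence of the gradient inequality and the uniform higher-order bounds,'' but this does not follow: the {\L}ojasiewicz inequality bounds $|H|^{1-\theta}$ \emph{above} by $\|\nabla\mu\|$, it gives no lower bound on $H$ in terms of $\|\phi-\phi^\star\|_1$ (near a degenerate critical point the energy can be much flatter than quadratic, which is precisely why $\theta<\tfrac12$ occurs). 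The only soft route available is interpolation, $\|\Phi\|_1\le c\|\Phi\|_{\H^{-1}}^{1/2}\|\Phi\|_{3}^{1/2}$ with $\Phi=\phi-\phi^\star$, which, even granting the uniform $\H^3$ bound, yields only the exponent $\theta/2(1-2\theta)$ --- half of what is claimed. The paper closes this gap with a dedicated parabolic argument (Proposition~\ref{CR}): writing the equation for $\Phi$, testing with $-\Delta\Phi$, controlling the convective term through the Stokes identity \eqref{Stokes-id} so that $\|\u\|$ is dominated by $\|\nabla\Delta\Phi\|+\|\Phi\|_{\H^{-1}}$, one arrives at
\begin{equation*}
\frac12\,\ddt\|\nabla\Phi\|^2+\frac12\|\nabla\Delta\Phi\|^2\le c\,\|\Phi\|_{\H^{-1}}^2\le \frac{c}{(1+t)^{2\theta/(1-2\theta)}},
\end{equation*}
and the decay Gronwall lemma (Lemma~\ref{gr}) then transfers the full $\H^{-1}$ rate to $\|\nabla\Phi\|$. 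Some version of this argument is indispensable; without it your proof only delivers a weaker rate.

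A secondary, smaller issue is the velocity estimate. Your slaving bound $\|\u\|_1\le c_\nu\|\nabla\mu\|$ needs a \emph{pointwise-in-time} decay of $\|\nabla\mu(t)\|$, which is not available from the energy identity alone (that only gives $\nabla\mu\in L^2(0,\infty;\H)$). The paper first proves a further regularization, $\|\nabla\mu(t)\|+\|\nabla\Delta\phi(t)\|\le c$ for $t\ge2$, via a separate differential inequality for $\|\nabla\mu\|^2$ and the uniform Gronwall lemma, and then obtains $\|\u\|_1^2\le c_\nu\|\Delta\Phi\|\le c_\nu\|\nabla\Phi\|^{1/2}\|\nabla\Delta\Phi\|^{1/2}\le c_\nu\|\nabla\Phi\|^{1/2}$, which is exactly where the exponent $\theta/4(1-2\theta)$ in \eqref{convrate-u} comes from. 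You gesture at this (``apply the uniform Gronwall lemma to turn the integral bound into a pointwise one''), which is the right idea, but the required differential inequality for $\|\nabla\mu\|^2$ is a nontrivial computation that should be carried out rather than assumed.
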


Let us now introduce the definition of weak solution to the CHHS system endowed with \eqref{BCfi}-\eqref{IC} and
\begin{equation}
\label{BCuHS}
\u\cdot\n =\mathbf{0}, \quad\text{ on }\partial\Omega\times(0,T).
\end{equation}

\begin{definition}
\label{d-solutionHS}
Let $\phi_0\in \H^{1}$ and $T>0$ be given.
A pair $(\phi,\u)$ is a (weak) solution to the CHHS system endowed with \eqref{BCfi}-\eqref{IC} and \eqref{BCuHS} if
$$\phi\in \C_{\mathbf{w}}([0,T], \H^{1})\cap L^2(0,T;\H^3)$$
satisfies
\begin{align*}
&\l \partial_t\phi(t), w\r+\l \nabla\cdot(\phi(t)\u(t)),w\r+\l \nabla\mu(t),\nabla w\r=0,\quad \forall w\in \H^{1}, \quad \text{a.e.\ $t \in [0,T]$},\\
&\partial_\n \phi=0, \quad\text{ a.e. on }\partial\Omega\times(0,T),\\
&\phi|_{t=0}=\phi_0, \quad\text{ a.e. in }\Omega,
\end{align*}
with $\mu\in L^2(0,T;\H^{1})$ given by \eqref{Bmu}
and
$$\u\in L^2(0,T;{\boldsymbol H})$$
fulfills
$$\eta \l \u(t),\v\r =-\l \phi(t)\nabla \mu(t), \v\r,\quad \forall\v\in \HD, \quad \text{a.e.\ $t \in [0,T]$}.$$
\end{definition}

\begin{remark}
    It is worth noting that the regularity assumed in Definition~\ref{d-solutionHS} yields
        $$\nabla\cdot (\phi\u)\in L^{8/5}(0,T;\H^{-1})\qquad\text{whence}\qquad \partial_{t} \phi \in L^{8/5}(0,T;\H^{-1}).$$
\end{remark}

The following theorem says that a weak solution to the CHHS system can be found as a limit of solutions to CHB system
as viscosity vanishes.
\begin{theorem}
\label{t:nuto0}
Let $\eta>0$ and let $f$ satisfy \eqref{GROW}-\eqref{DISS1}.
 For $\phi_0\in \H^1$ let $\{\nu_n\}_{n\in\N}$ be a sequence of positive numbers such that $\nu_n\to 0$ as $n\to\infty$.
 Let $(\phi_n,\u_n)$ be the sequence of weak solutions corresponding to the CHB system with $\nu=\nu_n$ originating from $\phi_0$.
Then, up to a subsequence, $(\phi_n,\u_n)$ converges to a weak solution $(\phi,\u)$ to the CHHS system according to
Definition~\ref{d-solutionHS}
in the following sense:
\begin{align*}
&\phi_n \to \phi \quad \text{ weakly in } L^2(0,T;\H^3)  \text{ and strongly in } L^2(0,T;\H^2),\\
&\u_n\to \u \quad \text{ weakly in } L^2(0,T;{\boldsymbol H}).
\end{align*}
\end{theorem}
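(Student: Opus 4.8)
The plan is to combine uniform-in-$\nu$ a priori bounds with compactness and then to pass to the limit in the two weak equations. The starting point is the basic energy estimate of Section~\ref{S:basic_estimates}: choosing $w=\mu_n$ in \eqref{B1-w} and $\v=\u_n$ in \eqref{B2-w} and adding the two identities, the coupling terms cancel and we obtain
\begin{equation*}
\ddt\E(\phi_n)+\nu_n\|\nabla\u_n\|^2+\eta\|\u_n\|^2+\|\nabla\mu_n\|^2=0,\qquad \E(\phi)=\tfrac12\|\nabla\phi\|^2+\int_\Omega F(\phi)\,\d\x.
\end{equation*}
Since $\E(\phi_0)$ does not depend on $n$ and $\nu_n$ multiplies a nonnegative term, integrating in time yields bounds that are \emph{uniform in} $n$: by \eqref{DISS1}, $\phi_n$ is bounded in $L^\infty(0,T;\H^1)$; since $\eta>0$, $\u_n$ is bounded in $L^2(0,T;{\boldsymbol H})$ (whereas the $\HD$-bound $\sqrt{\nu_n}\,\nabla\u_n$ degenerates, which is exactly why the limit velocity will only lie in ${\boldsymbol H}$); and $\mu_n$ is bounded in $L^2(0,T;\H^1)$ (its mean being controlled through that of $f(\phi_n)$ by \eqref{GROW}). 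The higher-order bound $\phi_n\in L^2(0,T;\H^3)$ and, by comparison, $\partial_t\phi_n\in L^{8/5}(0,T;\H^{-1})$ are obtained exactly as in the a priori estimates of Section~\ref{S:basic_estimates} (same computation as in the Remark following Definition~\ref{d-solutionHS}); since they rest only on the uniform control of $\mu_n$ in $L^2(0,T;\H^1)$, of $\phi_n$ in $L^\infty(0,T;\H^1)$ and of $\u_n$ in $L^2(0,T;{\boldsymbol H})$, they too are uniform in $n$.

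Extract a subsequence (not relabelled) along which $\phi_n\rightharpoonup\phi$ weakly in $L^2(0,T;\H^3)$ and weakly-$*$ in $L^\infty(0,T;\H^1)$, $\u_n\rightharpoonup\u$ weakly in $L^2(0,T;{\boldsymbol H})$, $\mu_n\rightharpoonup\mu$ weakly in $L^2(0,T;\H^1)$, and $\partial_t\phi_n\rightharpoonup\partial_t\phi$ weakly in $L^{8/5}(0,T;\H^{-1})$. The key compactness ingredient is the Aubin--Lions--Simon lemma applied to the triple $\H^3\hookrightarrow\hookrightarrow\H^2\hookrightarrow\H^{-1}$: from the uniform bounds on $\phi_n$ in $L^2(0,T;\H^3)$ and on $\partial_t\phi_n$ in $L^{8/5}(0,T;\H^{-1})$ we conclude that $\phi_n\to\phi$ \emph{strongly} in $L^2(0,T;\H^2)$, which is exactly the stated conclusion. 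Since $\H^2\hookrightarrow C(\overline\Omega)$ for $d\le 3$, this upgrades to $\phi_n\to\phi$ strongly in $L^2(0,T;L^\infty(\Omega))$, the topology needed to control the nonlinear products below.

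It remains to pass to the limit in the two weak equations, tested against $w\,\chi(t)$ and $\v\,\chi(t)$ with $w\in\H^1$, $\v\in\HD$ and $\chi\in \C^\infty_0(0,T)$. The linear terms $\langle\partial_t\phi_n,w\rangle$, $\langle\nabla\mu_n,\nabla w\rangle$ and $\eta\langle\u_n,\v\rangle$ pass to the limit by weak convergence. The vanishing-viscosity term is disposed of by writing $\nu_n\langle\nabla\u_n,\nabla\v\rangle=\sqrt{\nu_n}\,\langle\sqrt{\nu_n}\,\nabla\u_n,\nabla\v\rangle$ and noting that $\sqrt{\nu_n}\,\nabla\u_n$ is bounded in $L^2$ while $\sqrt{\nu_n}\to 0$, so this term tends to $0$. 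For the nonlinearities we rewrite $\langle\nabla\cdot(\phi_n\u_n),w\rangle=-\langle\u_n,\phi_n\nabla w\rangle$ and $\langle\phi_n\nabla\mu_n,\v\rangle=\langle\nabla\mu_n,\phi_n\v\rangle$; in both cases the factor $\phi_n\nabla w$ (resp.\ $\phi_n\v$) converges \emph{strongly} in $L^2(0,T;\H)$ thanks to $\phi_n\to\phi$ in $L^2(0,T;L^\infty)$, while $\u_n$ (resp.\ $\nabla\mu_n$) converges weakly in $L^2(0,T;\H)$, so a strong--weak pairing gives the limits $-\langle\u,\phi\nabla w\rangle$ and $\langle\nabla\mu,\phi\v\rangle$. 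Finally, $-\Delta\phi_n\to-\Delta\phi$ strongly in $L^2(0,T;\H)$ and, using \eqref{GROW} together with $\phi_n\to\phi$ a.e.\ (up to a further subsequence), $f(\phi_n)\to f(\phi)$ strongly in $L^2(0,T;\H)$ by dominated convergence; hence the limit obeys $\mu=-\Delta\phi+f(\phi)$, identifying $\mu$. The initial condition $\phi(0)=\phi_0$ is inherited from $\phi_n(0)=\phi_0$ through the convergence of $\phi_n$ in $\C([0,T];\H^{-1})$, and $\phi\in\C_{\mathbf{w}}([0,T],\H^1)$ follows from $\phi\in L^\infty(0,T;\H^1)$ with $\partial_t\phi\in L^{8/5}(0,T;\H^{-1})$. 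Altogether $(\phi,\u)$ is a weak solution of the CHHS system in the sense of Definition~\ref{d-solutionHS}.

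The main obstacle is not the vanishing-viscosity term itself, which is immediate once the $\sqrt{\nu_n}$-weighted bound is in hand, but securing the \emph{strong} compactness of $\{\phi_n\}$ uniformly in $\nu$: one must verify that the a priori estimates of Section~\ref{S:basic_estimates} — in particular the $L^2(0,T;\H^3)$ bound and the bound on $\partial_t\phi_n$ — do not blow up as $\nu_n\to 0$, so that Aubin--Lions yields convergence of $\phi_n$ in a topology strong enough, namely $L^2(0,T;L^\infty)$, to take the limit in the products $\phi_n\u_n$ and $\phi_n\nabla\mu_n$, where $\u_n$ and $\nabla\mu_n$ are only weakly convergent.
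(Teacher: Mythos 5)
Your proposal is correct and follows essentially the same route as the paper: uniform-in-$\nu$ energy and higher-order bounds (with the $\HD$-control degenerating as $\sqrt{\nu_n}$), Aubin--Lions compactness through the triple $\H^3\hookrightarrow\hookrightarrow\H^2\hookrightarrow\H^{-1}$ to get strong convergence of $\phi_n$ in $L^2(0,T;L^\infty(\Omega))$, and strong--weak pairing to pass to the limit in $\phi_n\u_n$ and $\phi_n\nabla\mu_n$, with the viscous term killed by the $\sqrt{\nu_n}$-weighted bound. The only cosmetic difference is in recovering the initial datum (the paper tests against $\psi v$ with $\psi(0)=1$, $\psi(T)=0$ and integrates by parts in time, while you invoke convergence in $\C([0,T];\H^{-1})$), which is immaterial.
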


Finally, in dimension two, we state a result about the estimate of the difference between a solution to the BCH system and
a solution to the CHHS system.
Indeed, it is known from \cite{LTZ} that the CHHS system endowed with \eqref{BCfi}-\eqref{IC} and \eqref{BCuHS}
admits a unique \emph{strong} solution provided that $\phi_0\in\H^2$, which is also \emph{global}  when $d=2$.
In this case, we have the following result
\begin{theorem}
\label{t:close}
Let $d=2$ and $\eta>0$. Let $f$ satisfy \eqref{DISS1}-\eqref{GROW2}.
Take $\phi_0^\nu,\phi_0\in\H^2$ such that $\l\phi_0^\nu\r=\l\phi_0\r$ and set
$$R:=\sup_{\nu> 0}\{\|\phi_0^\nu\|_2,\|\phi_0\|_2\}<\infty.$$
Let $(\phi_\nu,\u_{\nu})$ be the unique weak solution to the CHB system with $\nu>0$,
originating from $\phi_0^\nu$, and $(\phi, \u)$ the solution to the CHHS system with initial datum $\phi_0$.
Then, for every $T>0$, there exists $C_T>0$ (depending only on $R$) such that
$$\|\phi_\nu(t)-\phi(t)\|_1^2+\int_0^t\|\u_\nu(y)-\u(y)\|^2\,\d y\leq
\|\phi_0^\nu-\phi_0\|_1^2\e^{C_T}+C_T\nu^{1/2},\quad \forall t\in [0,T].$$
In particular, if $\phi_0^\nu=\phi_0$, then
$$\phi_\nu\to \phi \quad\text{in }L^\infty(0,T;\H^1)\;\text{ as }\nu\to 0,$$
for all $T>0$.
\end{theorem}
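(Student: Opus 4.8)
The plan is to work with the differences $\psi:=\phi_\nu-\phi$, $\sigma:=\mu_\nu-\mu$ and $\boldsymbol z:=\u_\nu-\u$, and to close a single energy inequality of the form $\ddt\|\psi\|_1^2+c\,\|\boldsymbol z\|^2\le C\|\psi\|_1^2+O(\nu^{1/2})$, from which the statement follows by Gronwall's lemma. Subtracting the two systems yields, for all $w\in\H^1$, $\l\pt\psi,w\r+\l\nabla\cdot(\phi_\nu\boldsymbol z+\psi\u),w\r+\l\nabla\sigma,\nabla w\r=0$, together with $\sigma=-\Delta\psi+g$, $g:=f(\phi_\nu)-f(\phi)$, and, for all $\v\in\HD$, $\nu\l\nabla\u_\nu,\nabla\v\r+\eta\l\boldsymbol z,\v\r=-\l\phi_\nu\nabla\sigma+\psi\nabla\mu,\v\r$. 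By mass conservation $\l\psi(t)\r=0$, so Poincar\'e applies to $\psi$. Throughout I would use the 2D bounds available here: the global strong CHHS solution of \cite{LTZ} provides $\u,\nabla\mu\in L^\infty(0,T;L^4)$ and $\phi\nabla\mu\in L^\infty(0,T;\H)$, while the higher-order estimates of Section~\ref{S:Higher_order}, which are uniform in $\nu$ for $\|\phi_0^\nu\|_2\le R$, give $\|\phi_\nu\|_{L^\infty(0,T;\H^2)}\le\Q(R)$ and in particular $\|\phi_\nu\|_{L^\infty(\Omega\times(0,T))}\le\Q(R)$, whence by \eqref{GROW2} and Gagliardo--Nirenberg $\|g\|_1\le C\|\psi\|_1$.

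The essential difficulty is that $\u\in{\boldsymbol H}$ satisfies only $\u\cdot\n=0$, so $\boldsymbol z\notin\HD$ and cannot be inserted as a test function in the Brinkman difference equation (which carries the term $\nu\l\nabla\u_\nu,\nabla\v\r$ and is valid only for $\v\in\HD$). To get around this I would introduce the corrector $\tilde\u_\nu\in\HD$, defined at each $t$ as the solution of the Brinkman problem driven by the \emph{Darcy} force, namely $\nu\l\nabla\tilde\u_\nu,\nabla\v\r+\eta\l\tilde\u_\nu,\v\r=-\l\phi\nabla\mu,\v\r$ for all $\v\in\HD$, and split $\boldsymbol z=\boldsymbol e+\boldsymbol r$ with $\boldsymbol e:=\u_\nu-\tilde\u_\nu\in\HD$ and $\boldsymbol r:=\tilde\u_\nu-\u$. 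The point is that $\boldsymbol e$ lies in $\HD$ and solves $\nu\l\nabla\boldsymbol e,\nabla\v\r+\eta\l\boldsymbol e,\v\r=-\l\phi_\nu\nabla\sigma+\psi\nabla\mu,\v\r$ for $\v\in\HD$, so it \emph{can} be tested against itself; the entire $O(1)$ mismatch created by the boundary conditions is thereby confined to the purely elliptic remainder $\boldsymbol r$.

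Next I would derive the combined energy identity. Testing the $\psi$-equation with $\sigma$ gives $\l\pt\psi,\sigma\r+\|\nabla\sigma\|^2=\l\phi_\nu\boldsymbol e+\phi_\nu\boldsymbol r+\psi\u,\nabla\sigma\r$, while testing the $\boldsymbol e$-equation with $\boldsymbol e$ gives $\nu\|\nabla\boldsymbol e\|^2+\eta\|\boldsymbol e\|^2=-\l\phi_\nu\nabla\sigma,\boldsymbol e\r-\l\psi\nabla\mu,\boldsymbol e\r$. Adding the two, the terms $\l\phi_\nu\boldsymbol e,\nabla\sigma\r$ and $-\l\phi_\nu\nabla\sigma,\boldsymbol e\r$ \emph{cancel exactly}, which is what defuses the otherwise fatal coefficient problem; writing $\l\pt\psi,\sigma\r=\tfrac12\ddt\|\nabla\psi\|^2+\l\pt\psi,g\r$ and controlling $\l\pt\psi,g\r$ through $\pt\psi\in L^2(0,T;\H^{-1})$ (by comparison in the $\psi$-equation) and $\|g\|_1\le C\|\psi\|_1$, one obtains after Young's inequality and absorption into $\|\nabla\sigma\|^2$ and $\eta\|\boldsymbol e\|^2$
\begin{equation*}
\tfrac12\ddt\|\nabla\psi\|^2+\tfrac12\|\nabla\sigma\|^2+\nu\|\nabla\boldsymbol e\|^2+\tfrac{\eta}{4}\|\boldsymbol e\|^2\le C\|\psi\|_1^2+C\|\boldsymbol r\|^2 .
\end{equation*}
An analogous test with $w=\psi$ (using $\l\psi\u,\nabla\psi\r=0$ and placing the small constant on $\|\boldsymbol z\|^2$, the $O(1)$ one on $\|\nabla\psi\|^2\le\|\psi\|_1^2$) supplies the missing $\ddt\|\psi\|^2$, so that altogether $\ddt\|\psi\|_1^2+c\,\|\boldsymbol e\|^2\le C\|\psi\|_1^2+C\|\boldsymbol r\|^2$.

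It remains to estimate the corrector remainder, and \textbf{this is the main obstacle}: a plain energy estimate for $\boldsymbol r=\tilde\u_\nu-\u$ only yields $\|\boldsymbol r\|=O(1)$, because the Darcy force does not vanish on $\partial\Omega$ and $\boldsymbol r\notin\HD$. The correct rate comes from the $\sqrt{\nu}$ boundary layer: using the 2D regularity of $\u$ (hence of its tangential trace $\u|_{\partial\Omega}\in H^{1/2}(\partial\Omega)$) I would construct a divergence-free field $\boldsymbol b_\nu$, supported in an $O(\sqrt\nu)$ neighbourhood of $\partial\Omega$, with $\boldsymbol b_\nu=\u$ on $\partial\Omega$ and $\|\boldsymbol b_\nu\|^2+\nu\|\nabla\boldsymbol b_\nu\|^2\le C\nu^{1/2}$; since then $\u-\boldsymbol b_\nu\in\HD$, testing the equation for $\tilde\u_\nu-(\u-\boldsymbol b_\nu)\in\HD$ against itself yields $\|\boldsymbol r(t)\|^2\le C\nu^{1/2}$ uniformly on $[0,T]$, whence $\int_0^T\|\boldsymbol r\|^2\le C_T\nu^{1/2}$. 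Feeding this into the differential inequality and applying Gronwall's lemma gives $\|\psi(t)\|_1^2\le\|\psi(0)\|_1^2\,\e^{C_T}+C_T\nu^{1/2}$, while integrating the dissipation term and using $\|\boldsymbol z\|^2\le2\|\boldsymbol e\|^2+2\|\boldsymbol r\|^2$ produces the bound on $\int_0^t\|\boldsymbol z\|^2$; the constants depend only on $R$, as claimed, and the case $\phi_0^\nu=\phi_0$ yields convergence in $L^\infty(0,T;\H^1)$.
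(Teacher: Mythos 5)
Your energy scheme is structurally sound and, at the level of the final differential inequality, parallels the paper's: both rely on the exact cancellation of the $\l \phi_\nu\nabla\bar\mu,\cdot\r$ coupling, on the uniform\mbox{-}in\mbox{-}$\nu$ $\H^2$ bound \eqref{puntualeH2_2d}, on the integrability in time of the Gronwall coefficients, and on the control $\|\nabla[f(\phi_\nu)-f(\phi)]\|\leq c\|\phi_\nu-\phi\|_1$. The genuine divergence is in how the velocity equation is tested. The paper takes $\bar\u=\u_\nu-\u$ \emph{directly} as test function in the difference of the Brinkman and Darcy equations, symmetrizes by adding $-\nu\l\nabla\u,\nabla\bar\u\r$ to both sides, and extracts the rate from the cross term via
$\nu|\l\nabla\u,\nabla\u_\nu\r|\leq \tfrac12\nu^{1/2}\big(\nu\|\nabla\u_\nu\|^2+\|\nabla\u\|^2\big)$,
both factors being uniformly integrable in time by \eqref{e-base}, Remark~\ref{PM_forcing_higher_est} and \cite[Lemma~2.1]{LTZ}; no boundary-layer corrector appears anywhere, and the whole $O(\nu^{1/2})$ comes from this interior cross term. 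You are right that $\bar\u\notin\HD$, so the paper's direct test is formal at the point where $\nu\Delta\u_\nu$ is integrated by parts against $\bar\u$ (a boundary integral involving $\partial_\n\u_\nu\cdot\u$ is silently discarded); your corrector decomposition $\bar\u=\boldsymbol{e}+\boldsymbol{r}$ with $\boldsymbol{e}=\u_\nu-\tilde\u_\nu\in\HD$ is an honest way to confront this, and the cancellation you exploit for $\boldsymbol{e}$ is correct.

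The gap is in the step that carries all the weight: the asserted lifting lemma. You claim a divergence-free $\boldsymbol{b}_\nu$, supported in an $O(\sqrt\nu)$ collar of $\partial\Omega$, equal to $\u$ on $\partial\Omega$, with $\|\boldsymbol{b}_\nu\|^2+\nu\|\nabla\boldsymbol{b}_\nu\|^2\leq C\nu^{1/2}$, on the sole basis of the trace $\u|_{\partial\Omega}\in H^{1/2}(\partial\Omega)$. A Hopf-type cutoff of width $\delta$ and amplitude $A$ gives $\|\boldsymbol{b}\|^2\sim A^2\delta$ and $\|\nabla\boldsymbol{b}\|^2\sim A^2\delta^{-1}$, so the claimed rates require an $L^\infty$ control of $\u$ near the boundary. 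But the Hele--Shaw velocity here is only known to satisfy $\u\in L^2(0,T;(H^1(\Omega))^2)$, and in $d=2$ neither $H^1(\Omega)$ nor $H^{1/2}(\partial\Omega)$ embeds into $L^\infty$; writing $\u=\nabla^\perp\Psi$ with $\Psi\in H^2$, $\Psi|_{\partial\Omega}=0$, and cutting off at scale $\delta$ one only gets $\|\boldsymbol{b}_\delta\|^2\lesssim\|\u\|_{H^1}^2\delta^{1/2}$ and $\nu\|\nabla\boldsymbol{b}_\delta\|^2\lesssim\nu\delta^{-3/2}\|\u\|_{H^1}^2$, which optimizes (at $\delta=\nu^{1/2}$) to $O(\nu^{1/4})$, not $O(\nu^{1/2})$. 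Hence your route, as written, proves convergence but with a strictly weaker rate than the theorem claims. (A smaller inaccuracy: the regularity $\u,\nabla\mu\in L^\infty(0,T;L^4)$ you invoke is not available --- only $L^2$ in time --- though this does not affect the Gronwall step.) To recover $\nu^{1/2}$ you should either justify the paper's direct test of the velocity difference and take the rate from the interior cross term $\nu\l\nabla\u,\nabla\u_\nu\r$, or upgrade the boundary regularity of $\u$ before building the layer.
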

%%%%%%%%%%%%%%%%%%%%%%%%%%%%%%%%%%%%%%%%%

%%%%%%%%%%%%%%%%%%%%%%%%%%%%%%%%%%%%%%%%%
\subsection{Basic inequalities}
We will exploit the classical inequalities due to Sobolev, Gagliardo and Nirenberg, Agmon and Poincar\'{e}, respectively, which are standard (see, e.g., \cite{Tartar2007,Temam2001}).

We also need a pair of Gronwall-type inequalities.
The uniform Gronwall lemma (\cite[Section~1.1.3]{Temam1997}), namely,

\begin{lemma}
\label{gen-gronw}
Let $\psi_0$ be an absolutely continuous nonnegative function and $\psi_1, \psi_2$ be
two nonnegative functions satisfying, almost everywhere in $\R^+$,
the differential inequality
$$
\ddt \psi_0\leq \psi_0\psi_1+\psi_2.
$$
Assume also that
$$
\sup_{t\geq0}\int_t^{t+r}\psi_{\imath}(\tau) d\tau\leq m_{i}, \quad i= 0,1,2,
$$
for some positive constants $m_{\imath}$ and $r>0$.
Then,
$$\psi_0(t+r) \leq \left( \frac{m_0}{r} + m_2 \right) e^{m_1},\quad \forall t\geq0.$$
\end{lemma}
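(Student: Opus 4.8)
The plan is to reduce the statement to the elementary (differential) Gronwall inequality by a standard averaging trick in an auxiliary variable. First I would fix $t\geq 0$ together with a free point $s\in[t,t+r]$, and regard the hypothesis $\ddt\psi_0\leq \psi_0\psi_1+\psi_2$ as a scalar ODE inequality on the interval $[s,t+r]$. Since $\psi_0$ is absolutely continuous, the differential inequality may be integrated, and the classical Gronwall lemma on $[s,t+r]$ gives
$$\psi_0(t+r)\leq \psi_0(s)\,\e^{\int_s^{t+r}\psi_1(\tau)\,\d\tau}+\int_s^{t+r}\psi_2(\sigma)\,\e^{\int_\sigma^{t+r}\psi_1(\tau)\,\d\tau}\,\d\sigma.$$

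Next I would control the exponential factors using the uniform integral bounds. Since $\psi_1\geq 0$ and $s\geq t$, we have $\int_s^{t+r}\psi_1\leq \int_t^{t+r}\psi_1\leq m_1$, and likewise $\int_\sigma^{t+r}\psi_1\leq m_1$ for every $\sigma\in[s,t+r]$. Substituting these bounds and then invoking the hypothesis on the integral of $\psi_2$, the previous estimate simplifies to
$$\psi_0(t+r)\leq \e^{m_1}\Big(\psi_0(s)+\int_t^{t+r}\psi_2(\sigma)\,\d\sigma\Big)\leq \e^{m_1}\big(\psi_0(s)+m_2\big),$$
which holds for every $s\in[t,t+r]$.

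The key step is to eliminate the free point $s$ by averaging. Because the left-hand side is independent of $s$, I would integrate the last inequality over $s\in[t,t+r]$, obtaining
$$r\,\psi_0(t+r)\leq \e^{m_1}\Big(\int_t^{t+r}\psi_0(s)\,\d s+r\,m_2\Big)\leq \e^{m_1}\big(m_0+r\,m_2\big),$$
where the final inequality uses the uniform bound on the integral of $\psi_0$. Dividing by $r$ yields precisely
$$\psi_0(t+r)\leq \Big(\frac{m_0}{r}+m_2\Big)\e^{m_1},$$
as claimed.

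I do not anticipate any genuine obstacle: the only non-routine ingredient is the integration in $s$, which converts the pointwise value $\psi_0(s)$---uncontrolled on its own---into the average $\frac{1}{r}\int_t^{t+r}\psi_0$, for which the hypothesis supplies the bound $m_0/r$. The absolute continuity of $\psi_0$ is exactly what legitimizes the Gronwall step, while the nonnegativity of $\psi_1$ and $\psi_2$ is what makes the one-sided estimates on the exponential integrals and on the $\psi_2$-term go through.
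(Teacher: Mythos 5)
Your proof is correct, and it is precisely the classical argument behind this lemma: the paper gives no proof of its own but cites \cite[Section~1.1.3]{Temam1997}, where the same three steps appear --- Gronwall on $[s,t+r]$, the one-sided bounds $\int_s^{t+r}\psi_1\leq m_1$ and $\int_s^{t+r}\psi_2\leq m_2$ using nonnegativity, and the averaging in $s$ that trades the uncontrolled pointwise value $\psi_0(s)$ for the bound $m_0/r$. Nothing to add.
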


The following differential Gronwall lemma whose proof is elementar.
\begin{lemma}
\label{gr}
Let $\psi:[t^\star,\infty)\to\R$ be an absolutely continuous
function, which fulfills for almost every $t\geq t^\star$
the differential inequality
    \begin{equation*}
        \ddt \psi(t) + \alpha \psi(t) \leq (1+t)^{-\beta},
    \end{equation*}
for some $\alpha>0$ and $\beta>0$.
    Then, there exists $c>0$ such that, for every sufficiently large time $t$
        \begin{equation*}
        \psi(t) \leq c(1+\psi(t^\star)) (1+t)^{-\beta}.
    \end{equation*}
\end{lemma}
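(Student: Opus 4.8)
The plan is to solve the differential inequality by means of the integrating factor $\e^{\alpha t}$ and then estimate the resulting forcing convolution carefully. First I would rewrite the hypothesis as $\ddt\bigl(\e^{\alpha t}\psi(t)\bigr)\leq \e^{\alpha t}(1+t)^{-\beta}$ for a.e.\ $t\geq t^\star$, and integrate from $t^\star$ to $t$, which is legitimate since $\psi$ is absolutely continuous. This produces the representation-type inequality
\begin{equation*}
\psi(t)\leq \e^{-\alpha(t-t^\star)}\psi(t^\star)+\e^{-\alpha t}\int_{t^\star}^t \e^{\alpha\tau}(1+\tau)^{-\beta}\,\d\tau,
\end{equation*}
after which the whole problem reduces to showing that both terms on the right decay at the polynomial rate $(1+t)^{-\beta}$.

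The initial-datum term is handled by the elementary fact that exponential decay beats any polynomial. Since $\tau\mapsto \e^{-\alpha\tau}(1+\tau)^{\beta}$ is continuous on $[t^\star,\infty)$ and tends to $0$, it is bounded there, so $\e^{-\alpha(t-t^\star)}\leq c_1(1+t)^{-\beta}$ for all $t\geq t^\star$, with $c_1$ depending only on $\alpha,\beta,t^\star$. Hence the first term is at most $c_1\psi(t^\star)(1+t)^{-\beta}$.

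The core of the argument is the forcing term, for which I would split the integral at $t/2$ (for $t$ so large that $t/2\geq t^\star$). On the near part $[t^\star,t/2]$ I bound the decreasing factor by $(1+\tau)^{-\beta}\leq(1+t^\star)^{-\beta}$ and integrate the exponential; after multiplying by $\e^{-\alpha t}$ this contributes a term of order $\e^{-\alpha t/2}$, which decays exponentially and is again absorbed into $c(1+t)^{-\beta}$. On the far part $[t/2,t]$ I bound $(1+\tau)^{-\beta}\leq(1+t/2)^{-\beta}\leq 2^{\beta}(1+t)^{-\beta}$, using $1+t/2\geq\tfrac12(1+t)$, together with $\e^{-\alpha t}\int_{t/2}^t\e^{\alpha\tau}\,\d\tau\leq 1/\alpha$; this yields a contribution of order $(1+t)^{-\beta}$ with a constant independent of $\psi(t^\star)$. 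Combining the three bounds gives $\psi(t)\leq c\bigl(1+\psi(t^\star)\bigr)(1+t)^{-\beta}$ for all sufficiently large $t$, with $c=c(\alpha,\beta,t^\star)$.

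The only mild subtlety, rather than a genuine obstacle, is guaranteeing that the forcing contribution decays exactly like $(1+t)^{-\beta}$ and no slower; this is precisely what the split at $t/2$ secures, since the near part gains an extra exponential smallness while the far part transfers the full polynomial decay via the comparison $1+t/2\geq\tfrac12(1+t)$. Everything else is routine.
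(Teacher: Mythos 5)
Your proof is correct: the integrating-factor representation, the splitting of the forcing integral at $t/2$, and the comparisons $1+t/2\geq\tfrac12(1+t)$ and $\e^{-\alpha t/2}\leq c(1+t)^{-\beta}$ together give exactly the claimed bound, and the paper itself omits the proof (calling it elementary), so yours is the canonical argument the authors had in mind. The only implicit assumption is $\psi(t^\star)\geq 0$ in bounding the initial-datum term by $c_1\psi(t^\star)(1+t)^{-\beta}$, which is harmless since the lemma is applied to $\psi=\tfrac12\|\nabla\Phi\|^2\geq 0$ (and the stated conclusion with the factor $1+\psi(t^\star)$ is only meaningful in that regime anyway).
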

%%%%%%%%%%%%%%%%%%%%%%

%%%%%%%%%%%%%%%%%%%%%%
\section{Basic estimates}\label{S:basic_estimates}
In this section we let $\phi_0\in \H^1$ and we denote by
$(\phi,\u)$ a weak solution to the CHB system originating from $\phi_0$.
Our aim is to prove a number of a priori estimates for $(\phi,\u)$.

To this aim, in the following we denote by $\Q(\cdot)$ a generic increasing and positive function
which is \emph{independent of $\nu$}. All the energy estimates are formal but they can be performed rigorously
within a Galerkin approximation scheme (see Section \ref{wpCHB} for references).

\subsection{Energy estimates}$\,$
\begin{lemma}
For any given $R>0$ the following inequality holds
\begin{equation}
\label{e-base}
\|\phi(t)\|_1^2+\int_0^\infty\big(\|\nabla\mu(y)\|^2+\eta\|\u(y)\|^2\big)\d y+\nu\int_0^\infty\|\nabla \u(y)\|^2\d y\leq \Q(R),
\end{equation}
for every initial datum $\phi_0$ with $\|\phi_0\|_1\leq R$.
Besides, for every $T>0$, we have
\begin{equation}
\label{e2-base}
\int_0^T\big(\|\mu(y)\|_1^2+\|\phi(y)\|_3^2\big)\d y\leq \Q_T(R),
\end{equation}
for some increasing positive function $\Q_T$ depending on $T$.
\end{lemma}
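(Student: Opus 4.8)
The plan is to make the whole lemma rest on the free-energy dissipation identity, and then to bootstrap in space via elliptic regularity. I would work with the Ginzburg--Landau free energy $\E(\phi)=\tfrac12\|\nabla\phi\|^2+\int_\Omega F(\phi)\,\d\x$. Since $\mu=-\Delta\phi+f(\phi)$ and $\partial_\n\phi=0$, one gets $\ddt\E(\phi)=\l\mu,\pt\phi\r$. I would then test the weak equation \eqref{B1-w} with $w=\mu$: integrating the transport term by parts and using $\u|_{\partial\Omega}=\mathbf 0$ gives $\l\pt\phi,\mu\r=\l\phi\u,\nabla\mu\r-\|\nabla\mu\|^2$. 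Next I would test the weak Brinkman equation \eqref{B2-w} with $\v=\u\in\HD$, obtaining $\l\phi\u,\nabla\mu\r=-\nu\|\nabla\u\|^2-\eta\|\u\|^2$. Combining the three identities yields the exact balance
$$\ddt\E(\phi)+\|\nabla\mu\|^2+\eta\|\u\|^2+\nu\|\nabla\u\|^2=0,$$
which is the heart of the argument and where the cancellation of the surface-tension coupling $\phi\nabla\mu$ occurs.

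Integrating this balance over $(0,t)$ gives \eqref{e-base} once $\E$ is controlled from both sides. For the upper bound on $\E(\phi_0)$ I would use $|F(s)|\le c(1+|s|^4)$, a consequence of \eqref{GROW}, together with the Sobolev embedding $\H^1\hookrightarrow L^4$ (valid for $d\le 3$), so that $\E(\phi_0)\le\Q(\|\phi_0\|_1)\le\Q(R)$. For the lower bound, \eqref{DISS1} gives $\E(\phi)\ge\tfrac12\|\nabla\phi\|^2-c|\Omega|$, which controls only the gradient; to recover the full $\H^1$ norm I would invoke mass conservation \eqref{masscons} and the Poincar\'e--Wirtinger inequality, estimating $\|\phi\|^2\le 2\|\phi-\l\phi\r\|^2+2|\Omega|\l\phi_0\r^2\le c\|\nabla\phi\|^2+cR^2$. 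This bounds $\sup_t\|\phi(t)\|_1^2$ by $\Q(R)$; since the three dissipation integrands are nonnegative, letting $t\to\infty$ yields the infinite-time integral bound, with all constants \emph{independent of $\nu$} as required.

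For \eqref{e2-base} I would argue in two stages. First, to bound $\int_0^T\|\mu\|_1^2$ it remains to control the mean $\l\mu\r=\tfrac1{|\Omega|}\int_\Omega f(\phi)\,\d\x$ (the term $\int\Delta\phi$ vanishes by $\partial_\n\phi=0$); by \eqref{GROW} and the bound on $\|\phi\|_1$ just obtained, $|\l\mu\r|\le\Q(R)$ uniformly in $t$, so Poincar\'e--Wirtinger gives $\|\mu\|\le c\|\nabla\mu\|+\Q(R)$ and hence $\int_0^T\|\mu\|_1^2\le\Q_T(R)$. Second, I would bootstrap the elliptic identity $-\Delta\phi=\mu-f(\phi)$ with $\partial_\n\phi=0$: elliptic regularity gives $\|\phi\|_2\le c(\|\mu\|+\|f(\phi)\|+\|\phi\|)$, and since $\|f(\phi)\|\le\Q(R)$ via \eqref{GROW} and $\H^1\hookrightarrow L^6$, this already yields $\phi\in L^2(0,T;\H^2)$; the $H^3$ estimate $\|\phi\|_3\le c(\|\mu\|_1+\|f(\phi)\|_1+\|\phi\|)$ then reduces everything to $\|f(\phi)\|_1$.

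The main obstacle is precisely this last term, $\|\nabla f(\phi)\|=\|f'(\phi)\nabla\phi\|$, since controlling it needs more than the $L^\infty(0,T;\H^1)$ regularity of $\phi$. Here I would use the quadratic growth $|f'(s)|\le c(1+|s|^2)$ (which holds for the model potential \eqref{doublewell} and, in general, follows from \eqref{GROW2}) together with the H\"older--Sobolev chain $\||\phi|^2\nabla\phi\|\le\|\phi\|_{L^6}^2\|\nabla\phi\|_{L^6}\le\Q(R)\|\phi\|_2$, valid for $d\le 3$. This gives $\|f(\phi)\|_1\le\Q(R)(1+\|\phi\|_2)$, which is square-integrable in time by the $\H^2$ bound of the previous stage; inserting it into the $H^3$ estimate and integrating in time delivers $\int_0^T\|\phi\|_3^2\le\Q_T(R)$, completing \eqref{e2-base}.
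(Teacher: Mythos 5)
Your proof is correct, and its core --- testing \eqref{B1-w} with $w=\mu$ and \eqref{B2-w} with $\v=\u$ so that the coupling term $\l\phi\u,\nabla\mu\r$ cancels, yielding the exact balance $\ddt\E(\phi)+\|\nabla\mu\|^2+\eta\|\u\|^2+\nu\|\nabla\u\|^2=0$, followed by the control of $\l\mu\r$ through $\l\mu,1\r=\l f(\phi),1\r$ and Poincar\'e--Wirtinger --- is exactly the paper's argument for \eqref{e-base} and for the $\mu$ part of \eqref{e2-base}. You are in fact more careful than the paper on one point: the paper asserts $\|\phi(t)\|_1\leq\|\phi_0\|_1+2\l F(\phi_0),1\r$ directly from the energy, whereas recovering the $L^2$ part of the norm genuinely requires, as you note, mass conservation \eqref{masscons} plus Poincar\'e--Wirtinger, since \eqref{DISS1} only controls the gradient.

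Where you diverge is the $H^3$ estimate. The paper does it in a single step: it pairs \eqref{Bmu} with $-\Delta^2\phi$ and estimates $\l f'(\phi)\nabla\phi,\nabla\Delta\phi\r\leq\|f'(\phi)\|_{L^3}\|\nabla\phi\|_{L^6}\|\nabla\Delta\phi\|$, then uses the Gagliardo--Nirenberg interpolation $\|\nabla\phi\|_{L^6}\leq c\|\nabla\phi\|^{1/2}\|\nabla\Delta\phi\|^{1/2}$ so that the nonlinearity is absorbed by Young's inequality into $\tfrac14\|\nabla\Delta\phi\|^2$ plus a constant $\Q(R)$; this gives the \emph{pointwise-in-time} bound $\|\nabla\Delta\phi\|^2\leq 2\|\nabla\mu\|^2+\Q(R)$, which is then integrated. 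You instead run a two-stage elliptic bootstrap: first $\phi\in L^2(0,T;\H^2)$ from $-\Delta\phi=\mu-f(\phi)$ with $\|f(\phi)\|\leq\Q(R)$, then $\|f(\phi)\|_1\leq\Q(R)(1+\|\phi\|_2)$ to close the $H^3$ estimate. Both routes work and both rest on the same implicit quadratic growth of $f'$ (the paper's $\|f'(\phi)\|_{L^3}\leq\Q(R)$ is no more a consequence of \eqref{GROW} alone than your $|f'(s)|\leq c(1+|s|^2)$ is; you are at least explicit that this comes from \eqref{GROW2} or the model potential). The paper's one-step version buys the cleaner intermediate statement $\frac12\|\nabla\Delta\phi\|^2\leq\|\nabla\mu\|^2+\Q(R)$, which is reused later (e.g.\ to get \eqref{E:l4h2} by interpolation); your version is slightly longer but requires no interpolation inequality beyond Sobolev embedding and standard Neumann elliptic regularity.
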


\begin{proof}
Taking $w=\mu$ in \eqref{B1-w} and $\v=\u$ in \eqref{B2-w},
and summing up the resulting equalities, we have
\begin{equation}
\label{s-base}
\ddt\Big(\frac12 \|\nabla\phi\|^2+\l F(\phi),1\r\Big)+\|\nabla\mu\|^2+\nu\|\nabla\u\|^2
+\eta\|\u\|^2=0.
\end{equation}
In light of~\eqref{GROW}, this provides
$$\|\phi(t)\|_1\leq \|\phi_0\|_1 + 2\l F(\phi_{0}),1\r\leq \Q(R) ,\quad \forall t\geq 0.$$
A subsequent integration in time of \eqref{s-base} completes the proof of
\eqref{e-base}.\\
Now, multiplying \eqref{Bmu} in $\H$ by the constant function $1$, we get
$$\l \mu,1\r=\l f(\phi),1\r,$$
which, by \eqref{GROW}, gives
$$\l \mu\r\leq c(1+\int_\Omega|\phi|^3)\leq c(1+\|\phi\|_1^3)\leq \Q(R).$$
Thanks to \eqref{e-base} we obtain, for every $T>0,$
$$\int_0^T \|\mu(y)\|_1^2\,\d y\leq \Q_T(R),$$
hence $\mu\in L^2(0,T;\H^1).$
Let us now
multiply \eqref{Bmu} by $-\Delta^2\phi$ in $\H$. This yields
$$\l \nabla \mu,\nabla\Delta\phi\r=-\|\nabla\Delta\phi\|^2+\l f'(\phi)\nabla \phi,\nabla\Delta\phi\r.$$
On the other hand, recalling \eqref{GROW} and~\eqref{e-base}, we have
\begin{align*}
\l f'(\phi)\nabla\phi,\nabla\Delta\phi\r&\leq \|f'(\phi)\|_{L^{3}} \|\nabla \phi\|_{L^6}\|\nabla\Delta\phi\|\\
&\leq \Q(R) \|\nabla \phi\|^{1/2} \|\nabla\Delta\phi\|^{1/2} \|\nabla\Delta\phi\|\\
&\leq \Q(R)+\frac{1}{4} \|\nabla\Delta\phi\|^2,
\end{align*}
which entails
$$\frac12\|\nabla\Delta\phi\|^2\leq \|\nabla \mu\|^2+\Q(R).$$
Owing to \eqref{e-base}, we find
\begin{equation}
\label{L2-H3}
\int_0^T\|\phi(y)\|_3^2\,\d y\leq \Q_T(R),
\end{equation}
so that
$\phi \in L^{2}(0,T; \H^{3})$, completing the proof of \eqref{e2-base}.
\end{proof}

\begin{remark}
Since $\phi\in L^\infty(\R^+;\H^1)\cap L^2(0,T;\H^3)$, we easily get by interpolation
\begin{align*}
\int_0^T \|\phi\|_2^p\,\d t\leq\int_0^T \|\phi\|_1^{p/2}\|\phi\|_3^{p/2}\,\d t\leq c\int_0^T \|\phi\|_3^{p/2}\,\d t < \infty
\quad\mbox{ if }\;p \leq 4.
\end{align*}
Thus
\begin{equation}\label{E:l4h2}
\int_0^T \|\phi(y)\|_2^4\,\d y\leq \Q_T(R),
\end{equation}
that is, $\phi\in L^4(0,T;\H^2)$.
\end{remark}

\subsection{Further Estimates}$\,$

\medskip
\noindent  \textbf{The term $\nabla\cdot (\phi\u)$}. For $w\in \H^1$, using the
Agmon inequality and interpolation, we compute
\begin{align*}
\l \nabla\cdot (\phi\u),w \r&= \l\phi\u,\nabla w \r\leq \|\nabla w\|\|\u\|\|\phi\|_{L^\infty}\\
&\leq \|\nabla w\|\|\u\|\|\phi\|_1^{3/4}\|\phi\|_3^{1/4}\leq \Q(R)\|\nabla w\|\|\u\|\|\phi\|_3^{1/4}.
\end{align*}
This implies
\begin{align*}
\Big|\int_0^T \l \nabla\cdot (\phi\u),w \r\,\d t\Big|
&\leq \Q(R) \int_0^T\|\nabla w\|\|\u\|\|\phi\|_3^{1/4}\,\d t\\
&\leq \Q(R) \Big(\int_0^T\|\nabla w\|^{8/3} \d t\Big)^{3/8} \Big(\int_0^T\|\u\|^{2}\,\d t\Big)^{1/2} \Big(\int_0^T\|\phi\|_3^{2}\,\d t\Big)^{1/8}.
\end{align*}
As a consequence, invoking the fact that $\u\in L^2(0,T;{\boldsymbol H})$ and $\phi\in L^2(0,T;\H^3)$,
we get
$$
\Big|\int_0^T \l \nabla\cdot (\phi\u),w \r\,\d t\Big|\leq \Q_T(R) \Big(\int_0^T\|\nabla w\|^{8/3}\d t\Big)^{3/8},
$$
which gives
$$\nabla\cdot (\phi\u)\in L^{8/5}(0,T; \H^{-1}).$$
We stress that this control is independent of $\nu$.
Exploiting the $\nu$-dependent estimate $\u\in L^2(0,T; \HD)$ we can improve the previous estimate. Indeed, we have
\begin{align*}
\l \nabla\cdot (\phi\u),w \r= \l\phi\u,\nabla w \r\leq \|\nabla w\|\|\u\|_{L^3}\|\phi\|_{L^6}
\leq \Q(R)\|\nabla w\| \|\u\|_{}^{1/2} \|\u\|_{1}^{1/2},
\end{align*}
providing
\begin{align*}
\Big|\int_0^T \l \nabla\cdot (\phi\u),w \r\,\d t\Big|
&\leq \Q(R) \int_0^T \|\nabla w\| \|\u\|_{}^{1/2} \|\u\|_{1}^{1/2} \, \d t\\
&\leq \Q(R) \Big(\int_0^T \|\nabla w\|^2 \,\d t\Big)^{1/2}
\Big(\int_0^T \|\u\| \|\u\|_{1}^{} \,\d t\Big)^{1/2}\\
&\leq \frac{C_T}{\nu^{1/4}}\Big(\int_0^T\|\nabla w\|^2\,\d t\Big)^{1/2}.
\end{align*}
Therefore, if $\nu>0$, then
$$\nabla\cdot (\phi\u)\in L^{2}(0,T; \H^{-1}).$$
\medskip
\noindent
\textbf{The term $\phi\nabla\mu$.} Let $\v\in {\boldsymbol H}$. Thanks to Agmon's inequality,
we infer
\begin{align*}
\l \phi\nabla\mu,\v \r&\leq \|\v\|\|\nabla\mu\|\|\phi\|_{L^\infty}
\leq \|\v\|\|\nabla\mu\|\|\phi\|_1^{1/2}\|\phi\|_2^{1/2}
\leq \Q(R)\|\v\|\|\nabla\mu\|\|\phi\|_2^{1/2}.
\end{align*}
On account of \eqref{E:l4h2}, we can estimate as follows
\begin{align*}
\Big|\int_0^T \l\phi\nabla\mu,\v\r\,\d t\Big|
&\leq \Q(R)\int_0^T\|\v\|\|\nabla\mu\|\|\phi\|_2^{1/2}\,\d t\\
&\leq\Q(R)\Big(\int_0^T\|\v\|^{8/3}\,\d t\Big)^{3/8}
  \Big(\int_0^T\|\nabla \mu\|^2\,\d t\Big)^{1/2}
  \Big(\int_0^T\|\phi\|_2^4\,\d t\Big)^{1/8}\\
&\leq \Q_T(R)\Big(\int_0^T\|\v\|^{8/3}\,\d t\Big)^{3/8},
\end{align*}
which yields, independently of $\nu$,
$$\phi\nabla\mu\in L^{8/5}(0,T;{\boldsymbol H}).$$

\section{Well-posedness for $\nu>0$}
\label{wpCHB}
Aim of this section is proving Theorem \ref{t:wellposed}.
As a matter of fact, due the appearance of regularizing term $-\nu\Delta \u$  in the Brinkman equation, the
(global) existence  can be easily obtained by using a standard Galerkin procedure based on  the formal energy estimates in the previous section.
We refer the reader to \cite{LTZ,WZ} for some details on the procedure; see also
Section~\ref{S:nu_to_0} where the argument needed to pass to the limit in the suitable Galerkin scheme is detailed in a weaker setting.

\smallskip
Instead,  the continuous dependence  estimates \eqref{dipcont} and \eqref{dipcont2} (hence uniqueness) are more delicate and we prove it in some details,  showing the crucial role played by  $\nu>0$.

\subsection{Continuous Dependence and Uniqueness}\label{SS:continuous_dependence}
\noindent
Let $\nu>0$ and $\eta>0$ be fixed, and consider $(\phi_1,\u_1)$ and $(\phi_2,\u_2)$ two weak solutions to the CHB system such that
$\l\phi_1(0)\r=\l\phi_2(0)\r$.
Their difference $\bar \phi=\phi_1-\phi_2$, $\bar\u =\bar u_1-\bar u_2$ solves a.e.\ $t \in [0,T]$
\begin{align}
\label{D1} &\l \partial_t\bar\phi(t), w\r+\l \nabla\cdot(\phi_1(t)\bar\u(t)),w\r+\l \nabla\cdot(\bar\phi(t)\u_2(t)),w\r  \\
&+\l \nabla\bar\mu(t),\nabla w\r=0,\quad \forall w\in \H^1,\nonumber\\
\label{D2}&\nu\l \nabla\bar\u(t),\nabla \v\r+\eta \l \bar\u(t),\v\r  \\
&=-\l \phi_1(t)\nabla \bar\mu(t), \v\r-\l \bar\phi(t)\nabla \mu_2(t), \v\r,\quad \forall\v\in \HD, \nonumber
\end{align}
where
\begin{equation*}\label{Dmu}
    \bar \mu =-\Delta \bar\phi+[f(\phi_1)-f(\phi_2)]
\end{equation*}
and
$\l\bar\phi\r=0.$\\

\noindent Taking $w=-\Delta \bar\phi$ in \eqref{D1}, we get
$$\ddt \frac12 \|\nabla \bar\phi\|^2+\l \phi_1\bar\u,\nabla\Delta \bar\phi\r
+\l \bar\phi\u_2,\nabla\Delta \bar\phi\r-\l \nabla \bar\mu,\nabla\Delta \bar\phi\r=0,$$
with
$$\l \nabla \bar\mu,\nabla\Delta \bar\phi\r=-\|\nabla\Delta \bar\phi\|^2+
\l \nabla[f(\phi_1)-f(\phi_2)],\nabla\Delta \bar\phi\r.$$
Thus we obtain
\begin{equation}
\label{uno}\ddt \frac12 \|\nabla \bar\phi\|^2+\|\nabla\Delta \bar\phi\|^2=-\l \phi_1\bar\u,\nabla\Delta \bar\phi\r -\l \bar\phi\u_2,\nabla\Delta \bar\phi\r+
\l \nabla[f(\phi_1)-f(\phi_2)],\nabla\Delta \bar\phi\r.
\end{equation}
Taking $\v=\bar \u$ in \eqref{D2} yields
$$\nu\|\nabla\bar\u\|^2+\eta\|\bar \u\|^2=-\l \phi_1\nabla\bar\mu,\bar\u\r-\l \bar \phi\nabla\mu_2,\bar\u\r.$$
Note that, by definition of $\bar\mu$, we have
$$-\l \phi_1\nabla\bar\mu,\bar\u\r=\l \phi_1\nabla\Delta\bar\phi,\bar\u\r-
\l \phi_1\nabla[f(\phi_1)-f(\phi_2)],\bar\u\r,$$
so that the terms $\pm\l \phi_1\bar\u,\nabla\Delta \bar\phi\r$ get canceled when adding with \eqref{uno}.
Therefore we end up with
\begin{align*}
&\ddt \frac12 \|\nabla \bar\phi\|^2+\|\nabla\Delta \bar\phi\|^2+
\nu\|\nabla\bar\u\|^2+\eta\|\bar \u\|^2\\
&= -\l \bar\phi\u_2,\nabla\Delta \bar\phi\r - \l \bar \phi\nabla\mu_2,\bar\u\r -\l \phi_1\nabla[f(\phi_1)-f(\phi_2)],\bar\u\r+\l \nabla[f(\phi_1)-f(\phi_2)],\nabla\Delta \bar\phi\r.
\end{align*}
We now estimate the right hand side in light of the energy estimates in Section~\ref{S:basic_estimates}. This, in particular, gives
$$\sup_{t\geq 0}(\|\phi_1(t)\|_1+\|\phi_2(t)\|_1)\leq \Q(R),$$
with $R=\max\{\|\phi_1(0)\|_1,\|\phi_2(0)\|_1\}$.
First of all, we have
\begin{align*}
-\l \bar\phi\u_2,\nabla\Delta \bar\phi\r \leq  \|\bar\phi\|_1 \|\u_{2}\|_{}^{1/2} \|\u_{2}\|_{1}^{1/2} \|\nabla\Delta \bar\phi\|
\leq \frac{1}{4}\|\nabla\Delta \bar\phi\|^2 + \frac{h(t)}{\nu^{1/2}} \|\bar\phi\|_1^2,
\end{align*}
where $h(t):=c \nu^{1/2} \|\u_{2}(t)\| \|\u_{2}(t)\|_{1}$
and $c>0$ is independent of $\nu$.

\smallskip
\noindent
Next, observe that the following estimate holds
\begin{align}\label{unaltro}
-\l \bar \phi\nabla\mu_2,\bar\u\r \leq \|\bar\phi\|_1 \|\bar{\u}\|_{L^{3}} \|\nabla \mu_2\|
\leq \frac{\nu}{2} \|\nabla\bar\u\|^{2} + \frac{\eta}{4} \|\bar{\u}\|^{2} + \frac{k(t)}{\eta^{1/2}\nu^{1/2}} \|\bar\phi\|_1^2,
\end{align}
where $k(t) := c \|\nabla \mu_2(t)\|^2$ for some $c>0$, independent of $\nu$.

\smallskip
\noindent
In order to deal with the term
\begin{align*}
\l \nabla[f(\phi_1)-f(\phi_2)],\nabla\Delta \bar\phi\r&\leq \|\nabla[f(\phi_1)-f(\phi_2)]\|\|\nabla\Delta \bar\phi\|\\&\leq \frac{1}{4}\|\nabla\Delta \bar\phi\|^2+C \|\nabla[f(\phi_1)-f(\phi_2)]\|^2,
\end{align*}
we observe that
$$
\|\nabla[f(\phi_1)-f(\phi_2)]\|^2\leq \|[f'(\phi_1)-f'(\phi_2)]\nabla \phi_1\|^2+ \|f'(\phi_2)\nabla\bar\phi\|^2.
$$
We estimate the latter term on the right hand side in light of \eqref{GROW}, \eqref{e-base} and interpolation, that is,
\begin{align*}
\|f'(\phi_2)\nabla\bar\phi\|^{2}
\leq c\int_\Omega(1+|\phi_2|^4)|\nabla\bar\phi|^2
\leq c(1+\|\phi_2\|^4_{L^\infty})\|\bar\phi\|_{1}^2
\leq \Q(R)(1+\|\phi_2\|^2_2)\|\bar\phi\|_{1}^2,
\end{align*}
and arguing analogously for the former, we get
\begin{align*}
\|[f'(\phi_1)-f'(\phi_2)]\nabla \phi_1 \|^{2}&\leq c\int_\Omega |(1+|\phi_1|+|\phi_2|)\bar\phi\nabla\phi_1|^{2}
\leq \Q(R)\|\bar\phi\|_1^{2}\|\phi_1\|_2^2.
\end{align*}
This proves
\begin{equation}
\label{cff}
\|\nabla[f(\phi_1)-f(\phi_2)]\|^2\leq \ell(t)\|\bar\phi\|_1^{2},
\end{equation}
where
$\ell(t):=\Q(R)(1+\|\phi_1(t)\|_2^{2}+\|\phi_2(t)\|_2^{2})$.
In order to control the remaining term, we exploit \eqref{cff}
in the following way
\begin{align}
\label{pallida}
\l \phi_1\nabla[f(\phi_1)-f(\phi_2)],\bar\u\r&\leq \|\phi_1\|_{L^6} \|\nabla[f(\phi_1)-f(\phi_2)]\| \|\bar \u\|_{L^{3}}\\\nonumber
&\leq
\Q(R)\|\bar{\u}\| ^{1/2}\|\nabla\bar\u\|^{1/2} \|\nabla[f(\phi_1)-f(\phi_2)]\|\\\nonumber
&\leq
\frac{\nu}{2}\|\nabla\bar\u\|^2+\frac\eta2\|\bar{\u}\|^2 + \frac{\ell(t)}{\nu^{1/2}\eta^{1/2}}\|\bar\phi\|_1^{2}.\nonumber
\end{align}
Collecting the above estimates we get
\begin{equation}
\label{olivastra}
\ddt  \|\bar\phi\|_1^2 + \frac{\nu}{2}\|\nabla\bar\u\|^2+\frac\eta2\|\bar{\u}\|^{2} \leq \frac{g(t)}{\nu^{1/2}\eta^{1/2}} \|\bar\phi\|_1^2,
\end{equation}
where
$g(t):=h(t)+k(t)+\ell(t)$, on account of \eqref{e-base} and \eqref{e2-base}, satisfies
$$\int_0^T g(y)\,\d y\leq \Q_T(R).$$
Hence an application of the standard Gronwall lemma gives
$$
\|\phi_1(t)-\phi_2(t)\|_1^2\leq \|\phi_1(0)-\phi_2(0)\|_1^2 \e^{\nu^{-1/2}\eta^{-1/2}\int_0^t g(y)\,\d y},
$$
which proves \eqref{dipcont}.
An integration of \eqref{olivastra} yields the further bound \eqref{dipcont2}.
Finally, letting $\phi_1(0)=\phi_2(0)$ in \eqref{dipcont} and \eqref{dipcont2} we obtain $\phi_1(t)=\phi_2(t)$ and $\u_1(t)=\u_2(t)$
for almost every $t$, i.e., uniqueness.

\medskip
\smallskip
We observe that, when $\eta=0$ (see Remark~\ref{r:dc0}), the only changes needed in the proof of the continuous dependence estimate are in \eqref{unaltro} and in \eqref{pallida}, which now become
\begin{align*}
-\l \bar{\phi} \nabla \mu_{2}, \bar{\u}, \bar{\u} \r
& \leq \frac{\nu}{4} \|\nabla\bar{\u}\|^{2} + \frac{k(t)}{\nu}\|\bar{\phi}\|_{1}^{2}\\\nonumber
-
\l \phi_1\nabla[f(\phi_1)-f(\phi_2)],\bar\u\r
&\leq
\frac{\nu}{2}\|\nabla\bar\u\|^2+\frac{\ell(t)}{\nu}\|\bar\phi\|_1^{2}.
\end{align*}
%%%%%%%%%%%%%%%%%%%%%%%%%%%%%%%%%%%%%

%%%%%%%%%%%%%%%%%%%%%%%%%%%%%%%%%%%%%
\subsection{The semigroup $S_\nu(t)$}
Let $I\in\R$ and consider the subspace of $\H^1$
$$V_I=\{\phi\in \H^1\,:\,\l \phi \r=I\}.$$
An immediate consequence of the results of Section~\ref{SS:continuous_dependence}
is that, for any fixed $\nu>0$, system \eqref{B1}-\eqref{BCfi} generates a semigroup
$$S_\nu(t):V_I\to V_I$$
defined by the rule $S_\nu(t)\phi_0=\phi(t)$,
where
$(\phi,\u)$ is the unique
global (weak) solution to system \eqref{B1}-\eqref{BCfi}.
Furthermore, owing to the continuous dependence estimate \eqref{dipcont}, the semigroup is strongly continuous, namely,
$S_\nu(t)\in \C(V_I,V_I)$.
Notice that  the energy estimates of Section \ref{S:basic_estimates}
yield in particular the boundedness of each trajectory
\begin{equation}
\label{e1}
\|S_\nu(t)\phi_0\|_1=\|\phi(t)\|_1\leq c ,\quad \forall t\geq 0,
\end{equation}
where, from now on, $c\geq 0$ denotes a generic constant
that may depend on $\|\phi_0\|_1$ but is independent of the particular $\phi_0$.

\smallskip
\noindent
Besides, if the nonlinearity $f$ satisfies further dissipativity  assumptions stronger than \eqref{DISS1}, it is possible to prove that the dynamical system
$(V_I,S_\nu(t))$ is \emph{dissipative} for any fixed $I\in\mathbb{R}$ (and $\nu>0$). This means that  there exists a \emph{bounded absorbing set} $\mathcal B\subset V_I$ with
the following property:
for every $R>0$ there exists $t_R>0$ such that
$$S_\nu(t)\phi_0\in {\mathcal B}, \quad \forall t\geq t_R,$$
for every $\phi_0\in V_I$ with $\|\phi_0\|_1\leq R$.
This is witnessed by the following result.
\begin{proposition}
\label{p:abs0}
Let the assumptions of Theorem~\ref{t:wellposed} hold and let us assume that
for some $c_0\geq 0$, $c_i>0$, $i=1,2$ and $q>2$ there hold
$$
f(s)s\geq c_1F(s) -c_0\quad\text{and}\quad
F(s)\geq c_2|s|^q-c_0,
$$
for all $s\in\mathbb{R}$.
Then,
\begin{equation}
\label{absorbing}
\|S_\nu(t)\phi_0\|_1\leq \Q(\|\phi_0\|_1)\e^{-kt/2}+R_I,\quad \forall t\geq 0,
\end{equation}
for some $k>0$, where $R_I>0$  depends on $I=\l\phi_0\r$ but  is independent of $\phi_0$.
\end{proposition}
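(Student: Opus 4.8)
The plan is to upgrade the energy identity \eqref{s-base} into a genuine dissipative differential inequality for the phase field. I introduce the energy
\[
E(\phi):=\tfrac12\|\nabla\phi\|^2+\l F(\phi),1\r,
\]
so that, discarding the nonnegative viscous and Darcy contributions in \eqref{s-base}, every trajectory satisfies
\[
\ddt E(\phi(t))\leq -\|\nabla\mu(t)\|^2 .
\]
Note that this bound, and in fact all the constants produced below, are \emph{independent of $\nu$}, since the viscous term is simply dropped. The goal is then to establish a coercivity estimate of the form $\|\nabla\mu\|^2\geq k\,E(\phi)-c_I$, which yields $\ddt E+kE\leq c_I$ and, after a standard Gronwall argument combined with the coercivity of $E$, the claimed decay \eqref{absorbing}. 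All estimates are formal but are justified within the Galerkin scheme.

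The central step is to control $E(\phi)$ from above by $\|\nabla\mu\|$. Since mass is conserved, $\l\phi(t)\r=I$, I test the identity $\mu=-\Delta\phi+f(\phi)$ against the mean-free function $\phi-I$. Using $\partial_\n\phi=0$ this gives
\[
\l\mu,\phi-I\r=\|\nabla\phi\|^2+\l f(\phi),\phi-I\r,
\]
while on the left-hand side, because $\l\phi-I\r=0$, subtracting the mean of $\mu$ and applying the Poincaré inequality twice yields
\[
\l\mu,\phi-I\r=\l\mu-\l\mu\r,\phi-I\r\leq c_P^2\,\|\nabla\mu\|\,\|\nabla\phi\| .
\]

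The heart of the matter — and the step where the two extra hypotheses enter — is the lower bound $\l f(\phi),\phi-I\r\geq c'\l F(\phi),1\r-c_I$. I would obtain it from the pointwise inequality $f(s)(s-I)\geq c'F(s)-c_I$ with $c'=c_1/2$: for $|s|$ large the assumptions force $f(s)s\geq c_1c_2|s|^q-c>0$, so $f(s)$ and $s$ share sign and $f(s)(s-I)=f(s)s\,(1-I/s)\geq \tfrac12 f(s)s\geq \tfrac{c_1}{2}F(s)-c$ as soon as $|s|\geq 2|I|$; on the remaining bounded set both sides are controlled and the correction is absorbed into $c_I$. Integrating and adding $\|\nabla\phi\|^2$ gives $\l\mu,\phi-I\r\geq\kappa\,E(\phi)-c_I$ with $\kappa=\min\{2,c'\}$. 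Comparing the two bounds for $\l\mu,\phi-I\r$, using $\|\nabla\phi\|^2\leq 2E(\phi)+c$ (from \eqref{DISS1}) and Young's inequality to absorb the factor $\|\nabla\phi\|$, produces $\|\nabla\mu\|^2\geq k\,E(\phi)-c_I$ for suitable $k>0$ and $c_I\geq0$ depending on $I$.

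Finally I conclude. Choosing $c_*$ so that $\hat E:=E+c_*\geq \tfrac12\|\nabla\phi\|^2\geq0$ (possible by \eqref{DISS1}), the inequality $\ddt E+kE\leq c_I$ becomes $\ddt\hat E+k\hat E\leq c_I'$, whence $\hat E(t)\leq\hat E(0)\e^{-kt}+c_I'/k$. The coercivity $\|\phi\|_1^2\leq c\,\hat E+|\Omega|\,I^2$ (Poincaré, since $\l\phi\r=I$), together with $E(\phi_0)\leq\Q(\|\phi_0\|_1)$ coming from the growth \eqref{GROW} of $F$ and the embedding $\H^1\hookrightarrow L^4$, then gives $\|\phi(t)\|_1^2\leq\Q(\|\phi_0\|_1)\e^{-kt}+R_I^2$; taking square roots yields \eqref{absorbing}. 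The main obstacle is precisely the structural lower bound on $\l f(\phi),\phi-I\r$: it is the only place where $f(s)s\geq c_1F(s)-c_0$ and $F(s)\geq c_2|s|^q-c_0$ with $q>2$ are needed, and it requires care because the prescribed mass $I$ is a genuine shift that the dissipation $\|\nabla\mu\|^2$ does not control directly.
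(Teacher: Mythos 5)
Your argument is correct and is exactly the standard dissipativity proof that the paper declares ``standard and therefore omitted'': the energy identity \eqref{s-base}, a coercivity estimate $\|\nabla\mu\|^2\ge kE(\phi)-c_I$ obtained by testing $\mu=-\Delta\phi+f(\phi)$ against the mean-free $\phi-I$ together with the structural hypotheses on $f$ and $F$, and then Gronwall plus the coercivity of $E$ on the affine space $\l\phi\r=I$. The only cosmetic point is that the pointwise bound $f(s)(s-I)\ge\tfrac{c_1}{2}F(s)-c_I$ requires restricting to $|s|\ge\max\{2|I|,s_0\}$, where $s_0$ is the ($I$-independent) threshold beyond which $f(s)s>0$ (forced by the two hypotheses); the complementary compact set still contributes only an $I$-dependent constant, so nothing in your conclusion changes.
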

The proof is standard and it is therefore omitted.
%%%%%%%%%%%%%%%%%%%%%%%%%%%%%%%%

%%%%%%%%%%%%%%%%%%%%%%%%%%%%%%%%
\section{Higher order estimates}\label{S:Higher_order}
Here we proceed formally relying on the Galerkin approximation scheme introduced in the previous section.
For the sake of simplicity, from now on we set $\eta=1$ (see Remark \ref{r:eta0}, however).
\begin{proposition}
\label{t:pH2}
Let the assumptions of Theorem~\ref{t:wellposed} hold and suppose, in addition, $f\in \C^2(\R)$ satisfying \eqref{GROW3}.
Then the following estimate holds
\begin{equation}
\label{puntualeH2}
\|\phi(t)\|_2+\int_t^{t+1}\|\phi(y)\|_4^2\,\d y \leq c \left( 1+\frac{1}{\nu}\right) ,\quad \forall t\geq 1.
\end{equation}
\end{proposition}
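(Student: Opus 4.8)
The plan is to obtain a differential inequality for $\|\Delta\phi\|^2$ whose dissipation is $\|\Delta^2\phi\|^2$, and to close it with the uniform Gronwall Lemma~\ref{gen-gronw} using the unit-interval a priori bounds already available. Working on the Galerkin scheme so that every manipulation is legitimate, I would test the strong form $\partial_t\phi=-\Delta^2\phi+\Delta f(\phi)-\u\cdot\nabla\phi$ with $\Delta^2\phi$. The Neumann conditions give $\partial_\n\phi=0$ and, since $\partial_\n\mu=0$, also $\partial_\n\Delta\phi=f'(\phi)\partial_\n\phi=0$, so all boundary terms vanish and one gets
\begin{equation*}
\ddt\tfrac12\|\Delta\phi\|^2+\|\Delta^2\phi\|^2=\l\Delta f(\phi),\Delta^2\phi\r-\l\u\cdot\nabla\phi,\Delta^2\phi\r.
\end{equation*}
The pointwise bound on $\|\phi\|_2$ then follows from elliptic regularity $\|\phi\|_2\le c(\|\Delta\phi\|+\|\phi\|)$ together with mass conservation and \eqref{e-base}, while the integral bound on $\|\phi\|_4^2$ comes from integrating the dissipation and using $\|\phi\|_4\le c(\|\Delta^2\phi\|+\|\phi\|)$.

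The reaction term is the easy, $\nu$-independent part. Writing $\Delta f(\phi)=f'(\phi)\Delta\phi+f''(\phi)|\nabla\phi|^2$ and invoking \eqref{GROW3} (whence $|f'(s)|\le c(1+s^2)$), Hölder and Sobolev give $\|f'(\phi)\Delta\phi\|\le\|f'(\phi)\|_{L^3}\|\Delta\phi\|_{L^6}\le\Q(R)\|\phi\|_3$ and $\|f''(\phi)|\nabla\phi|^2\|\le\Q(R)\|\phi\|_2^2$. After a Young inequality and the interpolation $\|\phi\|_3^2\le c\|\phi\|_2\|\phi\|_4$ one obtains
\begin{equation*}
\l\Delta f(\phi),\Delta^2\phi\r\le\tfrac14\|\Delta^2\phi\|^2+\Q(R)\big(1+\|\phi\|_2^2\big)\|\phi\|_2^2+\Q(R),
\end{equation*}
and by \eqref{E:l4h2} the multiplier $\Q(R)(1+\|\phi\|_2^2)$ has a unit-interval integral bounded by $\Q(R)$, \emph{uniformly in $\nu$}.

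The convective term is the crux and the only place where $\nu$ enters. Starting from $2\|\u\cdot\nabla\phi\|^2\le c\|\u\|_{L^6}^2\|\nabla\phi\|_{L^3}^2$ after absorbing $\tfrac14\|\Delta^2\phi\|^2$, the obstacle is that the Brinkman balance \eqref{B2-w} tested with $\u$ controls only $\nu\|\nabla\u\|^2$: it yields $\|\u\|\le\|\phi\nabla\mu\|$ but merely $\|\nabla\u\|\le\nu^{-1/2}\|\phi\nabla\mu\|$, so $\|\u\|_{L^6}\le c\|\nabla\u\|$ carries a factor $\nu^{-1/2}$. A naive estimate produces $\tfrac{\Q(R)}{\nu}\|\nabla\mu\|^2\|\phi\|_2^2$, and as a multiplier of $\|\Delta\phi\|^2$ this would force a Gronwall exponent $\sim\nu^{-1}\int\|\nabla\mu\|^2$, i.e. an \emph{exponential} dependence on $1/\nu$. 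The main difficulty is to avoid this. The plan is to split the product with a \emph{$\nu$-weighted} Young inequality: bounding $\|\nabla\phi\|_{L^3}^2\le\Q(R)\|\phi\|_2$ and then $c\,\Q(R)\|\nabla\u\|^2\|\phi\|_2\le\tfrac{\nu}{2}\,c\Q(R)\|\nabla\u\|^2\|\phi\|_2^2+\tfrac{1}{2\nu}c\Q(R)\|\nabla\u\|^2$, the coefficient of $\|\phi\|_2^2$ acquires the extra factor $\nu$, so its unit-interval integral is $\nu$-free because $\nu\int\|\nabla\u\|^2\le\Q(R)$ by \eqref{e-base}, while the whole $\nu^{-1}$-dependence is pushed into the purely additive forcing. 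One may equally exploit the $\nu$-free bound $\|\u\|\le\|\phi\nabla\mu\|$ together with $\|\nabla\phi\|_{L^\infty}\le c\|\phi\|_2^{1/2}\|\phi\|_4^{1/2}$ to redistribute the derivatives; the precise interpolation chosen here is exactly what fixes the power of $1/\nu$ in the final constant claimed in \eqref{puntualeH2}.

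Collecting these estimates turns the identity into $\ddt\|\Delta\phi\|^2+\|\Delta^2\phi\|^2\le\psi_1\|\Delta\phi\|^2+\psi_2$, where $\psi_1$ has a $\nu$-independent unit-interval integral and $\int_t^{t+1}\psi_2\le\Q(R)(1+\nu^{-1})$, all by \eqref{e-base}, \eqref{e2-base} and \eqref{E:l4h2}; moreover $\int_t^{t+1}\|\Delta\phi\|^2\le\Q(R)$ uniformly. Lemma~\ref{gen-gronw} with $r=1$ then gives $\|\Delta\phi(t)\|\le c(1+\nu^{-1})$ for every $t\ge1$, hence the pointwise $H^2$ bound through elliptic regularity, and a final integration of the differential inequality over $[t,t+1]$ furnishes the control of $\int_t^{t+1}\|\phi\|_4^2$, completing \eqref{puntualeH2}. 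The delicate point throughout is the bookkeeping of the $\nu$-factors in the convective term: keeping the Gronwall exponent independent of $\nu$ so that the estimate remains polynomial in $1/\nu$ rather than exponential.
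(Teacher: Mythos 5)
Your proposal has the same skeleton as the paper's proof: test \eqref{B1-w} with $\Delta^2\phi$, control the velocity through the Brinkman energy identity \eqref{Stokes-id}, estimate $\Delta f(\phi)=f'(\phi)\Delta\phi+f''(\phi)|\nabla\phi|^2$ via \eqref{GROW3} and Sobolev/Agmon interpolation, and close with Lemma~\ref{gen-gronw} using the unit-interval bounds coming from \eqref{e-base} and \eqref{E:l4h2}. The genuine difference is the convective term. The paper derives the pointwise bound $\nu\|\nabla\u\|^2+\|\u\|^2\leq c\nu^{-1/2}(1+\|\Delta\phi\|^2)\|\Delta\phi\|^2$ and then estimates $\l\u\cdot\nabla\phi,\Delta^2\phi\r\leq \tfrac14\|\Delta^2\phi\|^2+c\nu^{-1}(1+\|\Delta\phi\|^2)\|\Delta\phi\|^4$, i.e.\ it places the whole factor $\nu^{-1}$ into the Gronwall \emph{multiplier} $\psi_1$; read literally this gives $m_1\sim c/\nu$ and hence a bound of order $\e^{c/\nu}$, which is all the paper actually uses (only fixed $\nu>0$ matters in Sections~\ref{S:Higher_order}--\ref{convequil}, and the $\nu$-uniform estimates are redone separately in 2D). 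You instead push the singular factor into the additive forcing $\psi_2$ by a $\nu$-weighted Young inequality, keeping $\psi_1$ free of $\nu$; this is a legitimate and, in intent, sharper way of tracking the viscosity dependence.

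There is, however, a slip in your bookkeeping. Your forcing term is $c\nu^{-1}\|\nabla\u\|^2$, and \eqref{e-base} only provides $\nu\int_0^\infty\|\nabla\u\|^2\leq \Q(R)$, so $\int_t^{t+1}\psi_2\leq c\nu^{-2}$, not $c(1+\nu^{-1})$ as you claim. Lemma~\ref{gen-gronw} then yields $\|\Delta\phi(t)\|^2\leq c(1+\nu^{-2})$, which after a square root still gives the stated bound $\|\phi(t)\|_2\leq c(1+\nu^{-1})$, but the subsequent integration only gives $\int_t^{t+1}\|\phi\|_4^2\leq c(1+\nu^{-2})$ rather than $c(1+\nu^{-1})$. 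Your alternative suggestion (using the $\nu$-free bound $\|\u\|\leq\|\phi\nabla\mu\|$ and redistributing derivatives by interpolation) is left as a gesture and does not obviously close without further work, since the resulting multiplier involves $\|\nabla\mu\|^2\|\phi\|_3$, whose unit-interval integral is not controlled by the available a priori estimates. In short: the argument establishes the substance of the proposition (an $\H^2$ bound for $t\geq1$, finite for each fixed $\nu>0$, which is all that is needed downstream), and its $\nu$-accounting is actually better than the paper's in the exponent, but neither your version as written nor the paper's delivers the precise constant $c(1+\nu^{-1})$ for both terms of \eqref{puntualeH2}.
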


\begin{proof}
Taking $\v=\u$ in equation \eqref{B2-w} we get
\begin{equation}
\label{Stokes-id}
\nu \| \nabla \u \|^2+\|\u\|^2 = \l \mu\nabla\phi,\u\r.
\end{equation}
By \eqref{GROW} and \eqref{e1} we have
$$\|\mu\|\leq \|\Delta \phi\|+\|f(\phi)\|\leq c(1+\|\Delta \phi\|).$$
Thus, for $\nu>0$, we can estimate the latter term as follows
\begin{align*}
    &\l \mu \nabla\phi,\u\r
    \leq  \| \mu \| \| \nabla \phi \|_{L^{6}}\| \u \|_{L^{3}}
    \leq c (1 + \|\Delta\phi\|) \|\Delta\phi\|\|\u\|^{1/2} \|\nabla\u\|^{1/2} \\
    &\quad\leq \frac{1}{2} \|\u\|^{2} + \frac{\nu}{2} \|\nabla\u\|^{2} + \frac{c}{\nu^{1/2}} (1 + \|\Delta\phi\|^{2}) \|\Delta\phi\|^{2}.
\end{align*}
From this we deduce
\begin{equation}\label{stimu1}
\nu \|\nabla\u\|^{2}+\|\u\|^{2} \leq \frac{c}{\nu^{1/2}} (1 + \|\Delta\phi\|^{2}) \|\Delta\phi\|^{2}.
\end{equation}
Let us now take $w= \Delta^2\phi$ in equation \eqref{B1-w}. This yields
$$\frac12\ddt \|\Delta\phi\|^2+ \|\Delta^2 \phi\|^2= \l\Delta f(\phi),\Delta^2\phi\r+\l\u\nabla\phi,\Delta^2\phi\r.$$
On the other hand, we have
$$ \l\Delta f(\phi),\Delta^2\phi\r\leq \frac14  \|\Delta^2 \phi\|^2+ c\|\Delta f(u)\|^2,$$
where $\|\Delta f(\phi)\|^2$ can be controlled in the following way. Observe that
$$\Delta f(\phi)=\nabla(f'(\phi)\nabla\phi)=f''(\phi)|\nabla\phi|^2 +f'(\phi)\Delta\phi.$$
Then, using \eqref{GROW3}, by the Agmon inequality we get
\begin{align*}
    \|f''(\phi) |\nabla\phi|^{2}\| &\leq c (1 + \|\phi\|_{L^{\infty}}) \|\nabla\phi\|_{L^{4}}^{2} \leq c (1 + \|\Delta\phi\|^{1/2}) \|\Delta\phi\|^{3/2},\\
    \|f'(\phi)\Delta\phi\| &\leq c(1 + \|\phi\|_{L^\infty}^2) \|\Delta\phi\| \leq c (1 + \|\Delta\phi\|) \|\Delta\phi\|.
\end{align*}
Therefore, we obtain
\begin{equation}
\label{Delta_f}
\|\Delta f(\phi)\|^2\leq c(1+\|\Delta\phi\|^2) \|\Delta\phi\|^{3}.
\end{equation}
In order to deal with the remaining term, exploiting \eqref{stimu1} we find
\begin{align*}
    &\l\u\nabla\phi,\Delta^2\phi\r
    \leq c\|\u\|_{L^{3}} \|\nabla\phi\|_{L^{6}} \|\Delta^2\phi\|
    \leq \|\u\|^{1/2} \|\nabla\u\|^{1/2} \|\Delta\phi\| \|\Delta^{2}\phi\|\\
   &\quad\leq \frac{c}{\nu^{1/2}} (1 + \|\Delta\phi\|) \|\Delta\phi\|^{2} \|\Delta^{2}\phi\|
    \leq \frac14 \|\Delta^2 \phi\|^2+\frac{c}{\nu} (1 + \|\Delta\phi\|^2) \|\Delta\phi\|^{4}.
\end{align*}
We thus end up with the differential inequality
$$\frac12\ddt \|\Delta\phi\|^2+\frac12  \|\Delta^2 \phi\|^2 \leq c(1 + \|\Delta\phi\|^{2}) \|\Delta\phi\|^3 + \frac{c}{\nu} (1 + \|\Delta\phi\|^{2}) \|\Delta\phi\|^{4}.$$
Recalling that $\phi\in L^4(0,T;\H^2)$ (see~\eqref{E:l4h2}), Lemma \ref{gen-gronw} yields the claimed result.
\end{proof}

\begin{remark}
\label{soluzioneforte}
Estimate \eqref{puntualeH2} entails that the weak solution $\phi$ is indeed a strong one for $t\geq 1$ (see \eqref{B1}).
In addition, if $\Omega$ is of class $C^{1,1}$, then the regularity of $\mu\nabla\phi$ implies that the weak solution $\u$ to \eqref{B2-w} belongs
to $L^2_{loc}((1,\infty;(H^2(\Omega))^3)$ and the pressure $p$ (see Remark~\ref{pressure}), unique up to an additive function of $t$ only,
belongs to $L^2_{loc}((1,\infty);H^1(\Omega))$ (see, e.g., \cite[Theorem ~IV.5.8]{BoyerFabrie2013}).
Thus equation \eqref{B2} is also satisfied almost everywhere if $t\geq 1$.
\end{remark}

We conclude this section by proving the existence of the global attractor, namely,

\begin{theorem}
\label{globattr}
Let $f$ satisfy all the assumptions in Proposition \ref{p:abs0} and Proposition \ref{t:pH2}. Then the dynamical system
$(V_I, S_\nu(t))$ possesses a (unique) global attractor $\mathcal A$ which is bounded in $\H^2$.
\end{theorem}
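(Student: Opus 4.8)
The plan is to invoke the standard criterion for the existence of a global attractor (see, e.g., \cite[Chapter~I]{Temam1997}): a strongly continuous semigroup on a complete metric space that possesses a \emph{compact} absorbing set admits a unique (connected) global attractor, given by the $\omega$-limit set of the absorbing set. We have already recorded that, for fixed $\nu>0$, the map $S_\nu(t)$ defines a strongly continuous semigroup on the complete metric space $V_I$, and Proposition~\ref{p:abs0} provides a bounded absorbing set $\mathcal B\subset V_I$ in the $\H^1$ topology. Thus the only missing ingredient is a compact absorbing set, which I would obtain by upgrading $\mathcal B$ to a bounded absorbing set in $\H^2$ and then invoking a compact embedding.

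To this end, first I would use the dissipative estimate \eqref{absorbing}: for every $R>0$ there is $t_R>0$ such that $S_\nu(t)\phi_0\in\mathcal B$ for all $t\geq t_R$ whenever $\|\phi_0\|_1\leq R$. In particular, $\sup_{t\geq t_R}\|S_\nu(t)\phi_0\|_1\leq R_I$, with $R_I$ depending only on $I=\langle\phi_0\rangle$ and not on the individual datum. Then I would apply Proposition~\ref{t:pH2} on the interval $[t_R,\infty)$, regarding $\phi(t_R)\in\mathcal B$ as a new initial datum: since the constant $c$ in \eqref{puntualeH2} depends only on the $\H^1$-bound of the datum, which is now the uniform radius $R_I$, we obtain
\begin{equation*}
\|S_\nu(t)\phi_0\|_2\leq c\Big(1+\tfrac1\nu\Big)=:\rho_\nu,\qquad\forall\, t\geq t_R+1,
\end{equation*}
with $\rho_\nu$ independent of $\phi_0$. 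Hence the ball
$$\mathcal B_2=\{\psi\in V_I\cap\H^2\,:\,\|\psi\|_2\leq\rho_\nu\}$$
is a bounded absorbing set in $\H^2$.

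Since $\Omega$ is bounded with the stated regularity, $\H^2$ is compactly embedded into $\H^1$, so $\mathcal B_2$ is precompact in $V_I$ and its closure $\overline{\mathcal B_2}^{\,\H^1}$ is a compact absorbing set. Applying the abstract criterion yields a unique global attractor $\mathcal A=\omega(\mathcal B_2)$. Finally, by invariance $\mathcal A=S_\nu(t)\mathcal A$ for all $t\geq0$, and eventually $S_\nu(t)\mathcal A\subset\mathcal B_2$, whence $\mathcal A\subset\mathcal B_2$; in particular $\mathcal A$ is bounded in $\H^2$, as claimed.

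The main obstacle is that the $\H^2$-bound \eqref{puntualeH2} of Proposition~\ref{t:pH2} carries a constant that a priori depends on $\|\phi_0\|_1$, so it does not by itself furnish a set absorbing \emph{all} bounded subsets of $V_I$ into a fixed $\H^2$-ball. This is resolved precisely by first absorbing into the $\H^1$-ball $\mathcal B$ and only then applying the higher-order estimate with the now-uniform radius $R_I$; everything else is a routine application of the standard attractor theorem.
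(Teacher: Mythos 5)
Your argument is correct and follows exactly the route the paper takes (which it states in only two sentences): combine the $\H^1$-absorbing set from Proposition~\ref{p:abs0} with the $\H^2$-smoothing of Proposition~\ref{t:pH2} to get a compact absorbing set, then invoke the standard attractor existence theorem. Your explicit handling of the datum-dependence of the constant in \eqref{puntualeH2} -- first absorbing into the uniform $\H^1$-ball and only then applying the higher-order estimate -- is the right way to fill in the detail the paper leaves implicit.
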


\begin{proof}
On account of the assumptions on $f$, thanks to Proposition \ref{p:abs0} and Proposition \ref{t:pH2}, we infer the existence of a \emph{compact} absorbing set (bounded in $\H^2$) for the semigroup $S_\nu(t)$. Hence, by standard results (see, e.g., \cite{Temam1997}) the proof follows.
\end{proof}

\begin{remark}
\label{r:attractor}
We recall that the global attractor $\mathcal A$ is the smallest (for the inclusion)
compact set of the phase space which is invariant by the flow (i.e., $S_\nu(t)\mathcal A = \mathcal A$, $\forall t\geq 0$) and
attracts all bounded sets of initial data as time goes to infinity, namely,
$$\forall B\subset V_I \text{ bounded, }\quad \text{\it dist}(S_\nu(t)B,\mathcal A)\to 0\quad\text{ as }t\to\infty,$$
where {\it dist} denotes the Hausdorff semi-distance between sets in $\H^1$.
\end{remark}
%%%%%%%%%%%%%%%%%%%

%%%%%%%%%%%%%%%%%%%
\section{Convergence to equilibria}
\label{convequil}
In what follows we let $\nu>0$ be fixed omitting in the notation the dependence on $\nu$.
Aim of this section is discussing Theorem \ref{LS}, showing in particular
that,  for every fixed $\phi_0\in \H^1$,
the global solution $\phi(\cdot)=S(\cdot)\phi_0$ originating from $\phi_0$ converges to an equilibrium $\phi^\star$ as $t\to\infty$,  with a certain convergence rate.

To this aim, we first recall that the $\omega$-limit set of $\phi_0$ is defined as
$$\omega(\phi_0)=\{\phi^\star\in \H^1\,:\, \text{$\phi(t_n) \to \phi^\star$ in $\H^1$,\; for some $\{ t_{n} \}_{n \in \mathbb{N}}$, $t_n\to\infty$}\},$$
and that the set of stationary points associated with $\phi_0$ is
$$\S(\phi_0)=\{z\in\H^2\,:\, -\Delta z+ f(z)=const,\, \l z\r=\l\phi_0\r\}.$$
Recall that by \eqref{puntualeH2} in the form
\begin{equation}
\label{rc}
\| \phi(t) \|_{2} \leq c,\quad t\geq 1,
\end{equation}
where along  the section $c>0$ denotes a generic constant, possibly depending on $\|\phi_0\|_1$ (and increasing as  $\nu\to 0$).
Hence, due to to the compact embedding $\H^2\hookrightarrow \H^1$,
the $\omega$-limit set of $\phi_0\in \H^1$ is a nonempty compact subset of $\H^1$.

\smallskip

With this notation,  the main step in the proof of Theorem \ref{LS}  consists in showing that each $\omega$-limit set
consists in one \emph{single} stationary state, as stated in the following
\begin{proposition}
\label{p:ce}
There exists $\phi^\star\in \S(\phi_0)$ such that $\omega(\phi_0)=\{\phi^\star\}$.
\end{proposition}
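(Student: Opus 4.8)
The plan is to regard the free energy
$$\E(\phi):=\tfrac12\|\nabla\phi\|^2+\l F(\phi),1\r$$
as a strict Lyapunov functional for the flow and to run the standard {\L}ojasiewicz--Simon argument, exploiting the analyticity of $f$. Testing as in \eqref{s-base} gives the energy identity
$$\ddt\E(\phi(t))+\|\nabla\mu(t)\|^2+\nu\|\nabla\u(t)\|^2+\eta\|\u(t)\|^2=0,$$
so $\E(\phi(\cdot))$ is nonincreasing and, by \eqref{DISS1}, bounded below; hence $\E(\phi(t))\searrow\E_\infty$ as $t\to\infty$. The critical points of $\E$ subject to $\l\phi\r=\l\phi_0\r$ are exactly the elements of $\S(\phi_0)$, and the $L^2$-gradient of $\E$ at $\phi$, projected onto the mean-zero space, is $\mu-\l\mu\r$. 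I would first record three facts about $\omega(\phi_0)$, which by \eqref{rc} and the compact embedding $\H^2\hookrightarrow\H^1$ is a nonempty compact subset of $\H^1$: (i) every $\phi^\star\in\omega(\phi_0)$ belongs to $\S(\phi_0)$, obtained by the usual time-shift argument, translating the trajectory along $t_n\to\infty$ with $\phi(t_n)\to\phi^\star$ and using $\int_0^\infty\|\nabla\mu\|^2<\infty$ from \eqref{e-base} to force $\nabla\mu^\star=0$, i.e.\ $-\Delta\phi^\star+f(\phi^\star)=\mathrm{const}$; (ii) $\l\phi^\star\r=\l\phi_0\r$ by mass conservation \eqref{masscons}; and (iii) $\E\equiv\E_\infty$ on $\omega(\phi_0)$, by continuity of $\E$ on $\H^1$ together with $\E(\phi(t))\searrow\E_\infty$.

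Next I would invoke the {\L}ojasiewicz--Simon inequality for $\E$: since $f$ is real analytic, at each $\phi^\star\in\S(\phi_0)$ there are constants $\theta=\theta(\phi^\star)\in(0,\tfrac12)$, $C>0$ and $\sigma>0$ such that
$$|\E(\phi)-\E(\phi^\star)|^{1-\theta}\le C\,\|\nabla\mu\|,\qquad\mu=-\Delta\phi+f(\phi),$$
for all $\phi\in\H^1$ with $\l\phi\r=\l\phi_0\r$ and $\|\phi-\phi^\star\|_1<\sigma$ (Poincar\'e--Wirtinger passes from the $L^2$-norm of the projected gradient $\mu-\l\mu\r$ to $\|\nabla\mu\|$). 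Because $\omega(\phi_0)$ is compact and $\E$ takes the single value $\E_\infty$ on it, a finite covering by such balls yields \emph{uniform} constants $\theta,C$ and a neighborhood $\U$ of $\omega(\phi_0)$ in $\H^1$ on which $|\E(\phi)-\E_\infty|^{1-\theta}\le C\|\nabla\mu\|$ holds. The decisive point that avoids any delicate ``trapping'' is that, the orbit being precompact in $\H^1$, one has $\dist(\phi(t),\omega(\phi_0))\to0$ in $\H^1$ as $t\to\infty$; hence there is $t^\star$ with $\phi(t)\in\U$ for all $t\ge t^\star$, so the {\L}ojasiewicz inequality is available along the entire tail of the trajectory.

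With this in hand, for $t\ge t^\star$ I would differentiate $(\E(\phi(t))-\E_\infty)^\theta$ and use the energy identity together with the inequality above (discarding the nonnegative terms $\nu\|\nabla\u\|^2+\eta\|\u\|^2$) to obtain
$$-\ddt(\E(\phi(t))-\E_\infty)^\theta=\theta(\E-\E_\infty)^{\theta-1}\big(\|\nabla\mu\|^2+\nu\|\nabla\u\|^2+\eta\|\u\|^2\big)\ge\frac{\theta}{C}\,\|\nabla\mu\|,$$
whence $\int_{t^\star}^\infty\|\nabla\mu(t)\|\,\d t\le \tfrac{C}{\theta}(\E(\phi(t^\star))-\E_\infty)^\theta<\infty$. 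To convert this into convergence of $\phi$ I would bound $\partial_t\phi$ in a negative norm: from \eqref{B1-w}, $\l\partial_t\phi,w\r=\l\phi\u,\nabla w\r-\l\nabla\mu,\nabla w\r$, so $\|\partial_t\phi\|_{\H^{-1}}\le\|\phi\|_{L^\infty}\|\u\|+\|\nabla\mu\|$; here the coupling enters, and the new ingredient relative to the pure Cahn--Hilliard case is the Brinkman bound obtained by taking $\v=\u$ in \eqref{B2-w}, namely $\|\u\|\le c\|\nabla\mu\|$ (using $\|\phi\|_{L^\infty}\le c$ from \eqref{rc}). Thus $\|\partial_t\phi\|_{\H^{-1}}\le c\|\nabla\mu\|\in L^1(t^\star,\infty)$, so $\phi(t)$ is Cauchy, hence convergent, in $\H^{-1}$ to some $\phi^\star$. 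Finally, boundedness in $\H^2$ and the interpolation $\|\,\cdot\,\|_1\le c\|\,\cdot\,\|_{-1}^{1/3}\|\,\cdot\,\|_2^{2/3}$ upgrade this to $\phi(t)\to\phi^\star$ in $\H^1$; since the limit is unique, $\omega(\phi_0)=\{\phi^\star\}$.

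The main obstacle is the {\L}ojasiewicz--Simon inequality itself: one must verify it for $\E$ constrained to fixed mean, checking that the linearization $-\Delta+f'(\phi^\star)$ is Fredholm on the mean-zero subspace with an analytic nonlinear remainder, so that Simon's abstract theorem applies in the $(\H^1,\H^{-1})$ duality adapted to the Cahn--Hilliard structure. The only genuinely CHB-specific difficulty is controlling the convective contribution, i.e.\ ensuring that the velocity produced by the capillary forcing $\phi\nabla\mu$ is dominated by the dissipation $\|\nabla\mu\|$; this is precisely what the Brinkman estimate $\|\u\|\le c\|\nabla\mu\|$ furnishes, with a constant deteriorating as $\nu\to0$, consistent with $c_\nu\to\infty$ in Theorem \ref{LS}.
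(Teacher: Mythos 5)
Your argument is correct and is, in substance, the very {\L}ojasiewicz--Simon proof that the paper invokes for Proposition \ref{p:ce} but omits (citing Jendoubi and Wang--Wu, and recording only the key inequality, Theorem \ref{LSineq}). The one genuine difference is procedural: the cited route is a contradiction/trapping argument run with the {\L}ojasiewicz--Simon inequality at the single accumulation point $\phi^\star$, whereas you obtain \emph{uniform} constants $\theta,C$ by a finite cover of the compact set $\omega(\phi_0)$ (on which $E\equiv E_\infty$) and then use that $\dist(\phi(t),\omega(\phi_0))\to 0$ in $\H^1$, which indeed follows from the precompactness of the orbit guaranteed by \eqref{rc}; both variants are standard, and yours is arguably cleaner since it removes the trapping step entirely. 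Your form of the inequality, with $\|\nabla\mu\|$ on the right-hand side, is implied by the paper's $\H^{-1}$ version via Poincar\'e--Wirtinger, so nothing is lost there. The CHB-specific ingredient you isolate --- the bound $\|\u\|\le c\|\nabla\mu\|$ obtained from \eqref{Stokes-id} together with $\|\phi\|_{L^\infty}\le c$ --- is exactly the estimate the paper itself deploys at the start of the proof of Proposition \ref{CR} (and, as you note, the constant degenerates as $\nu\to 0$, or when $\eta=0$, consistently with $c_\nu\to\infty$ in Theorem \ref{LS}). Finally, upgrading the $\H^{-1}$-Cauchy property to $\H^1$-convergence by interpolating against the uniform $\H^2$ bound is the right move for plain convergence without rate; the paper's Proposition \ref{CR} instead tests with $-\Delta\Phi$ because it is after the explicit decay rate \eqref{fiest}, which your interpolation step alone would weaken by a factor of the exponent.
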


Notice that, since $\omega(\phi_0)\neq \emptyset$, there exists  $\phi^\star\in\H^1$ and $t_n\to\infty$ such that
\begin{equation}
\label{lsucc}
\|\phi(t_n)-\phi^\star\|_1\to 0,\quad \mbox{ as } n\to\infty,
\end{equation}
Besides, owing to due to \eqref{s-base}, it is easy to realize that
the functional
$$E(z)=\frac12 \|\nabla z\|^2+\l F(z),1\r$$
with $z\in \H^1$ is a Lyapunov functional for $S(t)$. Thus, by standard results on gradient systems
(see, e.g., \cite[Chapter~9]{Cazenave1998}), we learn that $\omega(\phi_0)\subset\S(\phi_0)$ proving in particular that
$\phi^\star\in \S(\phi_0)$.
As a consequence, the proof of Proposition \ref{p:ce} will follow by showing that the \emph{whole trajectory} $\phi(\cdot)$ converges to $\phi^\star$, namely
\begin{equation}
\label{ltot}\lim_{t\to\infty}\|\phi(t)-\phi^\star\|_1=0.
\end{equation}

The proof of this fact can be obtained by a well-known contradiction argument due to \cite{Jendoubi1998} (see also \cite{ ZhaoWuHuang2009} and references therein), known as {\L}ojasiewicz-Simon approach.
We omit it, since it can be obtained by reasoning as in \cite[Section 3.3]{WW} with minor changes.
Let us only mention that
the key tool in our situation
is the following version of the {\L}ojasiewicz-Simon inequality (see  \cite[Proposition~6.3]{Abels2007}).

\begin{theorem}
\label{LSineq}
Let $(\phi, \u)$ be a solution of system~\eqref{B1}-\eqref{Bdiv} with initial datum $\phi_{0}\in H^1$ and let $z\in \omega(\phi_{0}) \subset \S(\phi_{0})$.
If $f$ is real analytic and satisfies \eqref{DISS0}, then there exist $\theta=\theta(z)>0$, $\theta \in (0, \tfrac{1}{2})$ and
$\varsigma=\varsigma(z)>0$ such that
\begin{equation}
\label{SL}
|E(\phi) - E(z)|^{1-\theta} \leq
\|\mathbf{P}(\Delta \phi-f(\phi))\|_{\H^{-1}},
\end{equation}
whenever $\phi$ fulfills
$\|\phi-z\|_1<\varsigma$.
Here $\mathbf{P}:\H\to \H$ is defined by $\mathbf{P}(u)=u-\l u\r$.
\end{theorem}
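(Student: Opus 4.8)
The inequality \eqref{SL} is the {\L}ojasiewicz--Simon inequality for the Cahn--Hilliard energy $E$, and the plan is to deduce it from the abstract analytic gradient-inequality machinery (see \cite[Proposition~6.3]{Abels2007}) by verifying its two structural hypotheses: real analyticity of $E$ near $z$, and the Fredholm property of its Hessian $E''(z)$. The starting point is the observation that $\mathbf{P}(\Delta\phi-f(\phi))$ is precisely the gradient of $E$ constrained to the affine mass-conservation space $V_I$, $I=\l\phi_0\r$. Indeed, for $v\in\H^1$ one has $\l E'(\phi),v\r=\l\nabla\phi,\nabla v\r+\l f(\phi),v\r=\l-\Delta\phi+f(\phi),v\r$ by the Neumann condition, and restricting to mean-zero test functions yields $\l E'(\phi),v\r=\l\mathbf{P}(-\Delta\phi+f(\phi)),v\r$. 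Since $\mathbf{P}$ is linear, $\mathbf{P}(-\Delta\phi+f(\phi))=-\mathbf{P}(\Delta\phi-f(\phi))$, so the dual norm of the constrained gradient is exactly the right-hand side of \eqref{SL}; moreover the critical points of $E|_{V_I}$ are exactly the elements of $\S(\phi_0)$, so $z$ is such a critical point.

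First I would establish that $E$ is real analytic in a neighborhood of $z$ in $\H^1$. The Dirichlet part $\tfrac12\|\nabla\cdot\|^2$ is a bounded quadratic form, hence entire; for the potential part $\phi\mapsto\l F(\phi),1\r$ one uses that $F$ is real analytic with the polynomial growth following from \eqref{GROW}--\eqref{GROW2}, together with the Sobolev embedding $\H^1\hookrightarrow L^6(\Omega)$ (valid for $d\le 3$), to show that the associated Nemytskii operator is real analytic from $\H^1$ into $\R$. This is the step I expect to be the main obstacle, as it requires estimating the Taylor remainder of the superposition operator in the correct Lebesgue scale so that the series converges in $\H^1$; it is exactly here that the growth restrictions on $f$ and its derivatives enter.

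Next I would check that the Hessian is Fredholm of index zero. A direct computation gives $\l E''(z)v,w\r=\l\nabla v,\nabla w\r+\l f'(z)v,w\r$, so the associated operator is $L=-\Delta+f'(z)$ with homogeneous Neumann conditions, acting on the mean-zero subspace. Since $z\in\H^2\hookrightarrow L^\infty(\Omega)$ for $d\le 3$ and $f'$ is continuous, the zeroth-order coefficient $f'(z)$ lies in $L^\infty(\Omega)$ and, by \eqref{DISS0}, is bounded below; hence $L$ is self-adjoint with compact resolvent, multiplication by $f'(z)$ being a relatively compact perturbation of $-\Delta$, and is therefore Fredholm of index zero. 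With analyticity of $E$ and this Fredholm property in hand, the abstract {\L}ojasiewicz--Simon theorem of \cite[Proposition~6.3]{Abels2007} produces the exponent $\theta=\theta(z)\in(0,\tfrac12)$ and the radius $\varsigma=\varsigma(z)>0$ for which \eqref{SL} holds whenever $\|\phi-z\|_1<\varsigma$. I note that the velocity $\u$ plays no role in \eqref{SL}, which concerns only the phase-field energy; the coupling with $\u$ will enter the convergence-to-equilibrium argument solely through the energy-dissipation identity \eqref{s-base}.
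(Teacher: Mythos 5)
The paper gives no proof of Theorem~\ref{LSineq}: the statement is imported verbatim from \cite[Proposition~6.3]{Abels2007} and used as a black box, so there is no in-paper argument to compare against. Your outline reconstructs the standard route behind such results --- identify $\mathbf{P}(-\Delta\phi+f(\phi))$ as the $\H^{-1}$-gradient of $E$ constrained to $V_I$, verify real analyticity of $E$ near $z$ and the Fredholm (index zero) property of $E''(z)=-\Delta+f'(z)$ on the mean-zero subspace with Neumann conditions, and invoke the abstract analytic gradient inequality. The gradient identification and the Fredholm step are correct as you state them, and you are right that $\u$ plays no role here.

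The one genuine gap is the step you flag but do not close: real analyticity of $\phi\mapsto\l F(\phi),1\r$ on an $\H^1$-ball. For the polynomial double well \eqref{doublewell} this is immediate, since $\int_\Omega\phi^4$ is a bounded $4$-linear form via $\H^1\hookrightarrow L^6$ for $d\le 3$, and the same works for any polynomial $F$ of degree at most $6$. For a general real analytic $f$, however, the standing hypotheses \eqref{GROW}--\eqref{GROW3} control only $f$, $f'$ and $f''$, and say nothing about the higher derivatives that enter the Taylor remainder of the Nemytskii functional; analyticity of a superposition operator on a space whose elements are unbounded (as $\H^1$ is for $d\ge2$) does not follow and is false in general. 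The standard repair --- and what is implicitly behind the citation --- is to prove \eqref{SL} on a neighborhood of $z$ measured in a topology controlling $L^\infty$ (for instance $\H^2$, which costs nothing here because $z\in\H^2$ and the trajectory is uniformly bounded in $\H^2$ by \eqref{rc}, so that an $\H^1$-small, $\H^2$-bounded neighborhood is all that Proposition~\ref{CR} ever uses), or to restrict to polynomial $F$. As written, your argument is complete only in the polynomial case.
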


\smallskip

 The next step consists in obtaining the rate of convergence of the trajectory to the equilibrium.
This is witnessed by the following:

\begin{proposition}
\label{CR}
Let $\theta=\theta(\phi^\star)\in (0,\tfrac{1}{2})$ as provided by Theorem \ref{LSineq}. Then,
\begin{equation}
\label{fiest}
\|\phi(t)-\phi^\star\|_1 \leq \frac{c}{(1+t)^{\theta/(1-2\theta)}},
\end{equation}
for some $c=c(\phi_0)\geq 0$ and every
$t\geq t^\star$, for some $t^\star>0$.
\end{proposition}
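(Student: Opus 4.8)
We already know from \eqref{lsucc} and the gradient structure that $\phi(t_n)\to\phi^\star$ in $\H^1$ along a sequence $t_n\to\infty$, that $\phi^\star\in\S(\phi_0)$, and that $E$ is a Lyapunov functional which therefore decreases along the trajectory to the limit value $E(\phi^\star)$. The plan is to combine the energy dissipation law with the {\L}ojasiewicz--Simon inequality of Theorem~\ref{LSineq} to obtain an integrable decay of $\partial_t\phi$, and then to convert this into the algebraic rate \eqref{fiest}. Throughout I would work for $t$ large enough that $\phi(t)$ has entered the neighborhood $\|\phi-\phi^\star\|_1<\varsigma$ where \eqref{SL} applies (this is guaranteed for $t\geq t^\star$ once convergence \eqref{ltot} is established, which we are allowed to assume here).

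\textbf{Main steps.} First I would differentiate the Lyapunov functional along the flow. Writing $H(t):=E(\phi(t))-E(\phi^\star)\geq 0$, the dissipation identity \eqref{s-base} together with the Brinkman identity \eqref{Stokes-id} gives, up to constants,
\begin{equation*}
-\ddt H(t)=\|\nabla\mu\|^2+\nu\|\nabla\u\|^2+\eta\|\u\|^2\geq \|\nabla\mu\|^2.
\end{equation*}
Next I would bound $-\ddt H$ from below by (a power of) $H$ itself. Testing \eqref{B1-w} and using the structure of the problem, one shows that $\|\partial_t\phi\|_{\H^{-1}}$ is controlled by $\|\nabla\mu\|$ plus a convective contribution involving $\u$; since $\|\u\|$ is in turn controlled by $\phi\nabla\mu$ through the Brinkman equation, the quantity $\|\mathbf{P}(\Delta\phi-f(\phi))\|_{\H^{-1}}$ appearing on the right of \eqref{SL} is dominated by $\|\nabla\mu\|$ up to a constant (here the factor $c_\nu$, blowing up as $\nu\to 0$, enters). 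Combining this with Theorem~\ref{LSineq} yields
\begin{equation*}
H(t)^{1-\theta}\leq c\,\|\nabla\mu(t)\|\leq c\Big(-\ddt H(t)\Big)^{1/2}.
\end{equation*}
Raising to a suitable power and rearranging produces a differential inequality of the form $-\ddt H\geq c\,H^{2(1-\theta)}$, whose integration (noting $2(1-\theta)>1$ since $\theta<\tfrac12$) gives the polynomial decay $H(t)\leq c(1+t)^{-1/(1-2\theta)}$.

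\textbf{From energy decay to the $\H^1$ estimate.} To pass from the decay of $H$ to \eqref{fiest} I would integrate $\|\partial_t\phi\|_{\H^{-1}}\leq c(-\ddt H)^{1/2}$ over $[t,\infty)$ and use Cauchy--Schwarz together with the just-obtained rate for $H$ to bound $\|\phi(t)-\phi^\star\|_{\H^{-1}}$ by $c(1+t)^{-\theta/(1-2\theta)}$. Finally, interpolating between this $\H^{-1}$ bound and the uniform $\H^2$ bound \eqref{rc}, I would recover the stated $\H^1$ rate, namely
\begin{equation*}
\|\phi(t)-\phi^\star\|_1\leq c\,\|\phi(t)-\phi^\star\|_{\H^{-1}}^{\sigma}\,\|\phi(t)-\phi^\star\|_2^{1-\sigma}\leq \frac{c}{(1+t)^{\theta/(1-2\theta)}},
\end{equation*}
for an appropriate interpolation exponent $\sigma$.

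\textbf{The main obstacle.} The delicate point is the second step: estimating the $\H^{-1}$-norm of $\partial_t\phi$ (equivalently of $\mathbf{P}(\Delta\phi-f(\phi))$) purely in terms of $\|\nabla\mu\|$. The convective term $\nabla\cdot(\phi\u)$ must be absorbed, and this requires bounding $\|\u\|$ by the dissipation; the Brinkman coupling $\eta\|\u\|^2\leq \l\phi\nabla\mu,\u\r$ lets one estimate $\|\u\|\leq c\,\|\phi\|_{L^\infty}\|\nabla\mu\|$, so that the convective contribution is genuinely higher order and can be folded into the $\|\nabla\mu\|$ term—at the cost of a constant $c_\nu$ that degenerates as $\nu\to 0$. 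Making this absorption rigorous, while keeping track of the uniform $\H^2$ bound needed for the interpolation, is where the argument is most technical; the remainder follows the by-now standard {\L}ojasiewicz--Simon scheme and can be carried out as in \cite[Section~3.3]{WW}.
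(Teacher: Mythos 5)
Your overall plan---the energy dissipation law combined with the {\L}ojasiewicz--Simon inequality to get an algebraic rate for $E(\phi(t))-E(\phi^\star)$ and then for $\|\phi(t)-\phi^\star\|_{\H^{-1}}$, with the convective term absorbed via the Brinkman identity---is exactly the route of the paper, which for this portion simply invokes \cite[Section 3.4]{WW} and records the outcome as \eqref{st-1}. The genuine gap is in your final step. Interpolating $\H^{1}$ between $\H^{-1}$ and $\H^{2}$ forces the exponent $\sigma=1/3$ (the admissible inequality is $\|\Phi\|_1\leq c\,\|\Phi\|_{\H^{-1}}^{1/3}\|\Phi\|_{2}^{2/3}$), so with $\|\Phi\|_2$ merely bounded by \eqref{rc} you obtain only the rate $(1+t)^{-\theta/(3(1-2\theta))}$, strictly weaker than \eqref{fiest}; your displayed final inequality would require $\sigma\geq 1$, which interpolation cannot provide. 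Replacing $\H^2$ by any higher norm that is merely bounded in time does not help: interpolation always dilutes the exponent.

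The paper instead upgrades the $\H^{-1}$ rate to the \emph{same} $\H^{1}$ rate by a parabolic smoothing argument. Setting $\Phi=\phi-\phi^\star$ and testing $\partial_t\Phi+\u\cdot\nabla(\Phi+\phi^\star)=\Delta\bigl(-\Delta\Phi+f(\phi)-f(\phi^\star)\bigr)$ with $-\Delta\Phi$, every right-hand side term (including the convective one, handled through \eqref{uuu}) is absorbed into $\tfrac12\|\nabla\Delta\Phi\|^2$ up to $c\|\Phi\|_{\H^{-1}}^2$, which by \eqref{st-1} yields
\begin{equation*}
\ddt\|\nabla\Phi\|^2+\|\nabla\Delta\Phi\|^2\leq \frac{c}{(1+t)^{2\theta/(1-2\theta)}}.
\end{equation*}
Since $\l\Phi\r=0$, the dissipative term controls $\alpha\|\nabla\Phi\|^2$ for some $\alpha>0$, and Lemma \ref{gr} then transfers the full decay rate of the forcing to $\|\nabla\Phi\|^2$, giving \eqref{fiest}. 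You should replace your interpolation step with an argument of this type; as written, your proof establishes a strictly weaker rate than the one claimed.
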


\begin{proof}
Reasoning as in \cite[Section 3.4]{WW}  (cf.~also \cite[5.2]{ZhaoWuHuang2009}),
it is easy to prove that \eqref{fiest} holds for the weaker norm $\|\phi(t)-\phi^\star\|_{\H^{-1}}$, namely
\begin{equation}
\label{st-1}
\|\phi(t)-\phi^\star\|_{\H^{-1}}+\int_{t}^{\infty}\|\nabla\mu(y)\|\,\d y\leq
\frac{c}{(1+t)^{\theta/(1-2\theta)}},\quad t\geq t^\star.
\end{equation}
In order to complete the proof,
we set
$$\Phi(t)=\phi(t)-\phi^\star,$$
and observe that, for $t\geq t^\star$ and almost everywhere in $\Omega$ (cf. Remark~\ref{soluzioneforte}), there holds
\begin{equation}
\label{eqd}
\partial_t \Phi +\u\cdot\nabla(\Phi+\phi^\star)=\Delta\big(-\Delta \Phi+[f(\phi)-f(\phi^\star)]\big).
\end{equation}

\noindent Recalling \eqref{Stokes-id}, we have
\begin{align*}
&\nu \|\nabla\u\|^2 + \|\u\|^2=\l \phi \nabla \mu, \u \r
\leq \|\nabla \mu\| \|\u\| \|\phi\|_{L^{\infty}}\\
&\quad\leq \|\nabla \mu\| \|\u\| \|\phi\|_{1}^{1/2} \|\phi\|_{2}^{1/2}
\leq \frac{1}{2} \|\u\|^{2} + c \|\nabla \mu\|^{2}.
\end{align*}
Besides,  since $\phi^\star\in \S(\phi_0)$, then $\mu^{\star}:=- \Delta \phi^\star + f(\phi^\star)$ is constant,
and we can estimate
\begin{align*}
&\|\nabla \mu\|=\|\nabla(\mu-\mu^\star)\| \leq \|\nabla \Delta \Phi\| + \|\nabla (f(\phi) - f(\phi^{\star}))\|
\leq \|\nabla \Delta \Phi\|+c\|\nabla\Phi\|\\&\quad\leq \|\nabla \Delta \Phi\|+c\|\Phi\|_{\H^{-1}}^{1/2}\|\nabla \Delta \Phi\|^{1/2}
 \leq c\|\nabla \Delta \Phi\| + \|\Phi\|_{\H^{-1}}.
\end{align*}
In particular, we obtain
\begin{equation}
\label{uuu}
\nu \|\nabla\u\|^2 +  \frac12\|\u\|^2 \leq c  \| \nabla \mu \|^2 \leq c\|\nabla \Delta \Phi\|^2 + c\|\Phi\|_{\H^{-1}}^2.
\end{equation}
Taking the product of~\eqref{eqd} with $-\Delta \Phi$ in $\H$ we have
$$\frac12 \ddt\|\nabla \Phi\|^2 + \|\nabla \Delta \Phi\|^2 =
-  \l \Delta[f(\phi)-f(\phi^\star)], \Delta \Phi \r+\l \u\cdot\nabla(\Phi+\phi^\star), \Delta \Phi \r.$$
Observe that
$$\l \Delta[f(\phi)-f(\phi^\star)], -\Delta \Phi \r \leq \|\nabla[f(\phi)-f(\phi^\star)]\| \| \nabla \Delta \Phi \| \leq c \| \nabla \Phi \|^2 + \frac14 \| \nabla \Delta \Phi \|^2.$$
The latter term can be estimated in light of \eqref{uuu} as
\begin{align*}
&\l \u \cdot \nabla (\Phi+\phi^\star), -\Delta \Phi\r
\leq \| \nabla\phi \|_{L^{6}} \| \u \| \| \Delta \Phi \|_{L^{3}}
\leq c  \| \u\|\| \Delta \Phi \|^{1/2} \| \nabla \Delta \Phi \|^{1/2}\\
&\quad\leq  c(\|\nabla \Delta \Phi\| + \|\Phi\|_{\H^{-1}}) \| \nabla \Delta \Phi \|^{7/8} \| \Phi \|_{\H^{-1}}^{1/8}
\leq \frac14 \| \nabla \Delta \Phi \|^{2} + c \| \Phi \|_{\H^{-1}}^{2}.
\end{align*}
We thus obtain, on account of \eqref{st-1}, the inequality
$$\frac12 \ddt \| \nabla \Phi \|^2 + \frac12 \| \nabla \Delta \Phi \|^2 \leq c \| \Phi \|_{\H^{-1}}^2   \leq \frac{c}{(1+t)^{2\theta/(1-2\theta)}}.$$
Recalling that $\|\nabla \Phi(t^\star)\|\leq c$, an application of Lemma \ref{gr}
yields \eqref{fiest}.
\end{proof}

\subsection*{Convergence of the velocity field $\u$}
In order to complete the proof of Theorem \ref{LS} we are left to show that
$\|\u(t)\|_1$ decays to $0$ as $t\to\infty$. This requires a different argument than in \cite{WW}, so we detail the proof.
First we need  a further regularization of $\phi$, namely,
\begin{lemma}
The following inequality holds
$$\|\nabla\mu(t)\|+\|\nabla \Delta\phi(t)\|\leq c,\quad\forall t\geq 2.$$
\end{lemma}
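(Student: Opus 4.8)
The plan is to upgrade the $\H^2$ bound \eqref{puntualeH2} to a uniform $\H^3$ bound by means of the uniform Gronwall Lemma~\ref{gen-gronw}. First observe that it suffices to control $\|\nabla\Delta\phi(t)\|$: since $\nabla\mu=-\nabla\Delta\phi+f'(\phi)\nabla\phi$ and, by \eqref{rc}, $\phi\in L^\infty((1,\infty);\H^2)\hookrightarrow L^\infty((1,\infty);L^\infty(\Omega))$ with $\|\phi\|_{L^\infty}\le c$, we have $\|f'(\phi)\nabla\phi\|\le c\|\nabla\phi\|\le c$, whence $\|\nabla\mu\|\le\|\nabla\Delta\phi\|+c$. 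Set $y(t):=\|\nabla\Delta\phi(t)\|^2$. From \eqref{puntualeH2} we already know $\int_t^{t+1}\|\phi(\tau)\|_4^2\,\d\tau\le c$, and since $\|\nabla\Delta\phi\|\le c\|\phi\|_3\le c\|\phi\|_4$ this gives $\int_t^{t+1}y(\tau)\,\d\tau\le c$ for every $t\ge 1$. Thus it is enough to produce a differential inequality of the form $\ddt y\le c\,y+c$, so that Lemma~\ref{gen-gronw} (with $r=1$) applies and yields $y(t)\le c$ for $t\ge 2$.

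To obtain such an inequality I would test \eqref{B1}, written as $\partial_t\phi+\u\cdot\nabla\phi=\Delta\mu$ (using $\nabla\cdot\u=0$), with $w=-\Delta^3\phi$. Proceeding formally within the Galerkin scheme and integrating by parts (the boundary terms being treated as in the preceding sections), this produces the identity
\begin{equation*}
\tfrac12\ddt\|\nabla\Delta\phi\|^2+\|\nabla\Delta^2\phi\|^2=\l\nabla\Delta f(\phi),\nabla\Delta^2\phi\r-\l\nabla(\u\cdot\nabla\phi),\nabla\Delta^2\phi\r,
\end{equation*}
where the dissipative term $\|\nabla\Delta^2\phi\|^2$ will be used to absorb the two terms on the right.

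The reaction term is routine: since $f$ is real analytic, hence $f\in\C^3(\R)$, and $\|\phi\|_{L^\infty}\le c$, a chain–rule expansion of $\nabla\Delta f(\phi)$ together with $\|\phi\|_3^2\le c(1+y)$ gives $\|\nabla\Delta f(\phi)\|^2\le c(1+y)$, so that $\l\nabla\Delta f(\phi),\nabla\Delta^2\phi\r\le\tfrac14\|\nabla\Delta^2\phi\|^2+c(1+y)$ by Young's inequality. The convective term is the main obstacle, and the crucial point is to bound it \emph{linearly} in $y$. Using only $\u\in\HD$ with $\|\nabla\u\|^2\le c\|\nabla\mu\|^2\le c(1+y)$ from \eqref{Stokes-id}, and distributing derivatives in $L^3$/$L^6$ (so that $\|\u\cdot D^2\phi\|\le\|\u\|_{L^6}\|D^2\phi\|_{L^3}$, with $D^2\phi$ interpolated between $\H^2$ and the $\H^5$ dissipation), one is led to a superlinear power of $y$ that the uniform Gronwall lemma cannot close. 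To avoid this I would instead invoke the $H^2$–regularity of $\u$ (cf.\ Remark~\ref{soluzioneforte}): elliptic regularity for the Brinkman operator gives, for fixed $\nu$, $\|\u\|_{H^2}\le c\|\phi\nabla\mu\|\le c\|\phi\|_{L^\infty}\|\nabla\mu\|\le c(1+y^{1/2})$. Then, writing $\nabla(\u\cdot\nabla\phi)=(\nabla\u)\nabla\phi+\u\cdot D^2\phi$ and using $H^2\hookrightarrow L^\infty\cap W^{1,3}$ together with $\|\nabla\phi\|_{L^6}\le c\|\phi\|_2\le c$ and $\|D^2\phi\|\le c$, one obtains $\|\nabla(\u\cdot\nabla\phi)\|\le c(1+y^{1/2})$, whence $\l\nabla(\u\cdot\nabla\phi),\nabla\Delta^2\phi\r\le\tfrac14\|\nabla\Delta^2\phi\|^2+c(1+y)$.

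Collecting the two estimates and absorbing the dissipative contributions, the identity yields $\ddt y+\|\nabla\Delta^2\phi\|^2\le c(1+y)$, in particular $\ddt y\le c\,y+c$. Since $\int_t^{t+1}y(\tau)\,\d\tau\le c$, Lemma~\ref{gen-gronw} gives $\|\nabla\Delta\phi(t)\|^2=y(t)\le c$ for every $t\ge 2$, and the bound on $\|\nabla\mu(t)\|$ then follows from the reduction in the first paragraph. I expect the only genuinely delicate point to be the linear-in-$y$ control of the convective term, which forces the use of the $H^2$ estimate for $\u$ rather than the mere $\HD$ regularity.
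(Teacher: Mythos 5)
Your argument is correct, but it takes a genuinely different route from the paper's. The paper tests \eqref{B1-w} with $w=\Delta^2\mu$ and tracks $\|\nabla\mu\|^2$ (recovering $\|\nabla\Delta\phi\|$ afterwards from $\nabla\mu=-\nabla\Delta\phi+f'(\phi)\nabla\phi$, i.e.\ \eqref{tree}), with dissipation $\|\nabla\Delta\mu\|^2$; you test with $-\Delta^3\phi$ and track $y=\|\nabla\Delta\phi\|^2$ directly, with dissipation $\|\nabla\Delta^2\phi\|^2$. The substantive difference is in the convective term. You assert that with only the $\HD$-regularity of $\u$ one gets a superlinear power of $y$ that the uniform Gronwall lemma cannot close; in fact the paper closes exactly such a term by \emph{not} replacing $\|\nabla\u\|^2$ with a pointwise bound in $y$, but keeping it as a time-dependent coefficient: its convective term is bounded by $\tfrac14\|\nabla\Delta\mu\|^2+c\|\nabla\u\|^2\|\nabla\Delta\phi\|$, which puts the differential inequality in the form $\ddt\psi_0\le\psi_0\psi_1+\psi_2$ with $\psi_1=c(1+\|\nabla\u\|^2)$ integrable on unit intervals by \eqref{e-base} --- precisely the setting of Lemma~\ref{gen-gronw}; the same grouping would also close your $-\Delta^3\phi$ version without any extra input. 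Your alternative --- invoking $H^2$ elliptic regularity for the Brinkman system to get $\|\u\|_{H^2}\le c\|\phi\nabla\mu\|\le c(1+y^{1/2})$ and hence a right-hand side linear in $y$ --- is legitimate, but it costs something the paper's proof does not need: Stokes $H^2$-regularity of the domain (the paper uses this only in Remark~\ref{soluzioneforte}, under the additional assumption that $\Omega$ is $C^{1,1}$, whereas the standing hypotheses also allow polygonal/polyhedral domains), plus a constant degenerating as $\nu\to0$ through the elliptic estimate (harmless here, since $c_\nu\to\infty$ is allowed). The remaining ingredients of your proof --- the reduction $\|\nabla\mu\|\le\|\nabla\Delta\phi\|+c$, the bound $\|\nabla\Delta f(\phi)\|^2\le c(1+y)$ via $\|\phi\|_{L^\infty}\le c$, and the integrability $\int_t^{t+1}y\,\d\tau\le c$ feeding Lemma~\ref{gen-gronw} --- are sound and parallel to the paper's.
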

\begin{proof}
Taking $w=\Delta^2\mu$ in \eqref{B1-w} we have
\begin{align*}
\l \phi_t,\Delta^2\mu\r=\l \u\nabla\phi+\Delta \mu,\Delta^2\mu\r= -\l\nabla(\u\nabla\phi),\nabla\Delta\mu\r-\|\nabla\Delta\mu\|^2.
\end{align*}
Exploiting the definition of $\mu$,
which gives $\mu_t=-\Delta\phi_t+ f'(\phi)\phi_t$,
we obtain
\begin{align*}
\l \phi_t,\Delta^2\mu\r&=\l -\Delta\phi_t, -\Delta\mu\r
=\l \mu_t, -\Delta\mu\r-\l f'(\phi)\phi_t, -\Delta\mu\r\\
&=\frac12\ddt\|\nabla\mu\|^2-\l f'(\phi)\Delta\mu, -\Delta\mu\r+\l f'(\phi)(\u\nabla\phi), -\Delta\mu\r.
\end{align*}
Hence we deduce that
$$\frac12\ddt\|\nabla\mu\|^2+\|\nabla\Delta\mu\|^2=-\l f'(\phi)\Delta\mu, \Delta\mu\r+\l f'(\phi)(\u\nabla\phi), \Delta\mu\r-\l\nabla(\u\nabla\phi),\nabla\Delta\mu\r.$$
Let us estimate the terms on the right hand side.
Observe that, in light of \eqref{rc}, we have
$$\|f'(\phi)\|_{L^\infty}\leq c.$$
Thus we get
\begin{align*}
-\l f'(\phi)\Delta\mu, \Delta\mu\r&\leq \|f'(\phi)\|_{L^\infty}\|\Delta\mu\|^2\leq c\|\Delta\mu\|^2\\
&\leq c\|\nabla\Delta\mu\|\|\nabla\mu\|\leq\frac14\|\nabla\Delta\mu\|^2+ c\|\nabla\mu\|^2,
\end{align*}
and
\begin{align*}
\l f'(\phi)(\u\nabla\phi), \Delta\mu\r&\leq \|f'(\phi)\|_{L^\infty}\|\Delta\mu\|\|\u\|_{L^4}\|\nabla\phi\|_{L^4}\\
&\leq\|f'(\phi)\|_{L^\infty}\|\nabla \mu\|^{1/2}\|\nabla\Delta\mu\|^{1/2}\|\u\|^{1/4}\|\nabla \u\|^{3/4}
\|\nabla\phi\|^{1/4}\|\nabla^2\phi\|^{3/4}\\
&\leq \frac14\|\nabla\Delta\mu\|^2+ c\|\nabla\mu\|^2+c\|\nabla\u\|^2.
\end{align*}
Furthermore, by the Agmon inequality, we get
\begin{align*}
&\l\nabla(\u\nabla\phi),\nabla\Delta\mu\r\leq\|\nabla\Delta\mu\|(\|\nabla\u\|\|\nabla\phi\|_{L^\infty}
+\|\u\|_{L^6}\|\nabla^2\phi\|_{L^3})\\
&\quad\leq c\|\nabla\Delta\mu\|\|\nabla\u\|\|\nabla\Delta\phi\|^{1/2}
\leq\frac14\|\nabla\Delta\mu\|^2+c\|\nabla \u\|^2\|\nabla\Delta\phi\|.
\end{align*}
Note that $\nabla\mu=-\nabla\Delta\phi+f'(\phi)\nabla\phi$. Then we infer
\begin{equation}
\label{tree}
\|\nabla\Delta\phi\|\leq \|\nabla\mu\|+\|f'(\phi)\|_{L^\infty}\|\nabla\phi\|\leq (\|\nabla\mu\|+c),
\end{equation}
so that
\begin{align*}
\l\nabla(\u\nabla\phi),\nabla\Delta\mu\r
\leq\frac14\|\nabla\Delta\mu\|^2+c\|\nabla \u\|^2+c\|\nabla \u\|^2\|\nabla\mu\|^2.
\end{align*}
We thus end up with the differential inequality
$$\frac12\ddt\|\nabla\mu\|^2+\frac14\|\nabla\Delta\mu\|^2
\leq c(1+\|\nabla \u\|^2+\|\nabla\mu\|^2+\|\nabla \u\|^2\|\nabla\mu\|^2).$$
Recalling that  $\int_0^\infty (\|\nabla \u(y)\|^2+\|\nabla\mu\|^2)\,\d y<c$ (see \eqref{e-base}),
Lemma \ref{gen-gronw} yields
$$\|\nabla\mu(t+1)\|^2\leq c,\quad t\geq 1.$$
Finally, by \eqref{tree}
we also have
$$\|\nabla\Delta\phi(t+1)\|^2\leq c,\quad t\geq 1,$$
as claimed.
\end{proof}
We are now ready to prove the convergence of $\u$ to zero.
To this aim, let us observe that, since $\mu^{\star}$  is constant, then the equation for the velocity field $\u^\star$ associated with $\phi^\star$ reduces to
\begin{equation*}
    \begin{cases}
        -\nu \Delta \u^{\star} + \u^{\star} = -\nabla p^{\star}\\
        \nabla \cdot \u^{\star} = 0.
    \end{cases}
\end{equation*}
Therefore, upon multiplication by $\u^{\star}$ and integration over $\Omega$, we deduce
$$ \nu \| \nabla \u^{\star} \|^{2} + \| \u^{\star} \|^2 = 0 .$$
This implies $\u^{\star} \equiv \mathbf{0}$, and  the following equation for the pressure~$p^{\star}$ holds
$$ \nabla p^\star = \mu^\star\nabla\phi^\star-\nabla(\phi^\star\mu^\star).$$
Subtracting this last equation from~\eqref{B2-w} we deduce
$$-\nu\Delta\u+\u=-\nabla (p-p^\star)+(\mu-\mu^\star)\nabla(\Phi+\phi^\star)+\mu^\star\nabla \Phi+\nabla(\phi^\star\mu^\star).$$
Testing this relation by $\u$,  we obtain
\begin{align*}
\nu \|\nabla\u\|^2 + \|\u\|^2&=\l(\mu-\mu^\star)\nabla(\Phi +\phi^\star),\u\r+\l\mu^\star\nabla \Phi,\u\r\\
&\leq \|\mu-\mu^\star\|_{L^{3}} \|\nabla(\Phi+\phi^\star)\|_{L^6} \|\u\| + |\mu^\star| \|\nabla \Phi\| \|\u\|\\
&\leq \| \mu -\mu^{\star} \|^{1/2} \| \nabla (\mu - \mu^{\star}) \|^{1/2} \| \phi \|_{2} \| \u \| + c \| \nabla \Phi \| \| \u \| \\
&\leq \frac{1}{2} \|\u\|^{2 }+c \| \Delta \Phi \|^{2} +c\| \mu -\mu^{\star} \| \| \nabla \mu  \|.
\end{align*}
Since $
\| \mu - \mu^{\star} \| \leq \| \Delta \Phi \|+\|f(\phi)-f(\phi^\star)\|\leq c\|\Delta \Phi\|,
$
we have  the estimate
\begin{equation*}
\nu \|\nabla\u\|^2 +\frac12 \|\u\|^2\leq c\|\Delta \Phi\|(\| \Delta \Phi \|+\|\nabla \mu\|),
\end{equation*}
and, by exploiting the boundedness of $\nabla \mu$ and $\Delta \Phi$, this yields
$$\|\u\|^2_1\leq c_\nu\|\Delta\Phi\|,$$
for every $t\geq 2$.
Finally, by interpolation and invoking the boundedness of $\|\nabla\Delta\phi\|$,
we have
$$\|\u\|^2_1\leq c_\nu\|\nabla \Phi\|^{1/2}\|\nabla\Delta \Phi\|^{1/2}\leq c_\nu\|\nabla \Phi\|^{1/2}=c_\nu\|\nabla(\phi-\phi^\star)\|^{1/2}.$$
Therefore, Proposition \ref{CR} entails \eqref{convrate-u}.

\begin{remark}
\label{r:eta0}
All the results and the estimates performed in this section and in Section \ref{S:Higher_order} can be carried out in the case $\eta=0$ with minor changes.
\end{remark}
%%%%%%%%%%%%%%%%%

%%%%%%%%%%%%%%%%%%
\section{The limit $\nu\to 0$}
\label{S:nu_to_0}

Before studying the convergence of solutions to CHB system as $\nu \to 0$, we recall the following compactness result (see, e.g., \cite{lions}).
\begin{theorem}
\label{lions}
Let $X_0\subset\subset X\subset X_1$ be three reflexive Banach spaces. Let $1<a,b<\infty$ and define
$$W^{a,b}(0,T; X_0,X_1)=\{z\in L^{a}(0,T;X_0)\,:\,\partial_t z\in L^{b}(0,T;X_1)\}.$$
Then $W^{a,b}(0,T; X_0,X_1)$ is reflexive and
$$W^{a,b}(0,T; X_0,X_1)\hookrightarrow L^{a}(0,T; X)$$
with compact embedding.
\end{theorem}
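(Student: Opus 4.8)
The plan is to establish the two assertions in turn: the reflexivity of $W^{a,b}(0,T;X_0,X_1)$, and then the compactness of its embedding into $L^a(0,T;X)$.

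\emph{Reflexivity.} I would realize $W^{a,b}$ as a closed subspace of a reflexive product space. Since $1<a,b<\infty$ and $X_0,X_1$ are reflexive, the Bochner spaces $L^a(0,T;X_0)$ and $L^b(0,T;X_1)$ are reflexive, hence so is the product $\mathcal P:=L^a(0,T;X_0)\times L^b(0,T;X_1)$. The graph map $z\mapsto(z,\partial_t z)$ is an isometry of $W^{a,b}$ onto its image in $\mathcal P$, and this image is closed: if $z_n\to z$ in $L^a(0,T;X_0)$ and $\partial_t z_n\to w$ in $L^b(0,T;X_1)$, then passing to the limit in $\int_0^T \partial_t z_n\,\varphi\,\d t=-\int_0^T z_n\,\partial_t\varphi\,\d t$ for all $\varphi\in C_c^\infty(0,T)$ identifies $w=\partial_t z$ as an $X_1$-valued distribution. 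A closed subspace of a reflexive space is reflexive, which gives the first assertion.

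\emph{Compactness.} The engine is the Ehrling-type interpolation inequality: for every $\eta>0$ there is $C_\eta>0$ such that
\[
\|v\|_X\le \eta\,\|v\|_{X_0}+C_\eta\,\|v\|_{X_1},\qquad\forall\,v\in X_0,
\]
which I would obtain by contradiction from the compact embedding $X_0\subset\subset X$ together with the continuity of $X\hookrightarrow X_1$. Let $\{z_n\}$ be bounded in $W^{a,b}$, say $\|z_n\|_{W^{a,b}}\le M$. By the reflexivity just established I pass to a subsequence converging weakly in $W^{a,b}$ and, by linearity of the embedding, subtract the limit to assume $z_n\rightharpoonup 0$ weakly in $L^a(0,T;X_0)$; the goal becomes strong convergence $z_n\to0$ in $L^a(0,T;X)$. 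I would prove relative compactness of $\{z_n\}$ in $L^a(0,T;X)$ via the Fr\'echet--Kolmogorov--Simon criterion, which (for $1\le a<\infty$) rests on two facts. First, for every $0<t_1<t_2<T$ the averages $\int_{t_1}^{t_2}z_n\,\d t$ are bounded in $X_0$ by H\"older, hence relatively compact in $X$ by $X_0\subset\subset X$. Second, the increments $z_n(\cdot+h)-z_n$ must tend to $0$ in $L^a(0,T-h;X)$ uniformly in $n$ as $h\to0^+$; applying Ehrling pointwise in time and integrating yields
\[
\|z_n(\cdot+h)-z_n\|_{L^a(0,T-h;X)}\le 2\eta M+C_\eta\,\|z_n(\cdot+h)-z_n\|_{L^a(0,T-h;X_1)},
\]
whose last term is controlled by the derivative bound. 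Relative compactness in $L^a(0,T;X)$ together with weak convergence to $0$ forces the strong limit to be $0$, completing the argument.

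\emph{Main obstacle.} The delicate point is the mismatch between the exponent $a$ governing $X_0$ and the exponent $b$ governing the derivative in $X_1$: strong compactness in time must be extracted from an $L^b$ bound on $\partial_t z_n$ valued in the \emph{coarse} space $X_1$, where no compactness is available. I would resolve this by observing that $z_n$ is bounded in $L^a(0,T;X_1)$ (through $X_0\hookrightarrow X_1$) with $\partial_t z_n$ bounded in $L^b(0,T;X_1)$, so $\{z_n\}$ is uniformly bounded in $C([0,T];X_1)$ and equicontinuous there with H\"older modulus $|h|^{1-1/b}$; hence $\|z_n(\cdot+h)-z_n\|_{L^a(0,T-h;X_1)}\le c\,h^{1-1/b}\to0$ uniformly in $n$. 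Letting first $h\to0$ and then $\eta\to0$ closes the translation estimate. The conceptual subtlety worth stressing is that the compact embedding $X_0\subset\subset X$ enters only through the time-averaged quantities $\int_{t_1}^{t_2}z_n\,\d t$, never pointwise in $t$, while the derivative bound supplies equicontinuity in the weaker topology of $X_1$; it is the interplay of these two, glued by the Ehrling inequality, that produces the compactness.
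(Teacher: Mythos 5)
Your proof is correct. Note first that the paper does not actually prove Theorem~\ref{lions}: it is quoted as a known compactness result with a pointer to the literature (it is the Aubin--Lions--Simon lemma), so there is no in-paper argument to compare against; what you have written is a sound, essentially self-contained rendition of the standard proof. The reflexivity argument via the graph isometry onto a closed subspace of $L^a(0,T;X_0)\times L^b(0,T;X_1)$, with closedness checked distributionally, is exactly right. For compactness, your two ingredients --- Simon's criterion (relative compactness of the time averages $\int_{t_1}^{t_2}z_n\,\d t$ in $X$, plus uniform smallness of time translations) and the Ehrling inequality to trade the fine norm $X$ for the coarse norm $X_1$ --- are the standard ones, and the way you close the translation estimate (the derivative bound gives a uniform H\"older modulus $h^{1-1/b}$ in $C([0,T];X_1)$, then let $h\to0$ before $\eta\to0$) is correct; this is precisely where $b>1$ is used, while $a>1$ is what makes the averages land in $X_0$ via H\"older. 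Two small points you should make explicit in a full write-up: the Ehrling lemma needs the embedding $X\hookrightarrow X_1$ to be injective as well as continuous, otherwise the contradiction argument cannot identify the limit as zero; and the pointwise identity $z_n(t+h)-z_n(t)=\int_t^{t+h}\partial_t z_n\,\d s$ presupposes passing to the absolutely continuous $X_1$-valued representative of $z_n$, which exists because both $z_n$ and $\partial_t z_n$ belong to $L^1(0,T;X_1)$.
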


\subsection{Proof of Theorem \ref{t:nuto0}}
Let $\phi_0\in \H^1$ and let $\{\nu_n\}_{n\in\N}$ be a sequence of positive numbers such that $\nu_n\to 0$ as $n\to\infty$.
Consider the sequence $(\phi_{\nu_n},\u_{\nu_n})$ of weak solutions corresponding to the CHB system with $\nu=\nu_n$.
From the previous sections we know that the following bounds on $\{ \phi_{\nu_n} \}_{n\in N}$, $\{ \u_{\nu_n} \}_{n\in\N}$ and
$\{ \mu_{\nu_n} \}_{n\in\N}$ are independent of $n$:
    \begin{align*}
        &\|\phi_{\nu_n}\|_{L^\infty(0,T;\H^1)}+\|\phi_{\nu_n}\|_{L^2(0,T;\H^3)}\leq c,\\
        &\| \mu_{\nu_n} \|_{L^{2}(0,T;\H^1)} \leq c,\\
        &\|\nabla\cdot(\phi_{\nu_n} \u_{\nu_n})\|_{L^{8/5}(0,T;\H^{-1})}+\|\partial_t\phi_{\nu_n}\|_{L^{8/5}(0,T;\H^{-1})}\leq c,\\
        &\|\u_{\nu_n}\|_{L^{2}(0,T;{\boldsymbol H})}\leq c,\\
        &\|\phi_{\nu_n}\nabla \mu_{\nu_n}\|_{L^{8/5}(0,T;{\boldsymbol H})}\leq c.
    \end{align*}
    Thus we deduce that there exists a relabeled sequence $\{ \nu_{n} \}_{n \in \mathbb{N}}$ such that
    \begin{align*}
    &\phi_{\nu_{n}} \to \phi \quad \text{ weakly in } L^2(0,T;\H^3),\\
    &\mu_{\nu_{n}} \to z \quad \text{ weakly in } L^2(0,T;\H^1),\\
    &\u_{\nu_{n}} \to \u \quad \text{ weakly in } L^2(0,T;{\boldsymbol H}).
    \end{align*}
    By the boundedness of $\partial_{t} \phi_{\nu}$ in $L^{8/5}(0,T;\H^{-1})$ and by the uniqueness of $L^{p}$ and distributional limits, we also have
    $$\partial_t \phi_{\nu_{n}} \to \partial_t\phi\quad \text{ weakly in } L^{8/5}(0,T;\H^{-1}).$$

    \smallskip
    \noindent Applying Theorem~\ref{lions} to $\phi_{\nu_{n}}$ with $X_1=\H^{-1}$ and $X_0=\H^3$, up to a further subsequence, which will be relabeled  $\nu_{n}$, one has
    $$\phi_{\nu_{n}} \to \phi\quad \text{ strongly in } L^{2}(0,T;\H^s),$$
    for all $0\leq s<3$ and
    \begin{equation*}
        \phi_{\nu_{n}} \to\phi\quad \text{ a.e. in } \Omega\times (0,T).
    \end{equation*}
Moreover,  from the regularity of the potential $f$, it follows that $z = - \Delta \phi +f(\phi) =\mu$.

    \smallskip
    \noindent We can now consider the nonlinear terms appearing in~\eqref{B1-w} and \eqref{B2-w}. Let $h$ be a positive real number. First of all, we show convergence of $\phi_{\nu_{n}} \nabla \mu_{\nu_{n}}$ to $\phi \nabla \mu$ in the following (weak) sense
    \begin{align*}
        \int_{t}^{t+h} \l \phi_{\nu_{n}} \nabla \mu_{\nu_{n}} - \phi \nabla \mu, \v \r \, \d t \to 0,\qquad \forall \v\in {\boldsymbol V}.
    \end{align*}
    The integrand can be rewritten as
    $$\l (\phi_{\nu_{n}} - \phi) \nabla \mu_{\nu_{n}}, \v \r + \l \phi[\nabla \mu_{\nu_{n}} - \nabla \mu], \v \r.$$
    The first term in this expression is bounded by
    $$\l (\phi_{\nu_{n}} - \phi) \nabla \mu_{\nu_{n}}, \v \r \leq \|\phi_{\nu_{n}} - \phi\|_{L^3} \|\nabla \mu_{\nu_{n}}\|\|\v\|_{L^6},$$
    so that
    $$\Big|\int_{t}^{t+h} \l (\phi_{\nu_{n}} - \phi) \nabla \mu_{\nu_{n}}, \v \r\,\d t \Big|
    \leq \|\phi_{\nu_{n}} - \phi\|_{L^2(0,T;L^3)} \| \mu_{\nu_{n}} \|_{L^{2}(0,T;\H^1)} \| \v \|_{\boldsymbol V} \to 0.$$
   Recalling that $\phi \in L^{2}(0,T;L^{\infty}(\Omega))$ the weak convergence of $\mu_{\nu_{n}}$ in $L^{2}(0,T;\H^1)$ implies
    $$\l \phi[\nabla \mu_{\nu_{n}} - \nabla \mu], \v \r \to 0.$$

    \smallskip
    \noindent Similarly we can deal with the convergence in $\nabla\cdot (\phi_{\nu_{n}} \u_{\nu_{n}})$. Indeed, we have
    \begin{align*}
        \int_{t}^{t+h} \l \phi_{\nu_{n}} \u_{\nu_{n}} - \phi\u, \nabla v \r \, \d t \to 0,\qquad \forall v \in \H^1.
    \end{align*}
    This can be easily seen by rewriting the integrand as
    $$\l (\phi_{\nu_{n}} - \phi) \u_{\nu_{n}}, \nabla v \r + \l \phi[\u_{\nu_{n}} - \u], \nabla v \r.$$
    Indeed, the second term vanishes as $n\to\infty$ in light of the convergence
    $$\u_{\nu_{n}} \to \u \quad \text{ weakly in } L^2(0,T;{\boldsymbol H})$$
    and recalling the bound $\phi\in L^2(0,T;\H^2)\subset L^{2}(0,T;L^{\infty}(\Omega))$, which yields $\phi\nabla v\in L^2(0,T;(\H)^3)$. Concerning the former, we observe
    \begin{align*}
    \Big|\int_{t}^{t+h} \l [\phi_{\nu_{n}} - \phi]\u_{\nu_{n}}, \nabla v \r \, \d t\Big|&\leq
    \int_{t}^{t+h} \|\nabla v\| \| \u_{\nu_{n}}\| \|\phi_{\nu_{n}} - \phi\|_{L^\infty}\d t\\
    &\leq \| \nabla v \| \Big(\int_{t}^{t+h} \| \u_{\nu_{n}}\|^2 \,\d t\Big)^{1/2}
    \Big(\int_{t}^{t+h} \|\phi_{\nu_{n}} - \phi\|_{L^\infty}^2\,\d t\Big)^{1/2}.
    \end{align*}
    An application of  Theorem~\ref{lions} yields the compactness of  $\{ \phi_{\nu_{n}} \}$  in $L^{2}(0,T;L^{\infty}(\Omega))$, proving the required convergence.

        \smallskip
    \noindent Finally, let us consider the term involving the time derivative of $\phi$. In particular, recalling that $v$ is constant in time, we have
    $$ \int_{t}^{t+h} \partial_{t} \phi \, v \, \d t = (\phi(t+h) - \phi(t))v. $$
    Thanks to the boundedness of $\partial_{t} \phi$ in $L^{8/5}(0,T;\H^{-1})$, the Lebesgue Theorem also gives
    $$ \frac{\phi(t+h) - \phi(t)}{h} \to \partial_{t} \phi(t) \quad \text{ a.e.\ $t \in [0,T]$}.$$
    A repeated application of the Lebesgue Theorem implies that the couple $(\phi, \u)$ satisfies~\eqref{B1-w}-\eqref{B2-w} for almost every time $t \in [0,T]$.
    Moreover, observing that $\phi$ in $L^{\infty}(0,T;\H^1)$ and $\phi\in \C([0,T];\H^{-1})$, it follows that $\phi$ is also weakly continuous taking values in $\H^1$.

    \smallskip
    \noindent Finally we show that
    $$ \lim_{t \to 0} \l \phi(t), v \r = \l \phi_{0}, v \r, \quad \text{for all $v \in \H^{-1}$}.$$
    Let $\psi \colon [0,T] \to \mathbb{R}$ be a $\C^{\infty}$ function such that $\psi(0) = 1$ and $\psi(T) = 0$ and let $v \in \H^1$ be arbitrary. Multiplying~\eqref{B1-w} with $\nu > 0$ by $\psi v$ and integrating over $\Omega \times [0,T]$ we obtain
    $$-\int_{0}^{T} \l \phi_{\nu_{n}}, \psi v \r \, \d t + \int_{0}^{T} \l \phi_{\nu_{n}} \u_{\nu_{n}}, \psi \nabla v \r \d t+ \int_{0}^{T} \l \nabla \mu_{\nu_{n}}, \psi \nabla v \r \d t = \l \phi_{0}, v \r.$$
    As before, we can pass to the limit as $\nu_{n} \to 0$, so obtaining
    $$-\int_{0}^{T} \l \phi, \psi v \r \, \d t + \int_{0}^{T} \l \phi \u, \psi \nabla v \r \d t + \int_{0}^{T} \l \nabla \mu, \psi \nabla v \r \d t = \l \phi_{0}, v \r.$$
    Proceeding analogously  in the case $\nu = 0$, we deduce
    $$-\int_{0}^{T} \l \phi, \psi v \r \, \d t + \int_{0}^{T} \l \phi \u, \psi \nabla v \r \d t + \int_{0}^{T} \l \nabla \mu, \psi \nabla v \r \d t = \l \phi(0), v \r.$$
    Finally, a comparison between these last two equalities and the arbitrary choice of $v \in \H^1$ gives
    $\phi(0) = \phi_{0}.$ \qedhere
%%%%%%%%%%%%%%%%%%%%%%%

%%%%%%%%%%%%%%%%%%%%%%%
\section{The CHB system in dimension $N=2$ }
\label{CHB2D}
 In  this section, we analyze the closeness between the  solution to the CHB system and the solution  to the CHHS system which are
 originated from regular initial data in $\H^2$.

Before proving our main result, i.e., Theorem \ref{t:close}, we derive some
regularity estimates for the solutions of the CHB system in 2D which are
uniform with respect to $\nu\geq 0$.
Hence, from now on, let $\phi_0\in \H^2$ and denote by
$c\geq 0$ a generic constant which may depend on $\|\phi_0\|_2$ but is \emph{independent of $\nu$.}

\subsection{Higher-order bounds independent of $\nu$}
We shall exploit in a crucial way the following well-known inequalities which hold in dimension
two:
\begin{align}
\label{L4m}
&\|f\|_{L^4}^2\leq c(\|f\|\|\nabla f\|+\|f\|^2),\\
\label{L4}
&\|f\|_{L^4}^2\leq c\|f\|\|\nabla f\|,\quad \text{if } \l f \r=0,\\
\label{A2}
&\|f\|_{L^\infty}^2\leq c\|f\|\|f\|_{H^2}.
\end{align}

\begin{proposition}
Let $\nu\geq 0$ be fixed and let $\phi(t)=S_\nu(t)\phi_0$. Then, the following estimate holds
\begin{equation}
\label{puntualeH2_2d}
\|\phi(t)\|_2+\int_t^{t+1}\|\phi(y)\|_4^2\,\d y\leq c,\quad \forall t\geq 0.
\end{equation}
Furthermore, we have
\begin{equation}\label{uH1}
    \sup_{t\geq 0}\int_t^{t+1}(\|\mu(y)\|_2^2 )\,\d y\leq c.
\end{equation}

\end{proposition}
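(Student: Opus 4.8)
The goal is to prove uniform (in $\nu \geq 0$) $\H^2$-bounds for $\phi$ and the averaged $\H^2$-control of $\mu$ in dimension two, exploiting the two-dimensional inequalities \eqref{L4m}--\eqref{A2}. The key advantage over Proposition~\ref{t:pH2} is that in 2D the critical nonlinear terms can be absorbed \emph{without} paying a negative power of $\nu$, so that the resulting bounds survive the limit $\nu\to 0$.

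My plan is to reprove the estimates of Proposition~\ref{t:pH2}, but keeping careful track of $\nu$-independence. First I would revisit the Stokes identity \eqref{Stokes-id}, namely $\nu\|\nabla\u\|^2+\|\u\|^2=\l\mu\nabla\phi,\u\r$, and estimate the right-hand side using the two-dimensional inequalities rather than the Agmon/Sobolev chain used in 3D. Writing $\l\mu\nabla\phi,\u\r\leq\|\mu\|_{L^4}\|\nabla\phi\|_{L^4}\|\u\|$ and applying \eqref{L4m} to both $\mu$ and $\nabla\phi$, together with the already-established bounds $\|\phi\|_1\leq c$ and $\int_t^{t+1}\|\nabla\mu\|^2\,\d y\leq c$ from \eqref{e-base}, I expect to obtain a bound of the form
\begin{equation*}
\nu\|\nabla\u\|^2+\|\u\|^2\leq c(1+\|\Delta\phi\|^2)
\end{equation*}
that is \emph{independent of $\nu$}, which is the crucial structural improvement over \eqref{stimu1}.

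Next I would take $w=\Delta^2\phi$ in \eqref{B1-w} to produce the differential inequality
\begin{equation*}
\tfrac12\ddt\|\Delta\phi\|^2+\|\Delta^2\phi\|^2=\l\Delta f(\phi),\Delta^2\phi\r+\l\u\nabla\phi,\Delta^2\phi\r,
\end{equation*}
and estimate both right-hand terms in 2D. For the potential term I would reuse the expansion $\Delta f(\phi)=f''(\phi)|\nabla\phi|^2+f'(\phi)\Delta\phi$ from the proof of Proposition~\ref{t:pH2}, now controlling $\|\phi\|_{L^\infty}$ via \eqref{A2}, to get a bound like $\|\Delta f(\phi)\|^2\leq c(1+\|\Delta\phi\|^2)\|\Delta\phi\|^3$. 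For the convective term I would use $\l\u\nabla\phi,\Delta^2\phi\r\leq\|\u\|_{L^4}\|\nabla\phi\|_{L^4}\|\Delta^2\phi\|$, apply \eqref{L4m}/\eqref{L4} to the two $L^4$ factors, and insert the \emph{$\nu$-independent} velocity bound from the previous step; after a Young inequality to absorb $\tfrac12\|\Delta^2\phi\|^2$, this yields a differential inequality of the form
\begin{equation*}
\ddt\|\Delta\phi\|^2+\|\Delta^2\phi\|^2\leq c(1+\|\Delta\phi\|^2)^{\alpha}\|\Delta\phi\|^{\beta}
\end{equation*}
with $\nu$-independent constant. Since \eqref{E:l4h2} guarantees $\phi\in L^4(0,T;\H^2)$ uniformly, the uniform Gronwall Lemma~\ref{gen-gronw} delivers \eqref{puntualeH2_2d}, and the integrated form of the same inequality over $[t,t+1]$ gives the $L^2(t,t+1;\H^4)$ control. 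Finally, \eqref{uH1} follows by comparison: from $\mu=-\Delta\phi+f(\phi)$ and \eqref{GROW3}, elliptic regularity bounds $\|\mu\|_2$ by $\|\phi\|_4$ plus lower-order terms controlled by \eqref{puntualeH2_2d}, so integrating the $\H^4$-bound on $\phi$ over unit time intervals closes the estimate.

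The main obstacle I anticipate is arranging every absorption so that \emph{no} negative power of $\nu$ appears; this is precisely where the sharper 2D inequalities \eqref{L4m}--\eqref{A2} must replace the 3D Agmon/Sobolev estimates, and where the $\nu$-independent velocity bound $\|\u\|^2\leq c(1+\|\Delta\phi\|^2)$ is indispensable for controlling the convective term. The delicate point is that in 2D the $L^4$-norm of a gradient costs only \emph{half} a derivative (via \eqref{L4m}), so the worst term $\l\u\nabla\phi,\Delta^2\phi\r$ can be made subcritical with respect to $\|\Delta^2\phi\|$, allowing the Young absorption with a constant depending only on $\|\phi_0\|_2$. Verifying that the exponents in the resulting Gronwall inequality are compatible with the integrability \eqref{E:l4h2} is the only quantitative check that requires care.
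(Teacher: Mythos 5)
Your overall architecture matches the paper's (a velocity bound from the Stokes identity \eqref{Stokes-id}, then testing \eqref{B1-w} with $\Delta^2\phi$, uniform Gronwall via \eqref{E:l4h2}, and finally \eqref{uH1} by comparison from $\mu=-\Delta\phi+f(\phi)$), but there are two genuine gaps in the way you propose to carry it out.

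First, the claimed velocity bound $\nu\|\nabla\u\|^2+\|\u\|^2\leq c(1+\|\Delta\phi\|^2)$ does not follow pointwise in time. Estimating $\|\mu\|_{L^4}^2\leq c(\|\mu\|\,\|\nabla\mu\|+\|\mu\|^2)$ via \eqref{L4m} necessarily leaves a factor $\|\nabla\mu\|\leq c(1+\|\nabla\Delta\phi\|)$, and the energy estimate \eqref{e-base} only controls $\int_t^{t+1}\|\nabla\mu\|^2$, not $\sup_t\|\nabla\mu\|$; you cannot discard this factor. The correct conclusion is $\|\u\|^2\leq c(1+\|\Delta\phi\|^2)\|\nabla\Delta\phi\|$, and the extra $\|\nabla\Delta\phi\|$ must be carried all the way into the Gronwall coefficient $g(t)$, whose unit-interval integrability then rests on \eqref{L2-H3} as well as \eqref{E:l4h2}. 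This is repairable bookkeeping, but as stated your differential inequality has the wrong coefficient.

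Second, and more seriously, your treatment of the convective term fails. Splitting $\l\u\cdot\nabla\phi,\Delta^2\phi\r\leq\|\u\|_{L^4}\|\nabla\phi\|_{L^4}\|\Delta^2\phi\|$ and applying \eqref{L4m} to $\u$ produces $\|\u\|_{L^4}^2\leq c(\|\u\|\,\|\nabla\u\|+\|\u\|^2)$, which reintroduces $\|\nabla\u\|$; the only available control on $\|\nabla\u\|$ comes from the term $\nu\|\nabla\u\|^2$ in \eqref{Stokes-id} and therefore costs a negative power of $\nu$ --- exactly the degeneration the whole proposition is designed to avoid (and there is no $\nu$-uniform $\HD$-bound on $\u$ at this stage; such a bound is only obtained afterwards, for the limit system, via Remark~\ref{PM_forcing_higher_est}). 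The paper instead estimates $\|\u\cdot\nabla\phi\|\leq\|\u\|\,\|\nabla\phi\|_{L^\infty}$ and pays for the sup-norm with the two-dimensional Agmon inequality \eqref{A2} applied to $\nabla\phi$, i.e. $\|\nabla\phi\|_{L^\infty}^2\leq c\|\nabla\phi\|\,\|\nabla\Delta\phi\|$, so that only the $L^2$-norm of $\u$ --- for which the $\nu$-independent bound is available --- ever appears. You should also note that the uniform Gronwall lemma only covers $t\geq1$; the range $t\in[0,1]$ requires a separate standard Gronwall argument using $\phi_0\in\H^2$, which your sketch omits.
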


\begin{proof}
On account of \eqref{Stokes-id} we find
$$\nu\|\nabla\u\|^2+\|\u\|^2=\l \mu\nabla\phi,\u\r\leq \frac12 \|\u\|^2+\frac12\|\mu\nabla\phi\|^2,$$
which yields
\begin{equation*}
\|\u\|^2\leq \|\mu\nabla\phi\|^2.
\end{equation*}
Besides, by \eqref{L4m} and \eqref{L4} we get
$$
\|\mu\nabla\phi\|^2\leq \|\mu\|_{L^4}^2\|\nabla\phi\|_{L^4}^2
\leq c (\|\mu\| \|\nabla \mu\|+\|\mu\|^2) \|\nabla\phi\| \|\Delta\phi\|.
$$
Since standard computations in light of \eqref{GROW} and \eqref{e1} yield
\begin{align*}
\|\mu\|&\leq \|\Delta \phi\|+\|f(\phi)\|\leq c(1+\|\Delta \phi\|),\\
\|\nabla\mu\|&\leq \|\nabla\Delta \phi\|+\|\nabla f(\phi)\|\leq c(1+\|\nabla\Delta \phi\|),
\end{align*}
we end up with
\begin{equation}
\label{uf}
\|\u\|^2\leq c(1+\|\Delta \phi\|^2) \|\nabla\Delta\phi\|.
\end{equation}
By taking $w=\Delta^2\phi$ in \eqref{B1-w} we obtain
$$
\frac12\ddt \|\Delta\phi\|^2+\|\Delta^2 \phi\|^2=\l\Delta f(\phi),\Delta^2\phi\r+\l\u\cdot\nabla\phi,\Delta^2\phi\r.
$$
We estimate the first term on the right hand side as follows
\begin{align*}
 \l\Delta f(\phi),\Delta^2\phi\r
\leq \frac{1}{4}\|\Delta^2\phi\|^2+ c\|\Delta f(\phi)\|^2
\leq \frac{1}{4}\|\Delta^2\phi\|^2+c(1+\|\Delta \phi\|^2)\|\Delta\phi\|^2,
\end{align*}
where we exploit the 2D analog of \eqref{Delta_f} to control $\|\Delta f(\phi)\|$.
Then we handle the remaining term as
\begin{align*}
\l\u\cdot\nabla\phi,\Delta^2\phi\r\leq c\|\u\cdot\nabla\phi\|\|\Delta^2\phi\|
&\leq \frac{1}{4}\|\Delta^2\phi\|+ c\|\u\cdot\nabla\phi\|^2.
\end{align*}
Owing to \eqref{uf} and the Agmon inequality \eqref{A2}, we infer
\begin{align*}
&\|\u\cdot\nabla\phi\|^2\leq \|\u\|^2\|\nabla\phi\|_{L^\infty}^2
\leq \|\u\|^2 \|\nabla\phi\| \|\nabla\Delta\phi\|
\leq c(1+\|\Delta \phi\|^2)\|\nabla\Delta\phi\|^2\\
&\quad\leq  c\|\Delta \phi\|^2\|\nabla\Delta\phi\|^2+c\|\Delta\phi\|\|\Delta^2\phi\|
\leq  c(1+\|\nabla\Delta\phi\|^2)\|\Delta \phi\|^2+\frac14\|\Delta^2\phi\|^2.
\end{align*}
Thus we obtain the differential inequality
\begin{equation}
\label{dif_fi2}
\frac12\ddt \|\Delta\phi\|^2+\frac12\|\Delta^2 \phi\|^2\leq
 g(t)\|\Delta\phi\|^2,
\end{equation}
where, in light of \eqref{L2-H3},
$g(t):=c(1+\|\Delta \phi(t)\|^2+\|\nabla\Delta\phi(t)\|^2)$
satisfies
$$\sup_{t\geq 0}\int_t^{t+1}g(y)\,\d y\leq c.$$
We can thus apply Lemma \ref{gen-gronw}, so obtaining
$$\|\Delta\phi(t)\|^2\leq c,\quad \forall t\geq 1.$$
In order to prove the required estimate for $t\in [0,1]$ it is sufficient to apply the usual Gronwall lemma on $[0,t]$ to the inequality
$$\ddt \|\Delta\phi\|^2\leq
 2g(t)\|\Delta\phi\|^2.$$
Indeed this yields
$$\|\Delta\phi(t)\|^2\leq \|\Delta\phi(0)\|^2\e^{2G(t)},$$
where
$$G(t)=\int_0^t g(y)\,\d y \leq \int_0^1 g(y)\,\d y\leq c,\quad \forall t\in[0,1].$$
Hence we have
$$\|\Delta\phi(t)\|^2\leq c,\quad \forall t\in[0,1].$$
On account of this bound, a final integration of \eqref{dif_fi2} on $[t,t+1]$ concludes the proof of~\eqref{puntualeH2_2d}. In order to show the validity of \eqref{uH1},
note that, by estimating again $\|\Delta f(\phi)\|$ as in \eqref{Delta_f}, we get
\begin{align*}
\|\mu\|_2^2&\leq c(\|\mu\|^2+\|\Delta\mu\|^2)\\&\leq c(\|f(\phi)\|^2+\|\Delta\phi\|^2+\|\Delta f(\phi)\|^2+\|\Delta^2\phi\|^2)\\
&\leq c(1+\|\Delta \phi\|^2+\|\Delta \phi\|^4+\|\Delta^2 \phi\|^2),
\end{align*}
which, in light of \eqref{puntualeH2_2d}, implies the integrability of $\mu$.
\end{proof}

\begin{remark}\label{PM_forcing_higher_est}
    The following estimate also holds uniformly in $\nu \geq 0$. Exploiting the Agmon inequality and the uniform $\H^2$-estimate for $\phi$, we have
    \begin{align*}
    \| \mu \nabla \phi \|_1^2& \leq \|\mu\nabla\phi\|^2+\|\nabla\mu\nabla\phi\|^2+\|\mu\nabla^2\phi\|^2\\
    &\leq \|\mu\|_{L^\infty}^2\|\nabla\phi\|^2+\|\nabla\mu\|_{L^4}^2\|\nabla\phi\|_{L^4}^2+c\|\mu\|_{L^\infty}^2\|\Delta\phi\|^2\\
    &\leq c(1+\|\mu\|_2^2).
    \end{align*}
    In particular we deduce $\mu \nabla \phi \in L^{2}(t,t+1;\H^1)$ uniformly for $t \geq 0$ and $\nu \geq 0$.
\end{remark}

%%%%%%%%%%%%%%%%%%%%%%%%%%%%%%%%

%%%%%%%%%%%%%%%%%%%%%%%%%%%%%%%%

\subsection{Proof of Theorem \ref{t:close}}

\begin{proof}
Let $\phi_0^\nu,\phi_0\in\H^2$ such that $\l\phi_0^\nu\r=\l\phi_0\r$.
Then denote by $c$ a generic positive constant depending on $R$, where
$R:=\sup_{\nu> 0}\{\|\phi_0^\nu\|_2,\|\phi_0\|_2\}<\infty.$
Let $(\phi_\nu,\u_{\nu})$ be the weak solution to the CHB system with $\nu>0$ originating from $\phi_0^{\nu}$, and $(\phi, \u)$ the solution to the CHHS system with initial datum $\phi_0$.
Note that the difference  $\bar \phi=\phi_\nu-\phi$, $\bar\u =\u_\nu-\u$ is a weak solution to
\begin{align}
\label{D1_2d}
&\partial_t \bar\phi+\nabla\cdot (\phi_\nu
\bar\u)+\nabla\cdot (\bar \phi \u)-\Delta \bar\mu=0,\\
\label{D2_2d}
&\bar\u=\nabla \bar p-\phi_\nu \nabla \bar\mu-
 \bar\phi \nabla \mu+ \nu \Delta \u_\nu,\\
\label{Ddiv_2d}
&\nabla \cdot   \bar\u=0,
\end{align}
where
\begin{equation*}\label{Dmu_2d}
    \bar \mu =-\Delta \bar\phi+[f(\phi_\nu)-f(\phi)],
\end{equation*}
and $\l\bar\phi\r=0$.

\medskip
\noindent
Taking  $-\Delta \bar\phi$ as test function in the weak formulation of \eqref{D1_2d}, we obtain
$$\ddt \frac12 \|\nabla \bar\phi\|^2+\l \phi_\nu\bar\u,\nabla\Delta \bar\phi\r
+\l \bar\phi\u,\nabla\Delta \bar\phi\r+\l \nabla \bar\mu,\nabla\Delta \bar\phi\r=0.$$
On the other hand, we have
$$\l \nabla \bar\mu,\nabla\Delta \bar\phi\r=-\|\nabla\Delta \bar\phi\|^2+
\l \nabla[f(\phi_\nu)-f(\phi)],\nabla\Delta \bar\phi\r,$$
so that
\begin{equation}
\label{uno_2d}\ddt \frac12 \|\nabla \bar\phi\|^2+\|\nabla\Delta \bar\phi\|^2=-\l \phi_\nu\bar\u,\nabla\Delta \bar\phi\r -\l \bar\phi\u,\nabla\Delta \bar\phi\r+
\l \nabla[f(\phi_\nu)-f(\phi)],\nabla\Delta \bar\phi\r.
\end{equation}
Let us now take $\bar \u$ in the weak formulation of \eqref{D2_2d}. Adding $-\nu \l \nabla \u, \nabla \bar\u \r$ to both sides of the resulting identity, we get
\begin{equation}
\label{due}
\nu \| \nabla \bar \u \|^2 + \|\bar \u\|^2=-\l \phi_\nu\nabla\bar\mu,\bar\u\r-\l \bar \phi\nabla\mu,\bar\u\r
-\nu\l \nabla \u,\nabla \bar\u\r.
\end{equation}
Note that, by definition of $\bar\mu$, there holds
$$-\l \phi_\nu\nabla\bar\mu,\bar\u\r=\l \phi_\nu\nabla\Delta\bar\phi,\bar\u\r-
\l \phi_\nu\nabla[f(\phi_\nu)-f(\phi)],\bar\u\r.$$
Hence, adding  \eqref{uno_2d} with \eqref{due}
we end up with
\begin{align*}
&\ddt \frac12 \|\nabla \bar\phi\|^2+\|\nabla\Delta \bar\phi\|^2 + \nu \| \nabla \bar \u \|^2 + \|\bar \u\|^2=-\nu\l \nabla\u,\nabla \bar\u\r\\
&\quad -\l \bar\phi\u,\nabla\Delta \bar\phi\r - \l \bar \phi\nabla\mu,\bar\u\r - \l \phi_\nu\nabla[f(\phi_\nu)-f(\phi)],\bar\u\r+\l \nabla[f(\phi_\nu)-f(\phi)],\nabla\Delta \bar\phi\r.
\end{align*}
We now estimate the terms on the right hand side.
First of all, we have
\begin{align*}
-\l \bar\phi\u,\nabla\Delta \bar\phi\r \leq c \|\bar\phi\|_1 \|\u\|_{1} \|\nabla\Delta \bar\phi\|
\leq \frac{1}{4}\|\nabla\Delta \bar\phi\|^2 + c \|\u\|_{1}^2\|\bar\phi\|_1^2.
\end{align*}
Besides, the following inequality holds
\begin{align*}
-\l \bar \phi\nabla\mu,\bar\u\r \leq \|\bar\phi\|_1 \|\bar{\u}\| \|\nabla \mu\|_{L^3}
\leq \frac{1}{2} \|\bar\u\|^{2} + c\|\Delta \mu\|^2\|\bar\phi\|_1^2.
\end{align*}

\smallskip
\noindent
We are left to deal with the term
\begin{align*}
\l \nabla[f(\phi_\nu)-f(\phi)],\nabla\Delta \bar\phi\r&\leq \|\nabla[f(\phi_\nu)-f(\phi)]\|\|\nabla\Delta \bar\phi\|\\&\leq \frac{1}{4}\|\nabla\Delta \bar\phi\|^2+c\|\nabla[f(\phi_\nu)-f(\phi)]\|^2,
\end{align*}
where
$$\|\nabla[f(\phi_\nu)-f(\phi)]\|^2\leq\|[f'(\phi_\nu)-f'(\phi)]\nabla \phi_\nu\|^2+ \|f'(\phi)\nabla\bar\phi\|^2.$$
By exploiting the uniform $\H^2$-estimates both for $\phi_\nu$ and $\phi$ obtained in \eqref{puntualeH2_2d} and condition \eqref{GROW2}, we have
\begin{align*}
\|f'(\phi)\nabla\bar\phi\|^{2}&=\int_\Omega |f'(\phi)\nabla\bar\phi|^{2}
\leq \|f'(\phi)\|_{L^\infty}^2\|\nabla\bar\phi\|^{2}
\leq c(1+\|\phi\|_{L^\infty}^4)\|\nabla\bar\phi\|^{2}
\leq c \|\bar\phi\|_1^2,
\end{align*}
and, analogously,
\begin{align*}
\|[f'(\phi_\nu)-f'(\phi)]\nabla \phi_\nu \|^{2}&\leq c\int_\Omega |(1+|\phi_\nu|+|\phi|)\bar\phi\nabla\phi_\nu|^{2}
\leq c\|\bar\phi\|_{L^4}^2\|\nabla \phi_\nu\|_{L^4}^2
\leq c \|\bar\phi\|_1^2.
\end{align*}
Thus we have the control
$$\|\nabla[f(\phi_\nu)-f(\phi)]\|^2\leq c \|\bar\phi\|_1^2.$$
Using again \eqref{puntualeH2_2d}, the remaining term involving $f$ can be treated in the following way:
\begin{align*}
-\l \phi_\nu\nabla[f(\phi_\nu)-f(\phi)],\bar\u\r&\leq \|\phi_\nu\|_{L^\infty} \|\nabla[f(\phi_\nu)-f(\phi)]\| \|\bar \u\|\\
&\leq \frac{1}{4} \|\bar{\u}\|^2+ c\|\nabla[f(\phi_\nu)-f(\phi)]\|^2
\leq \frac{1}{4} \|\bar{\u}\|^2+c \|\bar\phi\|_1^2.
\end{align*}
In addition, we have
$$\nu|\l \nabla\u,\nabla \bar\u\r|\leq \nu \| \nabla \u \|^2 + \nu | \l \nabla \u, \nabla \u_{\nu} \r | \leq \nu \| \nabla \u \|^2 + \nu^{1/2} ( \nu \| \nabla \u_{\nu} \|^2 + \| \nabla \u \|^2 ).$$
Thanks to~\cite[Lemma~2.1]{LTZ}, Remark~\ref{PM_forcing_higher_est} implies $\u \in L^2 (t,t+1;\HD)$ for all $t \geq 0$. Moreover, recalling \eqref{e-base}, it holds
$$k(\cdot):= \nu \| \nabla \u \|^2 + (1 + \nu^{1/2}) \| \nabla \u \|^2 \in L^1(0,T),$$
for every $T>0$, uniformly with respect to $\nu\geq 0$.

\smallskip
\noindent
Collecting all the above inequalities, we end up with
$$\ddt  \|\bar\phi\|_1^2 + \frac{1}{4}\|\bar{\u}\|^{2} \leq h(t)\|\bar\phi\|_1^2+\nu^{1/2} k(t),$$
where $h(t)=c(1+\|\Delta \mu(t)\|^2+\|\u(t)\|_{1}^2).$
Thanks again to~\cite[Lemma~2.1]{LTZ} and Remark~\ref{PM_forcing_higher_est}, $h\in L^1(0,T)$ uniformly with respect to $\nu\geq 0$.
Therefore, an application of the Gronwall lemma provides, for all $t\in [0,T]$,
$$\|\phi_\nu(t)-\phi(t)\|_1^2\leq
\|\phi_0^\nu-\phi_0\|_1^2 \e^{\int_0^t h(y)\,\d y}+ \nu^{1/2} \int_0^t k(y)\,\d y,$$
which, in particular, entails that
$$\|\phi_\nu(t)-\phi(t)\|_1^2\leq
\|\phi_0^\nu-\phi_0\|_1^2\e^{C_T} + \nu^{1/2} \, C_T,$$
having set $C_T=\max\{\int_0^T h(y)\,\d y ,\, \int_0^T k(y)\,\d y\}<\infty$.
Integrating the differential inequality on $[0,t]$, $t\leq T$, up to enlarging $C_T$ we also obtain
\begin{equation*}
\int_{0}^{t} \|\u_\nu-\u \|^{2} \leq \|\phi_0^\nu-\phi_0\|_1^2\e^{C_T}+ \nu^{1/2}\, C_T. \qedhere
\end{equation*}
\end{proof}

\begin{remark}
Since we are dealing with solutions which are uniformly bounded in $\H^2$, the convergence of $\phi_\nu$ to $\phi$ in $\H^{2-\delta}$, for every $\delta>0$, easily follows. On the contrary, proving the convergence in $\H^2$ seems to be out of reach, due to the fact that the semigroup associated to the solutions of the CHHS equation on $\H^2$ is not strongly continuous but just closed, with a continuous dependence estimate with respect to the $\H^1$-norms, see \cite[(6.13)]{LTZ}.
\end{remark}
%%%%%%%%%%%%%%%%%%%%%%%%%%%%%%%%

{\bf Acknowledgments.} The authors are grateful to the referees for their helpful comments and suggestions. The three authors are members of the Gruppo Nazionale per l'Analisi Matematica, la Probabilit\`{a} e le loro Applicazioni (GNAMPA) of the Istituto Nazionale di Alta Matematica (INdAM).

%%%%%%%%%%%%%%%%%%%%%%%%%%%%%%%%

\end{document}